\theoremstyle{plain} 
\newtheorem{theorem}{\indent\sc Theorem}[section] 
\newtheorem{lemma}[theorem]{\indent\sc Lemma}
\newtheorem{corollary}[theorem]{\indent\sc Corollary}
\theoremstyle{definition} 
\newtheorem{definition}[theorem]{\indent\sc Definition}
\newtheorem{remark}[theorem]{\indent\sc Remark}
\makeatletter \@addtoreset{equation}{section} \makeatother
\newcommand{\R}{\mathbb{R}}
\DeclareMathOperator{\Ric}{Ric}
\renewcommand{\d}{\mathrm{d}}
\newcommand{\m}{\mathfrak{m} }
\newcommand{\s}{\mathfrak{s}}
\newcommand{\eps}{{\varepsilon}}
\begin{document}

\title[Laplacian comparison theorem on Riemannian manifolds]{Laplacian comparison theorem on Riemannian manifolds  with modified \boldmath$m$-Bakry-Emery Ricci lower bounds
for \boldmath$m\leq1$} 

\author[K. Kuwae]{Kazuhiro Kuwae$^*$} 

\author[T. Shukuri]{Toshiki Shukuri} 


\renewcommand{\thefootnote}{\fnsymbol{footnote}}
\footnote[0]{2020\textit{ Mathematics Subject Classification}.
 Primary 53C20; Secondary 53C21, 53C22, 53C23, 53C24, 58J60.}
\keywords{ 
modified $m$-Bakry-\'Emery Ricci curvature, Laplacian comparison theorem, weighted Myers' theorem, 
Bishop-Gromov volume comparison theorem, 
Ambrose-Myers' theorem, Cheng's maximal diameter theorem,     
Cheeger-Gromoll splitting theorem. 
}
\thanks{ 
$^*$Supported in part by JSPS Grant-in-Aid for Scientific Research (KAKENHI) 17H02846 and by fund (No.:185001) from the Central Research Institute of Fukuoka University.
}
\address{
Department of Applied Mathematics \endgraf
Fukuoka University \endgraf
Fukuoka 814-0180 \endgraf
Japan
}
\email{kuwae@fukukoa-u.ac.jp}

\address{
Oita City Takio Junior High School \endgraf
Oita, 874-0942 \endgraf
Japan
}
\email{lo.5.hawks61@docomo.ne.jp}

\maketitle

\begin{abstract}
In this paper, we prove a Laplacian comparison theorem  
for non-symmetric diffusion operator on complete smooth $n$-dimensional Riemannian manifold having a lower bound of modified $m$-Bakry-\'Emery Ricci tensor 
under $m\leq 1$ in terms of vector fields. 
As consequences, we give the optimal conditions for modified $m$-Bakry-\'Emery Ricci 
tensor under $m\leq1$ such that the 
(weighted) Myers' theorem, 
Bishop-Gromov volume comparison theorem,  
Ambrose-Myers' theorem, 
 Cheng's maximal diameter theorem, and the Cheeger-Gromoll type splitting theorem hold.  
Some of these results were well-studied for 
$m$-Bakry-\'Emery Ricci curvature under $m\geq n$ (\cite{Xdli:Liouville, Lot, Qian, WeiWylie}) or $m=1$ (\cite{Wylie:WarpedSplitting, WylieYeroshkin}) if the vector field is a gradient type. When $m<1$, our results are new in the literature. 
\end{abstract}

\section{Introduction} 

\subsection{Modified Bakry-\'Emery Ricci curvatures}
\ \ Let $(M,g)$ be an $n$-dimensional smooth complete Riemannian manifold with its volume measure  $\m:=\text{\rm vol}_g$  and 
$V$ a $C^1$-vector field.  
Throughout this paper, we assume that the manifold $M$ has no boundary and is connected. 
We consider a diffusion operator $\Delta_V:=\Delta- \langle V, \nabla\cdot\rangle $.  In \cite{Tadano,TadanoNegative,Wylie:WarpedSplitting},  $\Delta_V$ is called the \emph{$V$-Laplacian} on $(M, g)$. 

For any constant $m\in]-\infty,+\infty]$, we introduce the symmetric $2$-tensor 
\begin{align*}
{\Ric}_{m,n}(\Delta_V)(x)={\Ric}(x)+\frac12\mathcal{L}_Vg(x)-\frac{\;V^*(x)\otimes V^*(x)\;}{m-n},\quad x\in M,
\end{align*} 
and call it the \emph{modified $m$-Bakry-\'Emery Ricci  tensor} of the diffusion operator 
$\Delta_V$. Here $\mathcal{L}_Vg(X,Y):=\langle \nabla_XV,Y\rangle +\langle \nabla_YV,X\rangle $ is the Lie derivative of $g$ with respect to $V$ and $V^*$ is the dual $1$-form of $V$ coming from $g$.

For any $m\in]-\infty,+\infty]$ and a continuous function $K:M\to\mathbb{R}$, we call $(M, g, V)$ or $L$ satisfies the ${\rm CD}(K, m)$-condition  if 
\begin{align*}
{\rm Ric}_{m, n}(\Delta_V)(x)\geq K(x)\quad \text{ for all }\quad x\in M.
\end{align*} 
When $m=n$, we always assume that $V$ vanishes so that 
${\Ric}_{n,n}(\Delta_V)={\Ric}$. 
When $m\geq n$, $m$ is regarded as an upper bound for the dimension of the diffusion operator $\Delta_V$. 
Throughout this paper, we focus on the case $m\leq1$ and assume $n>1$ if $m=1$ and $V$ does not vanish (i.e., $V\equiv0$ and $\Delta_V=\Delta$ if $m=n=1$). 
Consequently, for $m\leq1$, we always assume $n>m$ provided $V$ does not vanish. 
Note that, for $m\leq1$, $N\in[n,+\infty[$, and for any $x\in M$,  we have 
\begin{align*}
{\Ric}_{1,n}(\Delta_V)(x)\geq {\Ric}_{m,n}(\Delta_V)(x)\geq 
{\Ric}_{\infty,n}(\Delta_V)(x)\geq {\Ric}_{N,n}(\Delta_V)(x).
\end{align*}
If we only consider the case that the lower bounds of the above Ricci tensor are constant, ${\Ric}_{1,n}(\Delta_V)\geq {\rm const}.$ is the weakest one among them. But if we consider the case that the lower bound of Ricci curvature depends on the parameter $m$ like \eqref{eq:RiciLowBddStrong} below, the similar condition is no longer the weakest one.  

In the literature, there have been intensive works on the study of geometry and analysis of weighted complete Riemannian manifolds 
with the CD$(K, m)$-condition for $m\geq n$ and $K\in \mathbb{R}$ 
(or $K\in C(M, \mathbb{R})$) in the case 
$V=\nabla\phi$ for $\phi\in C^2(M)$. See 
\cite{AN,BakryLect1581,BGL_book,BE1,BL,
BQ1,BQ2,FanLiZhang,FLL,Xdli:Liouville, Li12,LL15,LL17,Lot,Qian,WeiWylie}, 
and reference therein. During recent years,  there are already several papers on the study of weighted Riemannian manifolds with $m$-Bakry-\'Emery Ricci curvature for 
 $m<0$ or $m<1$ with $V=\nabla\phi$ for a $C^2$-function $\phi$. For $V=\nabla\phi$, we write $L:=\Delta_{\nabla\phi}$ 
in this introduction.  
 In ~\cite{OhtaTaka},   Ohta and Takatsu proved  the  $K$-displacement convexity of the R\'enyi type 
 entropy under the  $m$-Bakry-\'Emery Ricci tensor condition ${\rm Ric}_{m, n}(L)\geq K$,  i.e., the ${\rm CD}(K, m)$-condition, for $m\in]\!-\infty,0\,[\,\cup\,[\,n,+\infty\,[$ and $K\in \mathbb{R}$.  After that,  Ohta~\cite{Ohta:KN} and Kolesnikov-Milman~\cite{KolesMilman} simultaneously treated 
the case $m<0$.  Ohta~\cite{Ohta:KN} extended the Bochner inequality, eigenvalue estimates, and the Brunn-Minkowski inequality under the lower bound for 
${\Ric}_{m,n}(L)$ with $m<0$. 
Kolesnikov-Milman~\cite{KolesMilman} also proved the Poincar\'e and the Brunn-Minkowski inequalities for manifolds with boundary 
under the lower bound for 
${\Ric}_{m,n}(L)$ with $m<0$. 
In \cite[Theorem~4.10]{Ohta:KN}, Ohta also proved that the lower bound of ${\Ric}_{m,n}(L)(x)$ with 
 $m<0$ is equivalent to the curvature dimension condition in terms of 
 mass transport theory as defined by Lott-Villani~\cite{LV2} and Sturm~\cite{St:geomI, St:geomII}.  In ~\cite{Wylie:WarpedSplitting}, Wylie  proved a warped product version of Cheeger-Gromoll splitting theorem under the CD$(0, 1)$-condition. He also proved an isometric product version of Cheeger-Gromoll splitting under CD$(0, m)$-condition with $m<1$ and $(V,1)$-completeness condition. 
  In ~\cite{WylieYeroshkin}, 
W.~Wylie and D.~Yeroshkin proved a Laplacian comparison theorem, a Bishop-Gromov volume comparison theorem,  Myers' theorem and Cheng's maximal diameter theorem on manifolds with 
$m$-Bakry-\'Emery Ricci curvature condition for $m=1$ with  $V=\nabla \phi$ for a $C^2$-function $\phi$.  Recently,  Milman~\cite{Milman17} extended the Heintze-Karcher Theorem, isoperimetric inequality, and functional inequalities under the lower bound for ${\Ric}_{m,n}(L)(x)$ with $m < 1$. 
In \cite{KL}, the first named author and X.-D.~Li established 
the Laplacian comparison theorem on weighted complete Riemannian manifolds with the ${\rm CD}(K, m)$-condition with $m\leq 1$ for 
$V=\nabla\phi$ with $\phi\in C^2(M)$, and obtained (weighted) Myers' theorem, Bishop-Gromov volume comparison theorem, Ambrose-Myers' theorem, Cheeger-Gromoll type splitting theorem, stochastic completeness and Feller property of $L$-diffusion process under optimal conditions on the $m$-Bakry-\'Emery Ricci 
tensor for $m\leq1$ 
over the weighted complete Riemannian manifolds.

It is important to know whether one can establish the Laplacian comparison theorem on such Riemannian manifolds with the ${\rm CD}(K, m)$-condition for $m\leq 1$ and $K\in \mathbb{R}$ for general $C^1$-vector field $V$. In this paper, we prove such comparison theorem for $K$ being a continuous function depending on a re-parametrized distance 
 function on $M$. As consequences, we give the optimal conditions on the modified $m$-Bakry-\'Emery Ricci 
tensor for $m\leq1$ so that (weighted) Myers' theorem, Bishop-Gromov volume comparison theorem, 
Ambrose-Myers' theorem, Cheng's maximal diameter theorem, 
 and the Cheeger-Gromoll type splitting theorem hold on weighted complete Riemannian manifolds. These geometric results are complete extensions of the case for $V=\nabla \phi$ proved in the first part of \cite{KL}.  When $m<1$, our results are new in the literature.  

\par

\bigskip

\noindent
\emph{Acknowledgment.} The authors would 
thank to 
Dr.~Yohei Sakurai for his significant comments to the 
draft of this paper.  
They also would like to thank to 
the anonymous referee. 
His/Her comments help to improve the quality 
of this paper very much.


\section{Main result}
Let $V$ be a $C^1$-vector field on a Riemannian manifold $(M,g)$. Since there  may be no function $\phi$ satisfying $V=\nabla\phi$ in general, we can still make sense of bounds by integrating $V$ along geodesics.   
Define 
\begin{align*}
V_{\gamma}(r):&=\int_0^r\langle V_{\gamma_s},\dot{\gamma}_s\rangle \d s
\end{align*}
for a unit speed geodesic $\gamma:[0,T[\to M$, 
and 
\begin{align}
\phi_V(x):&=\inf\left\{\left.
\int_0^{r_p(x)}\langle V_{\gamma_s},\dot{\gamma}_s\rangle \d s
\;\;\right|\!\!\!\!\! \left.\begin{array}{ll}&\gamma: \text{unit speed geodesic}\\ & \gamma_0=p, \gamma(r_p(x))=x\end{array}\right.\right\}.\label{eq:ModifiedPhi}
\end{align} 
Note that $V_{\gamma}$ depends on the choice of unit speed geodesic $\gamma$, and 
$\phi_V(x)$ depends on $p$ with $\phi_V(p)=0$ and it is well-defined for $x\in M$. 
It is easy to see that $\phi_V(x)=\int_0^{r_p(x)}Vr_p(\gamma_s)\d s$ under $x\notin {\rm Cut}(p)$, where 
$\gamma$ is the unique unit speed geodesic with $\gamma_0=p$ and $\gamma(r_p(x))=x$. 
Hence $\phi_V$ is a continuous function on $({\rm Cut}(p)\cup\{p\})^c$.  
Consequently, $\phi_V$ is an $\m$-measurable function. 
Moreover, for $x\notin {\rm Cut}(p)$, $\phi_V(x)=V_{\gamma}(r_p(x))$ for the unique unit speed geodesic $\gamma$ with $\gamma_0=p$ and $\gamma(r_p(x))=x$. Hence $\phi_V(\gamma_t)=V_{\gamma}(t)$ for any unit speed geodesic $\gamma$ with $\gamma_0=p$ and $\gamma_t\notin {\rm Cut}(p)$.
When $V=\nabla\phi$ is a gradient vector field for some $\phi\in C^2(M)$, then one can see 
\begin{align*}
V_{\gamma}(t)&=\int_0^t\langle \nabla\phi,\dot{\gamma}_s\rangle \d s
=\int_0^t\frac{\d}{\d s}\phi(\gamma_s)\d s=\phi(\gamma_t)-\phi(\gamma_0).
\end{align*}
Throughout this paper, 
we fix a point $p\in M$ and a constant $C_p>0$, which may depend on $p$. 
For $x\in M$, we define
\begin{align*}
s_p(x):=\inf\left\{\left. C_p 
\int_0^{r_p(x)}e^{-\frac{2V_{\gamma}(t)}{n-m}}\d t\;\;\right|\left.\begin{array}{ll}&\!\!\!\!\gamma: \text{unit speed geodesic}\\ &\!\!\!\! \gamma_0=p, \gamma(r_p(x))=x\end{array}\right.\right\}.
\end{align*}

If $(M,g)$ is complete, then  $s_p(x)$ is finite  and 
well-defined from the basic properties of Riemannian geodesics. 
Let  $s(p,q):=s_p(q)$ for $p,q\in M$. If $q$ is not a cut point of $p$, then there is  a unique minimal geodesic from $p$ to $q$ and $s_p$ is smooth in a neighborhood of $q$ as 
can be computed by pulling the function back by the exponential map 
at $p$. Note that $s(p,q)\geq0$, it is zero if and only if $p=q$.  But $s(p,q)=s(q,p)$ does not hold in general. 
If $V=\nabla\phi$ for some $\phi\in C^2(M)$ and set 
$C_p=\exp\left(-\frac{\;2\phi(p)\;}{n-m} \right)$ for the definition of $s_p(x)$ with $p$ being arbitrary, then one can see that  
$s(p,q)=s(q,p)$ for $p,q\in M$.  
However, 
$s(p,q)$ does not necessarily satisfy the triangle inequality.

\begin{definition}\label{df:phicompletness}
{\rm Let $(M,g)$ be an $n$-dimensional complete Riemannian manifold and $V$ a $C^1$-vector field. Fix $p\in M$.  Then we say that $(M,g, V)$ is \emph{$(V,m)$-complete at $p$} if 
\begin{align}
\varlimsup_{r\to+\infty}\inf_{L(\gamma)=r}\int_0^re^{-\frac{\;2V_{\gamma}(t)\;}{n-m}}\d t=+\infty,\label{eq:phimcomplete}
\end{align}
where the infimum is taken over all minimizing unit speed geodesics $\gamma$ with respect to the metric $g$ such that $\gamma_0=p$. We say that  $(M,g,V)$ is \emph{$(V,m)$-complete} if it is $(V,m)$-complete at $p$ for all $p\in M$.}
\end{definition}

\begin{remark}\label{rem:VMcomplete}
{\rm  
\begin{enumerate}
\item If $V_{\gamma}$ is upper bounded for any unit speed geodesic $\gamma$ with $\gamma_0=p$,  
then $(M,g,V)$ is always $(V,m)$-complete at $p$ for all $m\leq1$. In particular, if there exists a non-negative integrable function $f$ on $[0,+\infty[$ such that $\langle V,\nabla r_p\rangle _x\leq f(r_p(x))$, then $V_{\gamma}(r)\leq\int_0^rf(t)\d t\leq\int_0^{\infty}f(t)\d t<\infty$ so that $(M,g,V)$ is always $(V,m)$-complete at $p$ for all $m\leq1$.  
\item If $M$ is compact, then $(M,g,V)$ is always $(V,m)$-complete for $m\leq1$. Indeed, 
if so, the set $G_r:=\{\gamma\mid \gamma\text{ is a unit speed minimal geodesic}, L(\gamma)=r\}$ is an empty set for sufficiently large $r > 0$. This implies \eqref{eq:phimcomplete}. 
\item If there exists a non-negative locally integrable function $f$ on $[0,+\infty[$ satisfying $f(t)\leq C/t$ on $[1,+\infty[$ for some $C\in]0,(n-m)/2]$ and $\langle V,\nabla r_p\rangle \leq f(r_p)$, then 
$(V,m)$-completeness at $p$ holds for all $m\leq1$. 
Here we assume $n>1$ for $m=1$. 
In fact, we see for $r>1$
\begin{align*}
\inf_{L(\gamma)=r}\int_0^re^{-\frac{\;2V_{\gamma}(t)\;}{n-m}}\d t 
&\geq 
\int_1^re^{-\frac{\;2\int_0^tf(s)\d s\;}{n-m}}\d t\\
&\geq  e^{-\frac{\;2\int_0^1f(s)\d s\;}{n-m}}
\int_1^r e^{-\frac{\;2\int_1^tf(s)\d s\;}{n-m}}\d t\\
&\geq e^{-\frac{\;2\int_0^1f(s)\d s\;}{n-m}}
\int_1^r e^{-\frac{\;2C\log t\;}{n-m}}\d t\\
&\geq e^{-\frac{\;2\int_0^1f(s)\d s\;}{n-m}}
\int_1^r\frac{\d t}{\;t^{\frac{\;2C\;}{n-m}}\;}\to+\infty\quad \text{ as }\quad r\to\infty,
\end{align*}
where the infimum is taken over all minimizing unit speed geodesics $\gamma$ with $\gamma_0=p$.
\item The $(V,1)$-completeness at $p$ defined as in \cite[Definition~6.2]{Wylie:WarpedSplitting} implies the $(V,m)$-completeness at $p$ for every $m\leq1$   
provided $V_{\gamma}\geq0$ 
 for any unit speed geodesic $\gamma$ with $\gamma_0=p$. 
The converse also holds under $V_{\gamma}\leq0$ 
for any unit speed geodesic $\gamma$ with $\gamma_0=p$.   
\end{enumerate}
}
\end{remark}

\begin{lemma}\label{lem:phimcomplete}
Let $(M,g)$ be an $n$-dimensional complete non-compact Riemannian manifold and $V$ a $C^1$-vector field. Fix $p\in M$ and suppose that 
$(M,g,V)$ is $(V,m)$-complete at $p$. Then, for any sequence $\{q_i\}$ in $M$  such that  $d(p,q_i)\to+\infty$ as $i\to+\infty$, 
$s(p,q_{i})\to+\infty$. 
\end{lemma}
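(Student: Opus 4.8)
The plan is to reduce the assertion to an elementary monotonicity property of the function
\[
F(r):=\inf\Bigl\{\int_0^r e^{-\frac{2V_{\gamma}(t)}{n-m}}\,\d t\ :\ \gamma\ \text{minimizing unit speed geodesic},\ \gamma_0=p,\ L(\gamma)=r\Bigr\},
\]
which is precisely the quantity whose upper limit appears in Definition \ref{df:phicompletness}. One first notes that this infimum is taken over a nonempty set for every $r>0$: since $M$ is complete and noncompact, $d(p,\cdot)$ is unbounded and continuous, hence attains every value in $[0,+\infty[$, and Hopf--Rinow then provides a minimizing geodesic of the prescribed length.

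The first observation is that $s(p,q)\geq C_pF(d(p,q))$ for every $q\in M$. Indeed, any unit speed geodesic $\gamma$ with $\gamma_0=p$ and $\gamma(r_p(q))=q$ has length $r_p(q)=d(p,q)$, hence is minimizing; thus the family of curves over which the infimum defining $s_p(q)$ is taken is contained in the (larger) family defining $F(d(p,q))$, and since $C_p>0$ the inequality follows. The second observation is that $F$ is nondecreasing on $]0,+\infty[$: if $0<r<r'$ and $\gamma\colon[0,r']\to M$ is a minimizing unit speed geodesic with $\gamma_0=p$, then $\gamma|_{[0,r]}$ is again a minimizing unit speed geodesic starting at $p$, so, using that the integrand $e^{-2V_{\gamma}(t)/(n-m)}$ is strictly positive,
\[
\int_0^{r'} e^{-\frac{2V_{\gamma}(t)}{n-m}}\,\d t\ \geq\ \int_0^{r} e^{-\frac{2V_{\gamma}(t)}{n-m}}\,\d t\ \geq\ F(r),
\]
and taking the infimum over all such $\gamma$ with $L(\gamma)=r'$ gives $F(r')\geq F(r)$.

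With these two facts the conclusion is immediate. Since $F$ is nondecreasing, $\varlimsup_{r\to+\infty}F(r)=\lim_{r\to+\infty}F(r)=\sup_{r>0}F(r)$, and the $(V,m)$-completeness hypothesis states that this common value is $+\infty$; hence $F(r)\to+\infty$ as $r\to+\infty$. Given a sequence $\{q_i\}$ with $d(p,q_i)\to+\infty$, the first observation yields $s(p,q_i)\geq C_pF(d(p,q_i))$, and since $C_p>0$ the right-hand side tends to $+\infty$, so $s(p,q_i)\to+\infty$. The only step that is not pure bookkeeping with the definitions is the monotonicity of $F$, i.e.\ the upgrade of the $\varlimsup$ in the hypothesis to a genuine limit; it rests solely on the positivity of the integrand together with the fact that subarcs of minimizing geodesics are minimizing, so no curvature input or comparison geometry enters here.
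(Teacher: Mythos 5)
Your proof is correct, and it is essentially the argument the paper has in mind: the paper omits the proof with a pointer to Wylie--Yeroshkin's Proposition~3.4, which rests on exactly the two facts you isolate, namely that every unit speed geodesic from $p$ to $q$ of length $r_p(q)$ is minimizing (so $s(p,q)\geq C_pF(d(p,q))$) and that restricting minimizing geodesics shows $F$ is nondecreasing, upgrading the $\varlimsup$ in the $(V,m)$-completeness hypothesis to a genuine limit. Your attention to the nonemptiness of the family of minimizing geodesics of each prescribed length (via noncompactness and Hopf--Rinow) is a worthwhile detail that the cited source glosses over.
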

\begin{proof}
The proof is similar to that of \cite[Proposition~3.4]{WylieYeroshkin}. We omit it.   
\end{proof}

\begin{remark}\label{rem:SpRp}
{\rm  
Recall that $\phi_V$ depends on $p\in M$. 
For a fixed $p\in M$, we set  
$\underline{\phi}_V(r):=\inf_{B_r(p)}\phi_V$ and 
$\overline{\phi}_V(r):=\sup_{B_r(p)}\phi_V$
for $r\in]0,+\infty[$. Then $\underline{\phi}_V(r)\leq0\leq 
\overline{\phi}_V(r)$ for 
$r>0$ 
and $\lim_{r\to0}\overline{\phi}_V(r)=\lim_{r\to0}\underline{\phi}_V(r)=0$. 
If $x\notin {\rm Cut}(p)$, we have 
$s_p(x)= C_p 
 \int_0^{r_p(x)}e^{-\frac{\;2\phi_V(\gamma_t)\;}{n-m}}\d t$ for the unique unit speed geodesic $\gamma$ with $\gamma_0=p$ and $\gamma(r_p(x))=x$. So $\lim_{x\to p}\frac{s_p(x)}{r_p(x)}= C_p$. 
In particular, 
\begin{align*}
 C_p e^{-\frac{\;2\overline{\phi}_V(r_p(x))\;}{n-m}}r_p(x)\leq s_p(x)\leq  C_p 
e^{-\frac{\;2\underline{\phi}_V(r_p(x))\;}{n-m}}r_p(x)\quad\text{ for }\quad x\notin {\rm Cut}(p).
\end{align*} 

} 
\end{remark}

\subsection{Laplacian Comparison} 
Let $\kappa:[0,+\infty[\to\R$ be a continuous function and ${\sf a}_{\kappa}$ the unique solution defined on the maximal interval $]0,\delta_{\kappa}[$ for $\delta_{\kappa}\in]0,+\infty]$ of the following Riccati equation 
\begin{align}
-\frac{\d {\sf a}_{\kappa}}{\d s}(s)=\kappa(s)+{\sf a}_{\kappa}(s)^2\label{eq:RiccatiEq}
\end{align}
with the boundary conditions 
\begin{align}
\lim_{s\downarrow 0}s\, {\sf a}_{\kappa}(s)=1,\label{eq:BdryCond}
\end{align}
and 
\begin{align}
\lim_{s\uparrow \delta_{\kappa}}(s-\delta_{\kappa})\, {\sf a}_{\kappa}(s)=1\label{eq:BdryCondStrict*}
\end{align}
under $\delta_{\kappa}<\infty$.  
\eqref{eq:BdryCond} yields 
\begin{align}
\lim_{s\downarrow0}{\sf a}_{\kappa}(s)=+\infty.\label{eq:BdryCond0}
\end{align}
If $\delta_{\kappa}<\infty$, from \eqref{eq:BdryCondStrict*}, $\delta_{\kappa}$ is the explosion time of ${\sf a}_{\kappa}$ in the sense that 
\begin{align}
\lim_{s\uparrow\delta_{\kappa}}{\sf a}_{\kappa}(s)=-\infty.\label{eq:BdryCond*}
\end{align}
Actually, ${\sf a}_{\kappa}(s)={\s_{\kappa}'(s)}/{\s_{\kappa}(s)}$, where $\s_{\kappa}$ is the unique solution of 
Jacobi equation $\s_{\kappa}''(s)+\kappa(s)\s_{\kappa}(s)=0$ with $\s_{\kappa}(0)=0$, $\s_{\kappa}'(0)=1$, and 
$\delta_{\kappa}=\inf\{s>0\mid \s_{\kappa}(s)=0\}$. 
We write ${\sf a}_{\kappa}(s)=\cot_{\kappa}(s)$. 
Moreover, $]0,\delta_{\kappa}[\ni s\mapsto \cot_{\kappa}(s)$ is decreasing (resp.~strictly decreasing) provided $\kappa(s)$ is non-negative (resp.~positive) for all $s\in]0,\delta_{\kappa}[$ in view of 
 \eqref{eq:RiccatiEq}.   
If $\kappa$ is a real constant, then 
\begin{align*}
{\sf a}_{\kappa}(s)=\left\{\begin{array}{cc}\sqrt{\kappa}\cot(\sqrt{\kappa}s) & \kappa>0, \\ 1/s & \kappa=0, \\ \sqrt{-\kappa}\coth(\sqrt{-\kappa}s) & \kappa<0\end{array}\right.
\end{align*}
and $\delta_{\kappa}=\pi/\sqrt{\kappa^+}\leq+\infty$. 
Fix $m\in ]-\infty,1\,]$  
and set $m_{\kappa}(s):=(n-m)\cot_{\kappa}(s)$. Then \eqref{eq:RiccatiEq} is equivalent to 
\begin{align}
-\frac{\d m_{\kappa}}{\d s}(s)=(n-m)\kappa(s)+\frac{\;m_{\kappa}(s)^2\;}{n-m},\label{eq:RiccatiEqM}
\end{align}
and \eqref{eq:BdryCond} (resp.~\eqref{eq:BdryCondStrict*}) is equivalent to 
$\lim_{s\downarrow 0}s\, m_{\kappa}(s)=n-m$ (resp.~$\lim_{s\uparrow \delta_{\kappa}}(s-\delta_{\kappa})\, m_{\kappa}(s)=n-m$ under $\delta_{\kappa}<\infty$). 
In view of the uniqueness of the solution to  \eqref{eq:RiccatiEq} with \eqref{eq:BdryCond0},  
we have the scaling property ${\sf a}_{\kappa_{\alpha}}(s)=\frac{1}{\alpha}{\sf a}_{\alpha^2\kappa}(s/\alpha)$ for $\alpha>0$. Here $\kappa_{\alpha}(s):=\kappa(s/\alpha)$. In particular, ${\sf a}_{\kappa}(s)=\frac{1}{\alpha}{\sf a}_{\alpha^2\kappa}(s/\alpha)$ for $\alpha>0$ provided $\kappa$ is a constant. 
\vspace{0.5cm}

Our first result is the following Laplacian comparison along unit speed geodesic on 
weighted  complete Riemannian manifolds $(M,g,V)$ under the 
lower bound of modified $m$-Bakry-\'Emery Ricci tensor 
 for $m\leq1$. 

\begin{theorem}[Laplacian Comparison Theorem]\label{thm:GlobalLapComp}
Suppose that $(M,g)$ is an $n$-dimen\-sional complete smooth Riemannian manifold and $V$ is a $C^1$-vector field. Fix $p\in M$. 
Take $R\in]0,+\infty]$. 
Let $\phi_V$ be the function defined in \eqref{eq:ModifiedPhi}.  
Suppose that 
\begin{align}
{\Ric}_{m,n}(\Delta_V)_x(\nabla r_p,\nabla r_p)\geq(n-m)\kappa(s_p(x)) e^{-\frac{\;4\phi_V(x)\;}{n-m}} C_p^2  \label{eq:RiciLowBdd}
\end{align} 
holds under $r_p(x)<R$ with $x\in ({\rm Cut}(p)\cup\{p\})^c$. Then  
\begin{align}
(\Delta_V r_p)(x)\leq (n-m)\cot_{\kappa}(s_p(x))e^{-\frac{\;2\phi_V(x)\;}{n-m}}  C_p. \label{eq:GloLapComp}
\end{align}
\end{theorem}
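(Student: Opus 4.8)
The plan is to reduce \eqref{eq:GloLapComp} to a one-dimensional Riccati comparison along a single minimizing geodesic, carried out after the change of variable $t\mapsto s_p$. Fix $x\in(\mathrm{Cut}(p)\cup\{p\})^c$ with $r_p(x)<R$, and let $\gamma\colon[0,r_p(x)]\to M$ be the unique minimizing unit speed geodesic with $\gamma_0=p$ and $\gamma_{r_p(x)}=x$. For $t\in(0,r_p(x)]$ the point $\gamma_t$ lies in the interior of a minimizing geodesic, hence $\gamma_t\notin\mathrm{Cut}(p)$, the function $r_p$ is smooth near $\gamma_t$ with $\nabla r_p|_{\gamma_t}=\dot\gamma_t$, and by the remarks preceding the theorem $s_p(\gamma_t)=s(t):=C_p\int_0^te^{-2V_\gamma(\tau)/(n-m)}\,\d\tau$ and $\phi_V(\gamma_t)=V_\gamma(t)$. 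Put $\psi(t):=(\Delta_V r_p)(\gamma_t)$, $v(t):=\langle V_{\gamma_t},\dot\gamma_t\rangle$ and $w(t):=e^{-2V_\gamma(t)/(n-m)}$, so that $\dot s=C_pw$ and $\dot w=-\frac{2v}{n-m}w$.

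First I would derive a differential inequality for $\psi$. Bochner's formula for $r_p$ along $\gamma$, together with $|\mathrm{Hess}\,r_p|^2\ge(\Delta r_p)^2/(n-1)$, gives $\frac{\d}{\d t}(\Delta r_p)(\gamma_t)\le-\Ric(\dot\gamma,\dot\gamma)-\frac{1}{n-1}(\Delta r_p)^2$, while $\frac{\d}{\d t}\langle V,\nabla r_p\rangle(\gamma_t)=\langle\nabla_{\dot\gamma}V,\dot\gamma\rangle=\frac12\mathcal{L}_Vg(\dot\gamma,\dot\gamma)$ since $\nabla_{\dot\gamma}\nabla r_p=\nabla_{\dot\gamma}\dot\gamma=0$. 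Subtracting, using $\Ric(\dot\gamma,\dot\gamma)+\frac12\mathcal{L}_Vg(\dot\gamma,\dot\gamma)=\Ric_{m,n}(\Delta_V)(\dot\gamma,\dot\gamma)-\frac{v^2}{n-m}$ (the definition of $\Ric_{m,n}(\Delta_V)$ together with $V^*(\dot\gamma)=v$) and $\Delta r_p=\psi+v$ along $\gamma$, one obtains
\[
\psi'\le-\Ric_{m,n}(\Delta_V)(\dot\gamma,\dot\gamma)+\frac{v^2}{n-m}-\frac{(\psi+v)^2}{n-1}.
\]
This is the only place $m\le1$ is used: since $n-1\le n-m$, the last term dominates $\frac{(\psi+v)^2}{n-m}$, and $\frac{v^2-(\psi+v)^2}{n-m}=-\frac{\psi^2+2\psi v}{n-m}$; inserting the curvature bound \eqref{eq:RiciLowBdd}, which along $\gamma$ reads $\Ric_{m,n}(\Delta_V)(\dot\gamma,\dot\gamma)\ge(n-m)\kappa(s(t))w(t)^2C_p^2$, yields $\psi'\le-(n-m)\kappa(s)\,w^2C_p^2-\frac{\psi^2+2\psi v}{n-m}$.

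Next I would pass to $u(t):=\psi(t)/(C_pw(t))$, the quantity whose desired bound is exactly $u(t)\le(n-m)\cot_\kappa(s(t))=m_\kappa(s(t))$. Using $\dot w/w=-2v/(n-m)$, a short computation turns the last inequality into $\dot u\le-C_pw\big((n-m)\kappa(s)+\frac{u^2}{n-m}\big)$ — all the $v$-terms cancel — that is, in the variable $s$, $\frac{\d u}{\d s}\le-(n-m)\kappa(s)-\frac{u^2}{n-m}$, which is precisely equation \eqref{eq:RiccatiEqM} for $m_\kappa=(n-m)\cot_\kappa$ with equality weakened to $\le$ and $u$ in place of $m_\kappa$. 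As $t\downarrow0$ we have $\Delta r_p(\gamma_t)=(n-1)/t+O(t)$ and $V_\gamma(t)=O(t)$, hence $s(t)\sim C_pt$ and $u(s)=(n-1)/s+O(1)$, while $m_\kappa(s)=(n-m)/s+O(s)$; therefore $\limsup_{s\downarrow0}\big(u(s)-m_\kappa(s)\big)\le0$ (it equals $-\infty$ when $m<1$, and is bounded — indeed tends to $0$ after one further order of the expansion — when $m=1$).

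Finally I would run an ODE comparison. For $D:=u-m_\kappa$, subtracting gives $D'\le-\frac{u+m_\kappa}{n-m}\,D$ on any subinterval of $(0,r_p(x)]$ where both functions live; since $\frac{u+m_\kappa}{n-m}\sim\frac{2n-1-m}{(n-m)s}>0$ has a non-integrable singularity at $0$ while $D$ is bounded above near $0$, integrating this Gronwall inequality from $s_*$ and letting $s_*\downarrow0$ forces $D\le0$, i.e., $u(s)\le m_\kappa(s)$, which is \eqref{eq:GloLapComp}. The same comparison also yields $s_p(x)<\delta_\kappa$: were $s(t)\uparrow\delta_\kappa$ at some $t\le r_p(x)$, then $m_\kappa\to-\infty$ and hence $u\to-\infty$, contradicting the finiteness of $\psi(t)=\Delta_V r_p(\gamma_t)$ at the non-cut point $\gamma_t$; so the comparison is legitimate on all of $(0,r_p(x)]$. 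The step I expect to be most delicate is the behaviour at $s=0$ — matching the singular data of the two Riccati (in)equalities, which is harmless for $m<1$ but borderline for $m=1$, and making the comparison rigorous through that singularity; the remainder is the Bochner computation and the elementary observation $n-1\le n-m$.
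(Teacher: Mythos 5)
Your proposal is correct and follows essentially the same route as the paper: the Bochner formula with $|{\rm Hess}\,r_p|^2\geq(\Delta r_p)^2/(n-1)$ and $n-1\leq n-m$ gives the Riccati inequality $\frac{\d u}{\d s}\leq-(n-m)\kappa(s)-\frac{u^2}{n-m}$ for $u=\lambda$ in the reparametrized variable $s$, which is then compared with $m_\kappa=(n-m)\cot_\kappa$ using the asymptotics $u\sim(n-1)/s$, $m_\kappa\sim(n-m)/s$ at $s=0$ (Lemmas 3.1--3.2 of the paper). The only cosmetic difference is the comparison mechanism — you use the Gronwall factor $\exp\int\frac{u+m_\kappa}{n-m}$ for $D=u-m_\kappa$, whereas the paper shows $\beta(s)=\s_\kappa^2(s)(\lambda-m_\kappa(s))$ is non-increasing with $\beta(0)=0$ — and these are equivalent.
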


\begin{corollary}\label{cor:GlobalLapComp}
Suppose that $(M,g)$ is an $n$-dimensional complete smooth Riemannian manifold and $V$ is a $C^1$-vector field.  
Fix $p\in M$ and assume $\delta_{\kappa}<\infty$. Then 
\begin{align*}
\lim_{s_p(x)\uparrow \delta_{\kappa}}\Delta_V r_p(x)=-\infty.
\end{align*}
\end{corollary}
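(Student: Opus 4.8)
The plan is to read the statement straight off Theorem~\ref{thm:GlobalLapComp}, the only real work being to keep the conformal factor that multiplies $\cot_{\kappa}$ in \eqref{eq:GloLapComp} bounded below. Taking $R=+\infty$ in Theorem~\ref{thm:GlobalLapComp} (with \eqref{eq:RiciLowBdd} in force), we have, for every $x\in({\rm Cut}(p)\cup\{p\})^c$,
\begin{align*}
\Delta_V r_p(x)\le(n-m)\cot_{\kappa}(s_p(x))\,e^{-\frac{\;2\phi_V(x)\;}{n-m}}C_p .
\end{align*}
Since $\delta_{\kappa}<\infty$, the boundary condition \eqref{eq:BdryCondStrict*}, equivalently \eqref{eq:BdryCond*}, yields $\cot_{\kappa}(s)\to-\infty$ as $s\uparrow\delta_{\kappa}$; note also that $\cot_{\kappa}=\s_{\kappa}'/\s_{\kappa}$ is negative on a left neighbourhood of $\delta_{\kappa}$, because $\s_{\kappa}>0$ on $]0,\delta_{\kappa}[$ with $\s_{\kappa}(\delta_{\kappa})=0$. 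Hence everything reduces to producing a constant $c>0$ with $e^{-2\phi_V(x)/(n-m)}C_p\ge c$ along a sequence realizing $s_p(x)\uparrow\delta_{\kappa}$.

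I would first record that $s_p(x)<\delta_{\kappa}$ for every $x\in({\rm Cut}(p)\cup\{p\})^c$: along the unique minimal unit speed geodesic $\gamma$ from $p$ to $x$, the map $t\mapsto s_p(\gamma_t)=C_p\int_0^t e^{-2\phi_V(\gamma_u)/(n-m)}\,\d u$ is continuous, strictly increasing, and vanishes at $t=0$; if it reached $\delta_{\kappa}$ at some $t^\ast$ with $0<t^\ast\le r_p(x)$, then since $\gamma_{t^\ast}\notin{\rm Cut}(p)$ makes $\Delta_V r_p$ continuous near $\gamma_{t^\ast}$, letting $t\uparrow t^\ast$ in the displayed inequality would force $\Delta_V r_p(\gamma_{t^\ast})=-\infty$, absurd. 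Thus $s_p$ takes values in $[0,\delta_{\kappa}[$ on $({\rm Cut}(p)\cup\{p\})^c$, and ``$s_p(x)\uparrow\delta_{\kappa}$'' means: choose $x_k\in({\rm Cut}(p)\cup\{p\})^c$ with $s_p(x_k)\to\delta_{\kappa}$.

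The only delicate point is to confine such a sequence to a fixed metric ball, and here I would invoke the $(V,m)$-completeness at $p$, which is the standing hypothesis in the geometric applications of this corollary. Under it, Lemma~\ref{lem:phimcomplete} gives $s_p(x)\to+\infty$ whenever $r_p(x)\to+\infty$, so $\{s_p<\delta_{\kappa}\}\subset B_{\rho}(p)$ for some finite $\rho$; by continuity of $V$ one then has $|\phi_V|\le\rho\,\sup_{\overline{B_{\rho}(p)}}|V|=:\Lambda<\infty$ on that ball, whence $e^{-2\phi_V(x)/(n-m)}C_p\ge C_p e^{-2\Lambda/(n-m)}=:c>0$ on $\{s_p<\delta_{\kappa}\}$. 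Inserting this into the displayed inequality gives $\Delta_V r_p(x_k)\le(n-m)\,c\,\cot_{\kappa}(s_p(x_k))\to-\infty$, which is the assertion. Without assuming $(V,m)$-completeness the same computation handles any sequence that stays in a bounded region; what would have to be excluded separately is a sequence with $s_p(x_k)\to\delta_{\kappa}<\infty$ while $r_p(x_k)\to+\infty$, along which the conformal factor might degenerate to $0$.

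I expect this dichotomy — ensuring that $s_p(x)$ cannot climb to the finite level $\delta_{\kappa}$ along points escaping to infinity — to be the one spot needing care, and Remark~\ref{rem:VMcomplete} together with Lemma~\ref{lem:phimcomplete} are the tools I would rely on. Once the conformal factor is bounded below, the corollary follows immediately from the Laplacian comparison \eqref{eq:GloLapComp} and the explosion of $\cot_{\kappa}$ at $\delta_{\kappa}$.
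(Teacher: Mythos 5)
The paper does not actually print a proof of this corollary; its intended argument is exactly your first paragraph (the comparison \eqref{eq:GloLapComp} plus $\cot_{\kappa}(s)\to-\infty$ as $s\uparrow\delta_{\kappa}$), and that is also how the statement is invoked later, e.g.\ in the proof of Theorem~\ref{thm:WeightedMyers} and in Corollary~\ref{cor:Cutlocus}. Your write-up is correct as far as it goes, and it is in fact more careful than the paper on the one genuinely delicate point: the right-hand side of \eqref{eq:GloLapComp} is a product of $\cot_{\kappa}(s_p(x))\to-\infty$ with the conformal factor $C_pe^{-2\phi_V(x)/(n-m)}$, which could a priori collapse to $0$ along a sequence with $r_p(x_k)\to+\infty$. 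You correctly observe that the argument closes immediately for sequences confined to a bounded set (where $\phi_V$ is bounded by continuity of $V$), and that the unbounded case needs an extra input; your choice of $(V,m)$-completeness at $p$ is the natural one, and under it Lemma~\ref{lem:phimcomplete} (equivalently, the compactness conclusion of Corollary~\ref{cor:WeightedMyers}) rules the unbounded case out. Two small remarks: first, the corollary as printed states neither \eqref{eq:RiciLowBdd} nor $(V,m)$-completeness, so strictly speaking you are proving it under hypotheses the authors left implicit --- this is a defect of the statement rather than of your proof, and in every place the paper actually uses the corollary the limit is taken along a fixed compact geodesic segment, where your ``bounded region'' case already suffices. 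Second, your preliminary claim that $s_p<\delta_{\kappa}$ off the cut locus is exactly Corollary~\ref{cor:Cutlocus}, proved there by the same continuity/explosion argument you give, so you could simply cite it.
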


\begin{remark}
{\rm The sufficient condition \eqref{eq:RiciLowBdd} under $r_p(x)<R$ with $x\in ({\rm Cut}(p)\cup\{p\})^c$ 
for our 
Laplacian 
comparison theorem is weaker than the condition: 
\begin{align}
{\Ric}_{m,n}(\Delta_V)(x)\geq(n-m)\kappa(s_p(x)) e^{-\frac{\;4\phi_V(x)\;}{n-m}}  C_p^2 \;g_x
\label{eq:RiciLowBddStrong}
\end{align} 
under $r_p(x)<R$, because $\nabla r_p(x)$ is defined only for 
$x\notin{\rm Cut}(p)\cup\{p\}$. In particular, CD$(K,m)$-condition for 
$K(x)=(n-m)\kappa(s_p(x)) e^{-\frac{\;4\phi_V(x)\;}{n-m}}  C_p^2 $ always implies that \eqref{eq:RiciLowBdd} holds for all 
$x\in ({\rm Cut}(p)\cup\{p\})^c$.  
}
\end{remark}

\begin{remark}
{\rm The inequality \eqref{eq:GloLapComp} is meaningful at $p$, because 
$m_{\kappa}(0+)=+\infty$ and $\Delta_Vr_p(p)=\Delta r_p(p)=+\infty$ in view of the classical Laplacian comparison theorem for $\Delta$ under local upper sectional curvature bound (see \cite[Theorem~3.4.2]{Hsu:2001}). Moreover, the following inequality 
\begin{align}
r_p(x)(\Delta_V r_p)(x)\leq (n-m)r_p(x)\cot_{\kappa}(s_p(x))e^{-\frac{\;2\phi_V(x)\;}{n-m}}C_p\label{eq:GloLapComp*}
\end{align}
is also meaningful at $p$. Indeed, the right hand side of 
\eqref{eq:GloLapComp*} has the value $n-m$ at $x=p$ by Remark~\ref{rem:SpRp} and the left hand side has the value $n-1$ at $x=p$ by the classical Laplacian comparison theorem for $\Delta$ as noted above. 
}
\end{remark}

\begin{remark}
{\rm Theorem~\ref{thm:GlobalLapComp} 
generalizes \cite[Theorem~4.4]{WylieYeroshkin}. 
}
\end{remark}
\subsection{Geometric consequences}

\begin{theorem}[Weighted Myers' Theorem]\label{thm:WeightedMyers}
Let $(M,g)$  be an $n$-dimensional complete Riemannian manifold and a $C^1$-vector field $V$.  Fix $p\in M$.  
Assume that \eqref{eq:RiciLowBdd} 
holds for all $x\in ({\rm Cut}(p)\cup\{p\})^c$ 
 and $\delta_{\kappa}<\infty$. 
 Then $s(p,q)\leq \delta_{\kappa}$ for all $q\in M$. 
\end{theorem}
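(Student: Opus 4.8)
The plan is to run the classical Myers blow-up argument for the $V$-Laplacian, using the comparison estimate \eqref{eq:GloLapComp} (equivalently Corollary~\ref{cor:GlobalLapComp}). Suppose, for contradiction, that there is a point $q\in M$ with $s(p,q)>\delta_\kappa$; then $q\neq p$ (since $s(p,p)=0<\delta_\kappa$) and $\ell:=d(p,q)>0$. As $M$ is complete, fix a minimizing unit speed geodesic $\gamma:[0,\ell]\to M$ from $p$ to $q$. For $0<r<\ell$ the restriction $\gamma|_{[0,r]}$ is still minimizing, so $r_p(\gamma_r)=r$ and $\gamma_r\notin{\rm Cut}(p)\cup\{p\}$; in particular $\gamma|_{[0,r]}$ is the unique minimal geodesic from $p$ to $\gamma_r$, and Remark~\ref{rem:SpRp} (with the identity $\phi_V(\gamma_t)=V_\gamma(t)$ recorded just before it) gives
\begin{align*}
s_p(\gamma_r)=C_p\int_0^{r}e^{-\frac{2V_\gamma(t)}{n-m}}\,\d t=:f(r),\qquad 0<r<\ell .
\end{align*}
The function $f$ extends continuously to $[0,\ell]$, is strictly increasing, satisfies $f(0)=0$, and $f(\ell)\geq s_p(q)=s(p,q)>\delta_\kappa$, because $\gamma$ is a competitor in the infimum defining $s_p(q)$.

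By the intermediate value theorem there is a unique $r_0\in(0,\ell)$ with $f(r_0)=\delta_\kappa$; hence $s_p(\gamma_r)=f(r)\uparrow\delta_\kappa$ as $r\uparrow r_0$. Since $0<r_0<\ell$, we have $\gamma_{r_0}\notin{\rm Cut}(p)\cup\{p\}$, so $r_p$ is smooth and $\phi_V$ is continuous near $\gamma_{r_0}$; thus $\Delta_V r_p=\Delta r_p-\langle V,\nabla r_p\rangle$ is continuous there and $\Delta_V r_p(\gamma_{r_0})\in\R$. On the other hand the hypotheses of Corollary~\ref{cor:GlobalLapComp} hold (\eqref{eq:RiciLowBdd} is assumed at every $x\in({\rm Cut}(p)\cup\{p\})^c$ and $\delta_\kappa<\infty$), so $\Delta_V r_p(\gamma_r)\to-\infty$ as $r\uparrow r_0$; equivalently, insert $s_p(\gamma_r)\uparrow\delta_\kappa$ into \eqref{eq:GloLapComp} and use $\cot_\kappa(s)\to-\infty$ as $s\uparrow\delta_\kappa$ (cf.~\eqref{eq:BdryCond*}) while $e^{-2\phi_V(\gamma_r)/(n-m)}C_p$ is continuous and positive near $\gamma_{r_0}$. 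This contradicts the finiteness of $\Delta_V r_p(\gamma_{r_0})$, so no such $q$ exists and $s(p,q)\leq\delta_\kappa$ for all $q\in M$.

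I do not expect a serious obstacle: the argument mirrors the gradient case in \cite{WylieYeroshkin} and \cite{KL}. The two points deserving care are (i) identifying $s_p$ restricted to the minimizing geodesic with the explicit integral $f$, which uses that interior points of a minimizing geodesic lie off the cut locus and carry a unique minimal geodesic from $p$, and (ii) noting that the level $\{s_p=\delta_\kappa\}$ is attained at an \emph{interior} point $\gamma_{r_0}$ of the geodesic, hence off ${\rm Cut}(p)\cup\{p\}$, where $\Delta_V r_p$ is honestly finite --- which is precisely what clashes with the blow-up from Corollary~\ref{cor:GlobalLapComp}.
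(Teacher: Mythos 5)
Your proof is correct and follows essentially the same route as the paper: the blow-up of the comparison bound $(n-m)\cot_\kappa(s_p)$ as $s_p\uparrow\delta_\kappa$ along a minimizing geodesic contradicts the finiteness of $\Delta_V r_p$ off the cut locus (this is exactly the mechanism of the paper's Corollary~\ref{cor:Cutlocus} and its proof of Theorem~\ref{thm:WeightedMyers}). If anything, your write-up is slightly more careful than the paper's, since you locate the contradiction at the interior point $\gamma_{r_0}$ with $f(r_0)=\delta_\kappa$, which is automatically off ${\rm Cut}(p)$, rather than appealing to smoothness of $r_p$ near $q$.
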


\begin{corollary}\label{cor:WeightedMyers}
Let $(M,g)$  be an $n$-dimensional complete Riemannian manifold and a $C^1$-vector field $V$.  Fix $p\in M$  and $\delta_{\kappa}<\infty$. Assume that \eqref{eq:RiciLowBdd} 
holds for all $x\in ({\rm Cut}(p)\cup\{p\})^c$ and $(M,g,V)$ is $(V,m)$-complete at $p$.   Then $M$ is compact. 
\end{corollary}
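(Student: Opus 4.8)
The plan is to obtain the conclusion by combining the weighted Myers' theorem (Theorem~\ref{thm:WeightedMyers}) with Lemma~\ref{lem:phimcomplete} through a contradiction argument. First I would apply Theorem~\ref{thm:WeightedMyers}: since \eqref{eq:RiciLowBdd} holds for all $x\in({\rm Cut}(p)\cup\{p\})^c$ and $\delta_{\kappa}<\infty$, it yields $s(p,q)=s_p(q)\leq\delta_{\kappa}$ for every $q\in M$. Thus the function $s_p$ is bounded on $M$ by the finite constant $\delta_{\kappa}$.

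Next I would argue by contradiction, assuming that $M$ is not compact. Since $(M,g)$ is complete, the Hopf--Rinow theorem tells us that closed bounded subsets of $M$ are compact, so a non-compact $M$ has unbounded Riemannian distance from $p$; hence there is a sequence $\{q_i\}\subset M$ with $d(p,q_i)\to+\infty$. Now $M$ is complete and non-compact and $(M,g,V)$ is $(V,m)$-complete at $p$ by hypothesis, so Lemma~\ref{lem:phimcomplete} applies and gives $s(p,q_i)\to+\infty$. This contradicts the bound $s(p,q_i)\leq\delta_{\kappa}<\infty$ established in the first step, so $M$ must be compact.

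I do not expect a genuine obstacle here, since the corollary is essentially a formal consequence of the two preceding results; the proof is short. The only points deserving a little care are logical: Lemma~\ref{lem:phimcomplete} is stated only for complete \emph{non-compact} manifolds, so it should be invoked strictly inside the ``suppose $M$ non-compact'' branch, which is exactly how the contradiction is arranged; and the implication ``$s_p$ bounded together with $(V,m)$-completeness forces bounded Riemannian distance, hence compactness by completeness'' should be spelled out via Hopf--Rinow. With these observations in place the argument closes cleanly.
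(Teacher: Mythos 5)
Your proposal is correct and follows exactly the paper's own argument: apply Theorem~\ref{thm:WeightedMyers} to bound $s(p,\cdot)$ by $\delta_{\kappa}$, then suppose $M$ non-compact, extract a sequence with $d(p,q_i)\to+\infty$, and use Lemma~\ref{lem:phimcomplete} to reach a contradiction. The added remarks about Hopf--Rinow and about invoking Lemma~\ref{lem:phimcomplete} only inside the non-compact branch are sensible but do not change the substance.
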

\begin{remark}
{\rm 
\begin{enumerate}
\item Theorem~\ref{thm:WeightedMyers} (resp.~Corollary~\ref{cor:WeightedMyers}) generalizes \cite[Theorem~2.2]{WylieYeroshkin} (resp. \cite[Corollary~2.3]{WylieYeroshkin}).
\item Since $V_{\gamma}\leq0$ for any unit speed geodesic $\gamma$ with $\gamma_0=p$
 implies the $(V,m)$-completeness at $p$, Corollary~\ref{cor:WeightedMyers} implies the compactness of $M$ 
 provided $\delta_{\kappa}<\infty$, \eqref{eq:RiciLowBdd} 
holds for $x\notin{\rm Cut}(p)\cup\{p\}$ and $V_{\gamma}\leq0$ 
any unit speed geodesic $\gamma$ with $\gamma_0=p$.  
\end{enumerate}
}
\end{remark}

Based on Theorems~\ref{thm:GlobalLapComp} and \ref{thm:WeightedMyers}, we can deduce several geometric 
fruitful results. Next  
we will give two versions of the Bishop-Gromov type volume comparison. 
The first one is for $\mu_V(A)=\int_Ae^{-\phi_V(x)}\m(\d x)$ of metric annuli $A(p,r_0,r_1):=\{x\in M\mid r_0\leq r_p(x)\leq r_1\}$. The comparison in this case will be in terms of the quantities
\begin{align}
\overline{\nu}_p(\kappa,r_0,r_1)&:=\int_{r_0}^{r_1}\int_{\mathbb{S}^{n-1}}\s_{\kappa}^{n-m}\left(\sup_{\eta}s_p(r,\eta) 
\right)\d r\d\theta,
\qquad \overline{\nu}_p(\kappa,r_1):=\overline{\nu}_p(\kappa,0,r_1),
\label{eq:VolSpaceFormNormalUpper}\\
\underline{\nu}_p(\kappa,r_0,r_1)&:=\int_{r_0}^{r_1}\int_{\mathbb{S}^{n-1}}\s_{\kappa}^{n-m}\left(\inf_{\eta}s_p(r,\eta)
\right)\d r\d\theta,
\qquad \underline{\nu}_p(\kappa,r_1):=\underline{\nu}_p(\kappa,0,r_1),
\label{eq:VolSpaceFormNormalLower}\\
\nu_p(\kappa,r_0,r_1)&:=\int_{r_0}^{r_1}\int_{\mathbb{S}^{n-1}}\s_{\kappa}^{n-m}(s_p(r,\theta))\d r\d\theta,
\qquad \nu_p(\kappa,r_1):=\nu_p(\kappa,0,r_1)
\label{eq:VolSpaceFormNormal}
\end{align}
under $s_p(r_1,\theta)\leq \delta_{\kappa}$ for all $\theta\in\mathbb{S}^{n-1}$. Here 
\begin{align*}
s_p(r,\theta):=C_p
\int_0^re^{-\frac{\;2V_\gamma(t)\;}{n-m}}\d t
\end{align*}
with $\theta=\dot{\gamma}_0$, and $\overline{\phi}_V(r)$ and $\underline{\phi}_V(r)$ are the functions defined in Remark~\ref{rem:SpRp}.  
If $\phi_V$ is rotationally symmetric around $p$, i.e., if there exists a $C^2$-function ${\Phi}_V$ on $[0,+\infty[$ such that $\phi_V(x)={\Phi}_V(r_p(x))$,  then $s_p(r,\theta)$ is independent of $\theta\in \mathbb{S}^{n-1}$.  
The second one  is for $\nu_V(A):=\int_Ae^{-\frac{\;2\phi_V(x)\;}{n-m}}\mu_V(\d x)=\int_Ae^{-\frac{\;n-m+2\;}{n-m}\phi_V(x)}\m(\d x)$ of the sets $C(p,s_0,s_1):=\{x\in M\mid s_0\leq s_p(x)\leq s_1\}$ and $C_s(p):= C(p,0,s)$. 
The set $C(p,s_0,s_1)$ also depends on $s_p$ and is quite different from annuli. 
The comparison in this case will be in terms of the 
quantities  
\begin{align}
v(\kappa,s_0,s_1):=\int_{s_0}^{s_1}\int_{\mathbb{S}^{n-1}}\s_{\kappa}^{n-m}(s)\d s\d\theta \quad \text{ and }\quad 
v(\kappa,s_1):=v(\kappa,0,s_1)
\label{eq:VolSpaceForm}
\end{align}
under $s_1\leq \delta_{\kappa}$. 
When $m\in ]-\infty,1]$ is an integer and $\kappa$ is a constant, \eqref{eq:VolSpaceForm} is the volume of 
annuli in the simply connected space form of constant curvature $\kappa$ and  dimension 
$n-m+1$.

\begin{theorem}[Bishop-Gromov Volume Comparison]\label{thm:BGVol}
Fix $p\in M$ and $R\in]0,+\infty]$. 
Suppose that $(M,g)$ is an $n$-dimensional complete smooth Riemannian manifold and a $C^1$-vector field $V$. 
Let $\kappa:[0,+\infty[\to\R$ be a continuous function.  Assume that \eqref{eq:RiciLowBdd} 
holds for $r_p(x)<R$ with $x\in ({\rm Cut}(p)\cup\{p\})^c$. Then we have the following:
\begin{enumerate}
\item\label{item:BG1} 
Suppose that $0\leq r_0< r_a\leq r_1$ and $0\leq r_0\leq r_b<r_1$. Then
\begin{align}
\frac{\;\mu_V(A(p,r_b,r_1))\;}{\mu_V(A(p,r_0,r_a))}\leq \frac{\;\overline{\nu}_p(\kappa,r_b,r_1)\;}{\underline{\nu}_p(\kappa,r_0,r_a)}\label{eq:BGAnnuliUpLow}
\end{align}
holds for $r_1<R$.  
Assume further that $\phi_V$ is rotationally symmetric around $p$. Then 
\begin{align}
\frac{\;\mu_V(A(p,r_b,r_1))\;}{\mu_V(A(p,r_0,r_a))}\leq \frac{\;\nu_p(\kappa,r_b,r_1)\;}{\nu_p(\kappa,r_0,r_a)}\label{eq:BGAnnuli}
\end{align}
holds for $r_1<R$, in particular, the function 
\begin{align}
]0,R[\, \ni \,r\mapsto \frac{\;\mu_V(B_r(p))\;}{\nu_p(\kappa,r)}\label{eq:BG}
\end{align}
is non-increasing. 
\item\label{item:BG2} 
Suppose that $0\leq s_0<s_a\leq s_1$ and $0\leq s_0\leq s_b<s_1$. Then
\begin{align}
\frac{\;\nu_V(C(p,s_b,s_1))\;}{\nu_V(C(p,s_0,s_a))}\leq \frac{\;v(\kappa,s_b,s_1)\;}{v(\kappa,s_0,s_a)}\label{eq:BGAnnuli*}
\end{align}
holds for $s_1<S$. 
In particular, the function 
\begin{align}
]0,S[\, \ni \, s\mapsto \frac{\;\nu_V(C_s(p))\;}{v(\kappa,s)}\label{eq:BG*}
\end{align}
is non-increasing. Here $S=\inf_{\theta\in\mathbb{S}^{n-1}}s_p(R,\theta)$.  
\end{enumerate}
\end{theorem}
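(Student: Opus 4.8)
The plan is to reduce everything to a one-variable computation in geodesic polar coordinates around $p$, and to feed in the Laplacian comparison of Theorem~\ref{thm:GlobalLapComp} as the monotone input. Write $\m = \mathcal{A}(r,\theta)\,\d r\,\d\theta$ for the volume element in polar coordinates, valid on the star-shaped domain bounded by the cut locus. Along a fixed unit speed minimal geodesic $\gamma$ with $\dot\gamma_0=\theta$, the first variation formula gives $\partial_r \log\mathcal{A}(r,\theta) = \Delta r_p(\gamma_r)$, and hence for the weighted density $\mathcal{A}_V(r,\theta) := e^{-\phi_V(\gamma_r)}\mathcal{A}(r,\theta)$ we get $\partial_r\log\mathcal{A}_V = \Delta r_p - \langle V,\nabla r_p\rangle = \Delta_V r_p(\gamma_r)$, using that $\partial_r\phi_V(\gamma_r) = V r_p(\gamma_r)$ off the cut locus. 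Theorem~\ref{thm:GlobalLapComp} then bounds this logarithmic derivative by $(n-m)\cot_\kappa(s_p(\gamma_r))e^{-2\phi_V(\gamma_r)/(n-m)}C_p$. The key observation is that the right-hand side is exactly $\partial_r\log\bigl(\s_\kappa(s_p(r,\theta))^{n-m}\bigr)$: indeed $\partial_r s_p(r,\theta) = C_p e^{-2V_\gamma(r)/(n-m)} = C_p e^{-2\phi_V(\gamma_r)/(n-m)}$ off the cut locus, so differentiating and using $\s_\kappa'/\s_\kappa = \cot_\kappa$ gives the claim. Therefore $r\mapsto \mathcal{A}_V(r,\theta)\big/\s_\kappa(s_p(r,\theta))^{n-m}$ is non-increasing in $r$ on its domain of definition, for every fixed $\theta$.

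From this pointwise monotonicity the integrated statements follow by the standard averaging lemma: if $f,h$ are positive functions on an interval with $f/h$ non-increasing, then $\int_{r_b}^{r_1} f\big/\int_{r_0}^{r_a} f \le \int_{r_b}^{r_1} h\big/\int_{r_0}^{r_a} h$ whenever $r_0\le r_a$, $r_0 \le r_b < r_1$ (and analogously with $r_a \le r_1$ in the two placements used). For part~(1) I would apply this with $f(r) = \int_{\mathbb{S}^{n-1}}\mathcal{A}_V(r,\theta)\,\d\theta$ (extended by $0$ past the cut distance in direction $\theta$, which only helps the inequality since the ratio then stays non-increasing) and $h(r)$ the corresponding space-form comparison density; the upper/lower versions $\overline\nu_p,\underline\nu_p$ come from replacing $s_p(r,\theta)$ by $\sup_\eta s_p(r,\eta)$ in the numerator and $\inf_\eta s_p(r,\eta)$ in the denominator and using monotonicity of $\s_\kappa$ on $]0,\delta_\kappa[$, while under rotational symmetry $s_p(r,\theta)$ is $\theta$-independent and one gets the sharp form \eqref{eq:BGAnnuli}, and \eqref{eq:BG} is the special case $r_0=r_b=0$. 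For part~(2) I would instead change variables from $r$ to $s = s_p(r,\theta)$ along each geodesic: since $\d s = C_p e^{-2\phi_V(\gamma_r)/(n-m)}\d r$, one checks that $e^{-2\phi_V/(n-m)}\mathcal{A}_V\,\d r = \mathcal{A}_V\,\d s / C_p$ up to the Jacobian, so that $\nu_V(C(p,s_0,s_1))$ is, after integrating in $\theta$, an integral over $s$ of a density whose ratio to $\s_\kappa(s)^{n-m}$ is non-increasing in $s$ (this is the same monotone quantity as above, now read in the $s$-variable); the averaging lemma in the $s$-variable then yields \eqref{eq:BGAnnuli*} and \eqref{eq:BG*}, with the cutoff $S = \inf_\theta s_p(R,\theta)$ arising because \eqref{eq:RiciLowBdd} is only assumed for $r_p < R$.

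The main obstacle is the cut locus, exactly as in the classical Bishop-Gromov argument: the density $\mathcal{A}_V$ is only smooth on the segment domain $\{r < c(\theta)\}$ where $c(\theta)$ is the cut distance in direction $\theta$, and $\phi_V$, $s_p$ are only guaranteed continuous there. The standard fix is to extend $\mathcal{A}_V(r,\theta)$ by zero for $r \ge c(\theta)$; one must check that the extended function $f(r)$ still has $f(r)/h(r)$ non-increasing — this is true because the cut distance is measurable in $\theta$, the set $\{(r,\theta): r < c(\theta)\}$ is open, and killing mass beyond $c(\theta)$ can only decrease the ratio further as $r$ increases. A secondary technical point is justifying $\partial_r\phi_V(\gamma_r) = Vr_p(\gamma_r)$ and the differentiability of $s_p$ along $\gamma$ off the cut locus, but this was already recorded in Section~2 (the discussion following \eqref{eq:ModifiedPhi} and in Remark~\ref{rem:SpRp}), so it may be invoked directly. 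Once the monotone density is in hand, everything else is the routine averaging argument.
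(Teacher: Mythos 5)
Your proposal follows essentially the same route as the paper: the pointwise monotonicity of $r\mapsto J_V(r,\theta)/\s_{\kappa}(s_p(r,\theta))^{n-m}$ derived from the Laplacian comparison (the paper's Lemma~\ref{lem:VolComp}, written there in the $s$-variable), cut-locus truncation by extending the density by zero, the Zhu-type averaging lemma applied fibrewise in $\theta$ with the $\sup/\inf$ replacement for \eqref{eq:BGAnnuliUpLow}, and the change of variables $r\mapsto s$ via \eqref{eq:conformal} for part~(2). The only differences are cosmetic (you differentiate in $r$ where the paper uses $\d/\d s$, and you describe the $\theta$-integration order slightly more loosely than the paper's per-direction application of \cite[Lemma~3.1]{Zhu97}), so the proposal is correct.
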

\begin{remark}
{\rm \eqref{eq:BG} (resp.~\eqref{eq:BG*}) may not be bounded as $r\to0$ (resp.~$s\to0$) unless $m=1$. Note that the Bishop type inequality holds for $m=1$ (see 
\cite[Corollary~4.6]{WylieYeroshkin}).  
}
\end{remark}

\begin{corollary}\label{cor:BGVol}
Fix $p\in M$ and $R\in]0,+\infty]$. 
Suppose that $(M,g)$ is an $n$-dimensional complete smooth Riemannian manifold and $V$ is a $C^1$-vector field. 
Assume that 
\begin{align*}
{\rm Ric}_{m,n}(\Delta_V)_x(\nabla r_p,\nabla r_p)\geq 0
\quad\text{ for }\quad r_p(x)<R\quad\text{ with }\quad x\notin {\rm Cut}(p)\cup\{p\}.
\end{align*}
Then
\begin{align}
\frac{\;\mu_V(B_{r_2}(p))\;}{\mu_V(B_{r_1}(p))}\leq 
e^{2(\overline{\phi}_V(r_1)-\underline{\phi}_V(r_2))}\left(\frac{\;r_2\;}{r_1}\right)^{n-m+1}\quad\text{ for all }\quad 0<r_1<r_2<R\label{eq:BGBall}
\end{align}
holds. 
\end{corollary}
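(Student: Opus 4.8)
\emph{Proof proposal.} The plan is to specialize Theorem~\ref{thm:BGVol}\eqref{item:BG1} to the constant function $\kappa\equiv0$ and then estimate the comparison quantities $\overline{\nu}_p$ and $\underline{\nu}_p$ by elementary integrals. First I would note that the hypothesis ${\rm Ric}_{m,n}(\Delta_V)_x(\nabla r_p,\nabla r_p)\geq0$ for $r_p(x)<R$, $x\notin{\rm Cut}(p)\cup\{p\}$, is exactly \eqref{eq:RiciLowBdd} with $\kappa\equiv0$, for which $\s_{\kappa}(s)=s$ and $\delta_{\kappa}=+\infty$, so that the admissibility condition $s_p(r_1,\theta)\leq\delta_{\kappa}$ entering \eqref{eq:VolSpaceFormNormalUpper}--\eqref{eq:VolSpaceFormNormalLower} holds trivially and Theorem~\ref{thm:BGVol}\eqref{item:BG1} applies with $R$ possibly $+\infty$. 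Taking in \eqref{eq:BGAnnuliUpLow} the parameters $r_0=r_b=0$, $r_a=r_1$ and common outer radius $r_2$ (so that $0\le r_0<r_a\le r_2$ and $0\le r_0\le r_b<r_2$ hold since $0<r_1<r_2$), and using $A(p,0,r)=B_r(p)$, gives
\[
\frac{\mu_V(B_{r_2}(p))}{\mu_V(B_{r_1}(p))}\;\leq\;\frac{\overline{\nu}_p(\kappa,r_2)}{\underline{\nu}_p(\kappa,r_1)}\qquad(\kappa\equiv0).
\]

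Next I would bound the two sides. By Remark~\ref{rem:SpRp}, for $x\notin{\rm Cut}(p)$ with $r_p(x)=r$ one has $C_p\,r\,e^{-2\overline{\phi}_V(r)/(n-m)}\leq s_p(x)\leq C_p\,r\,e^{-2\underline{\phi}_V(r)/(n-m)}$; since the cut locus is $\m$-null, this yields, for the integrands of $\overline{\nu}_p$ and $\underline{\nu}_p$, the bounds $\sup_{\eta}s_p(r,\eta)\leq C_p\,r\,e^{-2\underline{\phi}_V(r)/(n-m)}$ and $\inf_{\eta}s_p(r,\eta)\geq C_p\,r\,e^{-2\overline{\phi}_V(r)/(n-m)}$. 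Because $n-m>0$, raising to the power $n-m$ and using that $\underline{\phi}_V$ is non-increasing and $\overline{\phi}_V$ is non-decreasing (immediate from the definitions in Remark~\ref{rem:SpRp}), so that $\underline{\phi}_V(r)\geq\underline{\phi}_V(r_2)$ for $r\le r_2$ and $\overline{\phi}_V(r)\leq\overline{\phi}_V(r_1)$ for $r\le r_1$, I would obtain
\[
\overline{\nu}_p(\kappa,r_2)\leq|\mathbb{S}^{n-1}|\,C_p^{\,n-m}\,e^{-2\underline{\phi}_V(r_2)}\!\int_0^{r_2}\! r^{\,n-m}\,\d r,\qquad \underline{\nu}_p(\kappa,r_1)\geq|\mathbb{S}^{n-1}|\,C_p^{\,n-m}\,e^{-2\overline{\phi}_V(r_1)}\!\int_0^{r_1}\! r^{\,n-m}\,\d r.
\]
Evaluating $\int_0^r t^{\,n-m}\,\d t=r^{\,n-m+1}/(n-m+1)$ and forming the quotient, the factors $|\mathbb{S}^{n-1}|$, $C_p^{\,n-m}$ and $(n-m+1)^{-1}$ cancel, leaving exactly $e^{2(\overline{\phi}_V(r_1)-\underline{\phi}_V(r_2))}(r_2/r_1)^{\,n-m+1}$, which is \eqref{eq:BGBall}.

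The one point needing care — and the only mild obstacle — is the passage from the $\sup_\eta$ and $\inf_\eta$ in \eqref{eq:VolSpaceFormNormalUpper}--\eqref{eq:VolSpaceFormNormalLower} to the radial two-sided estimate of Remark~\ref{rem:SpRp}: along a minimizing unit speed geodesic $\gamma$ from $p$, every point $\gamma_t$ with $t\le r$ lies in $B_r(p)$, hence $\underline{\phi}_V(r)\le\phi_V(\gamma_t)\le\overline{\phi}_V(r)$ and $V_\gamma(t)=\phi_V(\gamma_t)$ as long as $\gamma_t\notin{\rm Cut}(p)$; the directions $\eta$ for which $\exp_p(r\eta)$ is a cut point form an $\m$-null set and do not affect the integrals defining $\overline{\nu}_p$ and $\underline{\nu}_p$. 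Everything else is bookkeeping of signs together with the elementary integration above.
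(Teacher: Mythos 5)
Your proposal is correct and follows essentially the same route as the paper: apply Theorem~\ref{thm:BGVol}\eqref{item:BG1} with $\kappa\equiv0$ (so $\s_{\kappa}(s)=s$), bound $\sup_{\eta}s_p(r,\eta)$ and $\inf_{\eta}s_p(r,\eta)$ by $C_p\,r\,e^{-2\underline{\phi}_V(r)/(n-m)}$ and $C_p\,r\,e^{-2\overline{\phi}_V(r)/(n-m)}$ via Remark~\ref{rem:SpRp}, pull the exponential factors out of the integrals using the monotonicity of $\underline{\phi}_V$ and $\overline{\phi}_V$, and integrate $r^{n-m}$. The only difference is that you spell out the steps (including the harmless remark about cut-locus directions) that the paper compresses into a single displayed chain of inequalities.
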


\begin{theorem}[Ambrose-Myers' Theorem]\label{thm:AmbroseMyers}
Let $(M,g)$  be an $n$-dimensional complete Riemannian manifold and $V$ a $C^1$-vector field. Fix $p\in M$.  
Assume that $(M,g,V)$ is $(V,m)$-complete at $p$. 
Suppose that  for every unit speed $($local minimizing$)$ geodesic $\gamma$ 
with $\gamma_0=p$, we have  
\begin{align}
\int_0^{\infty}e^{\frac{\;2V_{\gamma}(t)\;}{n-m}}{\Ric}_{m,n}(\Delta_V)(\dot{\gamma}_t,\dot{\gamma}_t)\d t=+\infty.\label{eq:Ambrose}
\end{align}
Then $M$ is compact. 
\end{theorem}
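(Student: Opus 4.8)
\emph{Proof proposal.} The plan is a contradiction argument. Suppose $M$ is not compact. By completeness and the Hopf--Rinow theorem there is a ray $\gamma:[0,+\infty[\to M$ with $\gamma_0=p$, i.e.\ a unit speed geodesic that minimizes on every subinterval $[0,t]$. Along such a ray $\gamma_t\notin{\rm Cut}(p)$ for all $t>0$, so $r_p$ is smooth near each $\gamma_t$, the map $t\mapsto\Delta_V r_p(\gamma_t)$ is smooth on $]0,+\infty[$, and, by Remark~\ref{rem:SpRp}, $s_p(\gamma_t)=s(t):=C_p\int_0^t e^{-2V_\gamma(\tau)/(n-m)}\,\d\tau$, since $\gamma|_{[0,t]}$ is the unique minimal geodesic from $p$ to $\gamma_t$. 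Applying Lemma~\ref{lem:phimcomplete} to the divergent sequence $(\gamma_k)_{k\in\N}$ and using monotonicity of $s(\cdot)$, the $(V,m)$-completeness at $p$ forces $s(t)\to+\infty$ as $t\to+\infty$; hence $s$ is a $C^1$ increasing bijection of $]0,+\infty[$ onto itself, and we may use $s$ as parameter along $\gamma$.

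Next I would reproduce, along this ray, the reparametrized Riccati inequality underlying Theorem~\ref{thm:GlobalLapComp}. Put $N:=n-m>0$, $w(t):=e^{2V_\gamma(t)/(n-m)}$ and
\begin{align*}
\beta(s):=\tfrac1{C_p}\,w(t)\,\Delta_V r_p(\gamma_t),\qquad s=s(t).
\end{align*}
Combining the Bochner identity for $r_p$, namely $\frac{\d}{\d t}\Delta r_p(\gamma_t)\le-\frac1{n-1}(\Delta r_p(\gamma_t))^2-{\Ric}(\dot\gamma_t,\dot\gamma_t)$, with $\frac12\mathcal L_Vg(\dot\gamma_t,\dot\gamma_t)=\frac{\d}{\d t}\langle V_{\gamma_t},\dot\gamma_t\rangle$ and the elementary inequality $\frac1{n-1}\ge\frac1{n-m}$ (valid since $m\le1$), which lets one absorb the cross term $\langle V_{\gamma_t},\dot\gamma_t\rangle\,\Delta_V r_p(\gamma_t)$ by completing the square, one obtains
\begin{align*}
\beta'(s)\le-\frac{\beta(s)^2}{N}-g(s),\qquad g(s):=\tfrac1{C_p^2}\,w(t)^2\,{\Ric}_{m,n}(\Delta_V)(\dot\gamma_t,\dot\gamma_t).
\end{align*}
The substitution $s=s(t)$ (so that $\d s=\tfrac{C_p}{w(t)}\,\d t$) turns the hypothesis \eqref{eq:Ambrose} into $\int_0^{+\infty}g(s)\,\d s=\tfrac1{C_p}\int_0^{+\infty}e^{2V_\gamma(t)/(n-m)}\,{\Ric}_{m,n}(\Delta_V)(\dot\gamma_t,\dot\gamma_t)\,\d t=+\infty$, the upper limit being $+\infty$ precisely because $s(t)\to+\infty$.

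The remaining step is a one-dimensional argument showing $\beta$ cannot stay finite on all of $]0,+\infty[$. Set $R(s):=\int_0^s g$ and $P(s):=\beta(s)+R(s)$; then $R$ is continuous with $R(s)\to+\infty$, and $P'(s)\le-\tfrac1N(P(s)-R(s))^2\le0$, so $P$ is non-increasing, whence $\beta(s)=P(s)-R(s)\le P(s_0)-R(s)\to-\infty$ for any fixed $s_0>0$. Two cases remain. If $P(s_\ast)<0$ for some $s_\ast$, then for $s\ge s_\ast$ large enough that $R(s)\ge0$ one has $(P-R)^2\ge P^2$, so $P'\le-P^2/N$ with $P<0$, which drives $P$ (hence $\beta=P-R$) to $-\infty$ at a finite parameter value; but $\beta$ is finite at every finite $s$ since $\gamma_{t(s)}\notin{\rm Cut}(p)$ --- a contradiction. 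Otherwise $P\ge0$ on $]0,+\infty[$; being non-increasing it converges, so $\int_{s_0}^{+\infty}\beta(s)^2\,\d s=\int_{s_0}^{+\infty}(P-R)^2\,\d s\le N\big(P(s_0)-\lim_{s\to\infty}P(s)\big)<+\infty$, contradicting $\beta(s)^2\to+\infty$. In either case we have a contradiction, so $M$ is compact.

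I expect the main obstacle to be exactly that the curvature term $g$ carries no sign, so one cannot simply discard it and quote scalar Riccati blow-up; the device of passing to $P=\beta+\int g$ and the dichotomy on the sign/boundedness of $P$ is what circumvents this. A secondary point needing care is the passage from $(V,m)$-completeness to $s_p(\gamma_t)\to+\infty$ along the ray, which is where Lemma~\ref{lem:phimcomplete} --- and hence the reduction to a fixed base point $p$ --- enters.
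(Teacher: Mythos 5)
Your proposal is correct, and it reaches the conclusion by a genuinely different finishing argument than the paper's. The skeleton is shared: argue by contradiction along a ray $\gamma$ from $p$, invoke the Riccati inequality of Lemma~\ref{lem:RiccatiDiferIneq} for $\lambda=C_p^{-1}e^{2V_\gamma/(n-m)}\Delta_V r_p$, and use $(V,m)$-completeness at $p$ to get $\int_0^\infty e^{-2V_\gamma(t)/(n-m)}\,\d t=+\infty$ along the ray. The paper, however, works in the $t$-parameter with \eqref{eq:lambda/dr}: it shows $\psi(t)=-\lambda-\tfrac{C_p}{n-m}\int_1^t e^{-2V_\gamma/(n-m)}\lambda^2-C_p^{-1}\int_1^t e^{2V_\gamma/(n-m)}\Ric_{m,n}(\Delta_V)(\dot\gamma,\dot\gamma)$ is non-decreasing, deduces $-\lambda(t)-\tfrac{C_p}{n-m}\int_1^t e^{-2V_\gamma/(n-m)}\lambda^2\ge C/(n-m)>1$ for large $t$, and then builds an explicit sequence $t_\ell\uparrow T<\infty$ with geometrically shrinking increments of $\int e^{-2V_\gamma/(n-m)}\d t$, along which an induction bootstraps $-\lambda\ge (C/(n-m))^{\ell}$, so $\lambda\to-\infty$ at the finite time $T$; $(V,m)$-completeness is what guarantees the sequence exists and $T<\infty$. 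You instead pass to the $s$-parameter (using \eqref{eq:lambda/ds}), set $P=\beta+\int_0^s g$ with $P'\le-\tfrac1N(P-R)^2\le0$, and close with a dichotomy: if $P$ ever becomes negative, then once $R\ge0$ the scalar comparison $P'\le-P^2/N$ forces blow-down at a finite $s$, contradicting smoothness of $\beta$ on all of $]0,+\infty[$; if $P\ge0$ throughout, then $\int\beta^2\,\d s<\infty$ contradicts $\beta=P-R\to-\infty$. Both handle the signless curvature term the same way (via the monotone combination of $\lambda$, $\int\lambda^2$ and $\int g$), and both use $(V,m)$-completeness for the same divergence; your version trades the paper's explicit iterative construction of the blow-up time for a shorter ODE dichotomy, at the cost of not exhibiting $T$ explicitly. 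The only points that deserve a line of justification in a final write-up are the ones you already identified: that a ray $\gamma$ from $p$ avoids $\mathrm{Cut}(p)$ so $\lambda$ is smooth on all of $]0,+\infty[$, and that $(V,m)$-completeness at $p$ (applied to minimizing segments of the ray) yields $s(t)\to+\infty$, hence $R(s)\to+\infty$ on the full half-line.
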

\begin{corollary}\label{cor:AmbroseMyersVbdd}
Let $(M,g)$  be an $n$-dimensional complete Riemannian manifold and $V$ a $C^1$-vector field. Fix $p\in M$. 
Assume ${\Ric}_{m,n}(\Delta_V)\geq0$ on $M$. 
Suppose that  there exists a non-negative measurable function $f$ on $[0,+\infty[$ satisfying $\int_0^{\infty}f(s)\d s<+\infty$ and $\langle V,\nabla r_p\rangle \geq -f(r_p)$, and 
for every unit speed $($local minimizing$)$ geodesic $\gamma$ 
with $\gamma_0=p$, we have  
\begin{align}
\int_0^{\infty}{\Ric}_{m,n}(\Delta_V)(\dot{\gamma}_t,\dot{\gamma}_t)\d t=+\infty.\label{eq:Ambrose*}
\end{align} 
Then $M$ is compact. 
\end{corollary}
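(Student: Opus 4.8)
The plan is to reduce Corollary~\ref{cor:AmbroseMyersVbdd} to Theorem~\ref{thm:AmbroseMyers} by checking that the present hypotheses entail the two hypotheses of that theorem: $(V,m)$-completeness at $p$, and the divergence \eqref{eq:Ambrose} along every unit speed (local minimizing) geodesic $\gamma$ with $\gamma_0=p$.

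For \eqref{eq:Ambrose} the key point is that the lower bound on $V_{\gamma}$ produced by $f$ controls the exponential weight. Put $C_0:=\int_0^{\infty}f(s)\,\d s<+\infty$. Along a minimizing unit speed geodesic $\gamma$ from $p$ one has $\dot\gamma_s=\nabla r_p(\gamma_s)$ off ${\rm Cut}(p)$, so $\langle V_{\gamma_s},\dot\gamma_s\rangle=\langle V,\nabla r_p\rangle_{\gamma_s}\geq -f(r_p(\gamma_s))=-f(s)$, hence $V_{\gamma}(t)=\int_0^t\langle V_{\gamma_s},\dot\gamma_s\rangle\,\d s\geq-\int_0^tf(s)\,\d s\geq-C_0$ for all $t$, and therefore $e^{2V_{\gamma}(t)/(n-m)}\geq e^{-2C_0/(n-m)}>0$ (using $n-m>0$). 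Since ${\Ric}_{m,n}(\Delta_V)\geq0$ makes the integrand of \eqref{eq:Ambrose*} nonnegative, this gives
\[
\int_0^{\infty}e^{\frac{2V_{\gamma}(t)}{n-m}}{\Ric}_{m,n}(\Delta_V)(\dot\gamma_t,\dot\gamma_t)\,\d t\;\geq\;e^{-\frac{2C_0}{n-m}}\int_0^{\infty}{\Ric}_{m,n}(\Delta_V)(\dot\gamma_t,\dot\gamma_t)\,\d t\;=\;+\infty,
\]
which is \eqref{eq:Ambrose} for that $\gamma$. For the $(V,m)$-completeness at $p$ I would invoke Remark~\ref{rem:VMcomplete}: the integrability of $f$ confines $V_{\gamma}$ to a bounded range along minimizing geodesics issuing from $p$, so that Remark~\ref{rem:VMcomplete}(1) applies. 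With both hypotheses in hand, Theorem~\ref{thm:AmbroseMyers} then yields that $M$ is compact.

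The step I expect to be the real obstacle is reconciling these verifications with the fact that Theorem~\ref{thm:AmbroseMyers} asks for \eqref{eq:Ambrose} along \emph{all} unit speed (local minimizing) geodesics from $p$, not merely the globally minimizing ones, while the estimate $\langle V,\nabla r_p\rangle\geq -f(r_p)$ is attached to $\nabla r_p$ and is thus informative on a geodesic only before its first cut point. So I would either (i) argue, by revisiting the proof of Theorem~\ref{thm:AmbroseMyers}, that it is enough to run the contradiction along a genuinely minimizing ray—which exists as soon as $M$ is non-compact, and along which the $f$-estimate and the $(V,m)$-completeness both operate—or (ii) carry the $f$-bound plus ${\Ric}_{m,n}(\Delta_V)\geq0$ directly into the index-form/Laplacian-comparison argument along such a ray. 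Everything else (the displayed estimate and the bookkeeping with $n-m>0$) is routine.
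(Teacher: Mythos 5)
Your verification of \eqref{eq:Ambrose} is exactly the paper's argument: from $\langle V,\nabla r_p\rangle\geq -f(r_p)$ one gets $V_{\gamma}(t)\geq -C_0$ with $C_0=\int_0^{\infty}f(s)\,\d s$ along minimizing geodesics from $p$, hence $e^{2V_{\gamma}(t)/(n-m)}\geq e^{-2C_0/(n-m)}$, and nonnegativity of $\Ric_{m,n}(\Delta_V)$ turns the divergence of \eqref{eq:Ambrose*} into that of \eqref{eq:Ambrose}. Your side remark about the cut locus is also well taken and correctly resolved by your option (i): the proof of Theorem~\ref{thm:AmbroseMyers} runs the contradiction along a single ray, which never meets ${\rm Cut}(p)$, so $\dot{\gamma}_t=\nabla r_p(\gamma_t)$ for all $t$ and the $f$-bound is available there.

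The genuine gap is your treatment of $(V,m)$-completeness. You assert that the integrability of $f$ ``confines $V_{\gamma}$ to a bounded range'' so that Remark~\ref{rem:VMcomplete}(1) applies. It does not: $\langle V,\nabla r_p\rangle\geq -f(r_p)$ yields only the \emph{lower} bound $V_{\gamma}\geq -C_0$, i.e.\ an \emph{upper} bound $e^{-2V_{\gamma}(t)/(n-m)}\leq e^{2C_0/(n-m)}$ on the integrand in Definition~\ref{df:phicompletness}, which is useless for forcing $\int_0^re^{-2V_{\gamma}(t)/(n-m)}\,\d t\to+\infty$; Remark~\ref{rem:VMcomplete}(1) requires the opposite inequality $\langle V,\nabla r_p\rangle\leq f(r_p)$. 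Moreover this cannot be patched from the stated hypotheses: on $\R^n$ with $V=\nabla\phi$, $\phi(x)=c\sqrt{1+|x|^2}$, $c>0$, one has $\Ric_{m,n}(\Delta_V)={\rm Hess}\,\phi+\frac{\d\phi\otimes \d\phi}{n-m}\geq0$, the hypothesis on $f$ holds with $f\equiv0$, and the integrand of \eqref{eq:Ambrose*} tends to $c^2/(n-m)>0$ along every ray from the origin, yet $V_{\gamma}(t)\geq c(t-1)$ makes $\int_0^{\infty}e^{-2V_{\gamma}(t)/(n-m)}\,\d t$ finite, so $(V,m)$-completeness fails and $M$ is noncompact. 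For what it is worth, the paper's own proof of this corollary checks only \eqref{eq:Ambrose} and invokes Theorem~\ref{thm:AmbroseMyers} without addressing $(V,m)$-completeness at all; you have therefore put your finger on a hypothesis that the statement as printed appears to be missing (a two-sided bound $|\langle V,\nabla r_p\rangle|\leq f(r_p)$, or an explicit assumption of $(V,m)$-completeness at $p$, would repair both your argument and the corollary).
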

\begin{corollary}\label{cor:AmbroseMyers}
Let $(M,g)$  be an $n$-dimensional complete Riemannian manifold and $V$ a $C^1$-vector field. Fix 
$p\in M$ and a constant $\kappa>0$.   
Assume that $(M,g,V)$ is $(V,m)$-complete at $p$. Suppose that for every unit speed $($local minimizing$)$ geodesic $\gamma$ with $\gamma_0=p$, we have 
\begin{align}
{\Ric}_{m,n}(\Delta_V)(\dot{\gamma}_t,\dot{\gamma}_t)\geq (n-m)\kappa e^{-\frac{\;4V_{\gamma}(t)\;}{n-m}}  C_p^2. \label{eq:AmbroseRiccibounds} 
\end{align} 
Then $M$ is compact. 
\end{corollary}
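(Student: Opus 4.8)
The plan is to deduce this from Theorem~\ref{thm:AmbroseMyers} by verifying its integral hypothesis \eqref{eq:Ambrose}. By the proof of Theorem~\ref{thm:AmbroseMyers}, it suffices to check \eqref{eq:Ambrose} along rays issuing from $p$, i.e.\ along globally minimizing unit speed geodesics $\gamma\colon[0,+\infty[\,\to M$ with $\gamma_0=p$ (if $M$ is compact there are no such rays and nothing to prove). Fix such a ray $\gamma$. Multiplying \eqref{eq:AmbroseRiccibounds} by $e^{2V_{\gamma}(t)/(n-m)}>0$, integrating over $[0,+\infty[$, and using $-\frac{4V_{\gamma}(t)}{n-m}+\frac{2V_{\gamma}(t)}{n-m}=-\frac{2V_{\gamma}(t)}{n-m}$ together with $(n-m)\kappa C_p^2>0$, we get
\begin{align*}
\int_0^{\infty}e^{\frac{2V_{\gamma}(t)}{n-m}}{\Ric}_{m,n}(\Delta_V)(\dot{\gamma}_t,\dot{\gamma}_t)\,\d t\ \geq\ (n-m)\kappa C_p^2\int_0^{\infty}e^{-\frac{2V_{\gamma}(t)}{n-m}}\,\d t ,
\end{align*}
so it remains to show that $\int_0^{\infty}e^{-2V_{\gamma}(t)/(n-m)}\,\d t=+\infty$.

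For this I would use the $(V,m)$-completeness at $p$. The integrand $e^{-2V_{\gamma}(t)/(n-m)}$ is positive, so $r\mapsto\int_0^{r}e^{-2V_{\gamma}(t)/(n-m)}\,\d t$ is non-decreasing; and for each $r>0$ the truncation $\gamma|_{[0,r]}$ is a minimizing unit speed geodesic with $\gamma_0=p$ of length $r$, hence a competitor in the infimum in \eqref{eq:phimcomplete}, so
\begin{align*}
\int_0^{r}e^{-\frac{2V_{\gamma}(t)}{n-m}}\,\d t\ \geq\ \inf_{L(\sigma)=r}\int_0^{r}e^{-\frac{2V_{\sigma}(t)}{n-m}}\,\d t ,
\end{align*}
the infimum being over minimizing unit speed geodesics $\sigma$ with $\sigma_0=p$. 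Restricting a minimizer of length $r'>r$ to $[0,r]$ shows the right-hand side is also non-decreasing in $r$, so by \eqref{eq:phimcomplete} it tends to $+\infty$ as $r\to\infty$. Hence $\int_0^{\infty}e^{-2V_{\gamma}(t)/(n-m)}\,\d t=+\infty$, which establishes \eqref{eq:Ambrose}, and Theorem~\ref{thm:AmbroseMyers} yields that $M$ is compact.

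The arithmetic in the first display is immediate; the one delicate point is the passage from the $(V,m)$-completeness at $p$ --- stated as a $\varlimsup$ of an infimum over \emph{all} minimizing geodesics of length $r$ from $p$ --- to the divergence of the single integral $\int_0^{\infty}e^{-2V_{\gamma}(t)/(n-m)}\,\d t$ along the chosen ray $\gamma$, which is exactly where the two monotonicities above (of the truncated integral and of the infimum in $r$) are needed. An alternative, self-contained route would bypass Theorem~\ref{thm:AmbroseMyers}: along a ray $\gamma$ from $p$ one has $\gamma_t\notin{\rm Cut}(p)$ for $t>0$, so $\Delta_V r_p$ is smooth along $\gamma$ and satisfies the Riccati comparison behind Theorem~\ref{thm:GlobalLapComp}; feeding in \eqref{eq:AmbroseRiccibounds} and $\int_0^{\infty}e^{-2V_{\gamma}(t)/(n-m)}\,\d t=+\infty$ forces a conjugate point along $\gamma$ at finite parameter, contradicting minimality --- but invoking Theorem~\ref{thm:AmbroseMyers} is shorter.
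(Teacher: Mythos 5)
Your proof is correct and follows essentially the same route as the paper: multiply \eqref{eq:AmbroseRiccibounds} by $e^{2V_{\gamma}(t)/(n-m)}$, use the $(V,m)$-completeness at $p$ to conclude $\int_0^{\infty}e^{-2V_{\gamma}(t)/(n-m)}\,\d t=+\infty$ along the relevant minimizing geodesics, and invoke Theorem~\ref{thm:AmbroseMyers}. The paper asserts that divergence step without justification; your two monotonicity observations (of the truncated integral and of the infimum in $r$) simply make explicit what the paper leaves implicit.
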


\begin{remark}
{\rm 
\begin{enumerate}
\item 
Theorem~\ref{thm:AmbroseMyers} is a version of Ambrose's Theorem (\cite{Ambrose}). Here  Ambrose's Theorem states that if for any (local minimizing) geodesic $\gamma$ emanating from a point $p\in M$, 
 $$
\int_0^{\infty}{\Ric}(\dot{\gamma}_t,\dot{\gamma}_t)\d t=+\infty, 
$$
then $M$ is compact. 
Cavalcante-Oliveira-Santos \cite{CavOliSantos} also proved the following different 
version of Ambrose's Theorem (see \cite[Theorem~2.1]{CavOliSantos}): 
Suppose that  every (local minimizing) geodesic $\gamma$ emanating from $p$ satisfies 
$$
\int_0^{\infty}{\Ric}_{m,n}(\Delta_{\nabla\phi})(\dot{\gamma}_t,\dot{\gamma}_t)\d t=+\infty
$$
under $m>n$  for $\phi\in C^2(M)$. Then $M$ is compact. Tadano~\cite[Theorem~14]{Tadano} extends  \cite[Theorem~2.1]{CavOliSantos} for $\Delta_V$ with modified $m$-Bakry-\'Emery Ricci tensor ${\rm Ric}_{m,n}(\Delta_V)$ under $m>n$. 
Our Theorem~\ref{thm:AmbroseMyers} is different  
from the above mentioned results. 
Tadano~\cite[Theorem~25]{TadanoNegative} also proves a version of Ambrose's Theorem for 
$\Delta_V$ with modified $1$-Bakry-\'Emery Ricci tensor 
${\rm Ric}_{1,n}(\Delta_V)$ 
under the condition ${\rm Ric}_{1,n}(\Delta_V)>0$ and $|V|\leq ke^{-\ell r_p}$ for some $k\geq0,\ell>0$. So our condition in Corollary~\ref{cor:AmbroseMyersVbdd} is milder than one in \cite[Theorem~25]{TadanoNegative}. 
\end{enumerate}
}
\end{remark}

In the following theorem and its corollary, we assume $V=\nabla\phi$ for some 
$\phi\in C^2(M)$ and set $C_p
=\exp\left(-\frac{\;2\phi(p)\;}{n-m} \right)$ for the definition of $s_p(x)$ with $p$ being an arbitrary point. As noted before, $s(p,q)$ is symmetric for any $p,q\in M$.  
Let 
$h=e^{-\frac{\;4\phi\;}{n-m}}g$ be the conformal change of the metric $g$. Then $s(p,q)$ is the smallest length in the $h$ metric of a minimal geodesic 
between $p$ and $q$ in the $g$ metric. As such, 
$d^{\,h}(p,q)\leq s(p,q)$  for any $q\in M$. So Theorem~\ref{thm:WeightedMyers} tells
us that the diameter of the metric $h$ is less than or equal to $\delta_{\kappa}$. For this conformal 
diameter estimate we also obtain the following rigidity characterization.
\begin{theorem}[Cheng's Maximal Diameter Theorem]\label{thm:ChengDiamSphere} 
Suppose that $(M,g)$, $n> 1$, is a complete Riemannian manifold and  $\phi\in C^2(M)$.  Fix $p,q\in M$. 
Assume that $\delta_{\kappa}<\infty$, $\kappa$ is positive on $]0,\delta_{\kappa}[$, $\kappa(s)=\kappa(\delta_{\kappa}-s)$ for all $s\in[0,\delta_{\kappa}]$, and 
\eqref{eq:RiciLowBdd} 
holds for all $x\in ({\rm Cut}(p)\cup\{p\})^c$. 
We further assume that \eqref{eq:RiciLowBdd} by  replacing $p$ with $q$ 
holds for all $x\in ({\rm Cut}(q)\cup\{q\})^c$. 
If  $d^{\,h}(p,q)=
\delta_{\kappa}$, 
then $m=1$, 
$\phi$ is rotationally symmetric around $p$, i.e., $\phi$ is a function depending only on radial $r$,    
and 
$g$ is a warped product metric of the form 
\begin{align*}
g&=\d r^2+e^{\frac{\;2 \phi(r) +2\phi(0)\;}{n-1}}
\s_{\kappa}^2(s(r))
g_{\mathbb{S}^{n-1}}, \quad 0\leq r\leq d(p,q),
\end{align*}
where $s(r)=\int_0^re^{-\frac{\;2 \phi (t)\;}{n-1}}\d t$ 
and $s(d(p,q))=
\delta_{\kappa}$. 
\end{theorem}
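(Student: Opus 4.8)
\emph{Proof strategy.} The plan is to run the Laplacian comparison theorem (Theorem~\ref{thm:GlobalLapComp}) simultaneously from $p$ and from $q$, force equality everywhere by a maximum-principle argument, and then read off the metric from the rigidity in Bochner's formula. I first normalize: since $V=\nabla\phi$ and $C_p=\e^{-2\phi(p)/(n-m)}$, one has $\phi_V=\phi-\phi(p)$ on $({\rm Cut}(p)\cup\{p\})^c$, so $\e^{-2\phi_V/(n-m)}C_p=\e^{-2\phi/(n-m)}$ and $\e^{-4\phi_V/(n-m)}C_p^2=\e^{-4\phi/(n-m)}$; thus \eqref{eq:RiciLowBdd} becomes $\Ric_{m,n}(\Delta_V)_x(\nabla r_p,\nabla r_p)\ge(n-m)\kappa(s_p(x))\e^{-4\phi(x)/(n-m)}$ and \eqref{eq:GloLapComp} reads $\Delta_V r_p\le(n-m)\cot_\kappa(s_p)\e^{-2\phi/(n-m)}=:\bar\psi_p$, with the analogous statements for $q$. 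By Theorem~\ref{thm:WeightedMyers} applied at $p$ and at $q$, both $s_p$ and $s_q$ are $[0,\delta_\kappa]$-valued. I also record the elementary identity $\cot_\kappa(s)+\cot_\kappa(\delta_\kappa-s)=0$ for $s\in(0,\delta_\kappa)$: the function $w(s):=\s_\kappa(\delta_\kappa-s)$ solves the same Jacobi equation as $u(s):=\s_\kappa(s)$ because $\kappa(s)=\kappa(\delta_\kappa-s)$, the Wronskian $uw'-u'w$ is constant and vanishes at $s=\delta_\kappa$ since $u(\delta_\kappa)=w(\delta_\kappa)=0$, and $\cot_\kappa(s)+\cot_\kappa(\delta_\kappa-s)=(u'w-uw')/(uw)$.

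Next I would show $r_p+r_q\equiv d:=d(p,q)$ on $M$. Since $d^{\,h}(p,\cdot)\le s_p$, $d^{\,h}(q,\cdot)\le s_q$ and $d^{\,h}(p,x)+d^{\,h}(x,q)\ge d^{\,h}(p,q)=\delta_\kappa$, we get $s_p(x)+s_q(x)\ge\delta_\kappa$ for every $x$; as $\kappa>0$ on $(0,\delta_\kappa)$, $\cot_\kappa$ is strictly decreasing there, so $\cot_\kappa(s_q(x))\le\cot_\kappa(\delta_\kappa-s_p(x))=-\cot_\kappa(s_p(x))$, whence $\bar\psi_p+\bar\psi_q=(n-m)\e^{-2\phi/(n-m)}(\cot_\kappa(s_p)+\cot_\kappa(s_q))\le0$ and hence $\Delta_V(r_p+r_q)\le\bar\psi_p+\bar\psi_q\le0$ in the barrier sense. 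Because $r_p+r_q\ge d$ with equality along any minimal $g$-geodesic from $p$ to $q$, the nonpositive function $d-r_p-r_q$ is a $\Delta_V$-subsolution attaining its maximum $0$ at an interior point of such a geodesic, so by the strong maximum principle $r_p+r_q\equiv d$. Consequently $M=\overline{B_d(p)}$ is compact, every point lies on a minimal geodesic from $p$ to $q$, ${\rm Cut}(p)=\{q\}$ and ${\rm Cut}(q)=\{p\}$, and, reinserting $0=\Delta_V(r_p+r_q)\le\bar\psi_p+\bar\psi_q\le0$, we obtain $\Delta_V r_p\equiv\bar\psi_p$, $\Delta_V r_q\equiv\bar\psi_q$ off the respective poles, and $s_p(x)+s_q(x)=\delta_\kappa$ everywhere (using strict monotonicity of $\cot_\kappa$).

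Then I would exploit the equality $\Delta_V r_p\equiv\bar\psi_p$ through Bochner's formula. Along a unit speed radial geodesic $\gamma$ from $p$, put $\psi=\Delta_V r_p\circ\gamma$ and $v=\frac{\d}{\d t}(\phi\circ\gamma)$, so $\Delta r_p\circ\gamma=\psi+v$; using ${\rm Hess}\,r_p(\dot\gamma,\cdot)=0$, the Cauchy--Schwarz bound $\|{\rm Hess}\,r_p\|^2\ge(\Delta r_p)^2/(n-1)$, the identity $\Ric+{\rm Hess}\,\phi=\Ric_{m,n}(\Delta_V)-\langle\nabla\phi,\cdot\rangle^2/(n-m)$ valid for $V=\nabla\phi$, and \eqref{eq:RiciLowBdd}, one gets
\[
\psi'\le-\frac{(\psi+v)^2}{n-1}-(n-m)\kappa(s_p)\e^{-4\phi/(n-m)}+\frac{v^2}{n-m},
\]
while a direct computation using $\frac{\d}{\d t}s_p=\e^{-2\phi/(n-m)}$ and the Riccati equation \eqref{eq:RiccatiEq} shows $\bar\psi_p\circ\gamma$ satisfies the same relation with $-(\bar\psi_p\circ\gamma+v)^2/(n-m)$ in place of $-(\psi+v)^2/(n-1)$, with equality. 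Subtracting and using $\psi=\bar\psi_p\circ\gamma$ yields $0\le(\psi+v)^2\,(m-1)/((n-1)(n-m))$; since $n>1$ and $m\le1$ the factor is $\le0$, so either $m=1$ or $\psi+v\equiv0$ on $(0,d)$, and the latter is impossible because $\psi=-v$ is bounded near $t=0$ whereas $\bar\psi_p\circ\gamma\to+\infty$ there by \eqref{eq:BdryCond0}. Hence $m=1$, and tracing equality through the chain forces, on $({\rm Cut}(p)\cup\{p\})^c$, the curvature equality $\Ric_{m,n}(\Delta_V)(\dot\gamma,\dot\gamma)=(n-1)\kappa(s_p)\e^{-4\phi/(n-1)}$ and the umbilicity ${\rm Hess}\,r_p=\frac{\Delta r_p}{n-1}(g-\d r_p\otimes\d r_p)$.

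Finally I would extract the metric. In geodesic polar coordinates $(r,\theta)$ about $p$ on $M\setminus\{p,q\}$, $g=\d r^2+g_r$ with $\tfrac12\partial_r g_r={\rm Hess}\,r_p=\tfrac{\Delta r_p}{n-1}g_r$; integrating this with the asymptotics $g_r\sim r^2 g_{\mathbb{S}^{n-1}}$ and the relation $\Delta r_p=(n-1)\cot_\kappa(s_p)\e^{-2\phi/(n-1)}+\partial_r\phi$ coming from $\Delta_V r_p=\bar\psi_p$, one gets $g=\d r^2+\e^{2(\phi+\phi(0))/(n-1)}\s_\kappa^2(s(r,\theta))\,g_{\mathbb{S}^{n-1}}$ with $s(r,\theta)=\int_0^r\e^{-2\phi(u,\theta)/(n-1)}\d u$ and $\phi(0):=\phi(p)$. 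To finish I must show $\phi$ is radial: the same warped form, reparametrized by $d-r$, must extend smoothly across the second pole $q=\{r=d\}$ (the radial geodesics from $q$ being the very same curves), and matching the even-order vanishing at $r=d$ with the vanishing at $r=0$, using $s(d,\theta)=\delta_\kappa$ for all $\theta$ and $\s_\kappa(\delta_\kappa-s)=-\s_\kappa'(\delta_\kappa)\s_\kappa(s)$ (which is where the symmetry $\kappa(s)=\kappa(\delta_\kappa-s)$ enters), one sees this is possible only if $\phi(\cdot,\theta)$ is $\theta$-independent; then $\phi(x)=\Phi_V(r_p(x))$, $s(r,\theta)=s(r)=\int_0^r\e^{-2\phi(t)/(n-1)}\d t$, $s(d)=s_p(q)=\delta_\kappa$, and $g$ has the asserted form. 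I expect the genuinely delicate steps to be (a) this last smoothness-at-$q$ argument ruling out angular dependence of $\phi$, and (b) the careful barrier/strong-maximum-principle bookkeeping through the cut locus in the second paragraph.
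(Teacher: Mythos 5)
Your overall architecture coincides with the paper's: the antisymmetry $\cot_{\kappa}(s)+\cot_{\kappa}(\delta_{\kappa}-s)=0$ (the paper gets this from Lemma~\ref{lem:SymJacob}; your Wronskian argument is an acceptable substitute), the estimate $\Delta_V(r_p+r_q)\leq 0$ in the barrier sense via Theorem~\ref{thm:GlobalLapComp} and $s_p+s_q\geq d^{\,h}(p,q)=\delta_{\kappa}$, the strong maximum principle giving $r_p+r_q\equiv d(p,q)$ and then $s_p+s_q\equiv\delta_{\kappa}$ with equality in the Laplacian comparison, and finally the rigidity chain (Cauchy--Schwarz on ${\rm Hess}\,r_p$, the factor $(m-1)/((n-1)(n-m))$, exclusion of $\Delta r_p\equiv 0$ near $p$) yielding $m=1$, the curvature identity \eqref{eq:Einstein}, umbilicity, and the warped-product form with an a priori $\theta$-dependent $\phi$. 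All of this matches Lemma~\ref{lem:RiccatiDiferIneq} and Lemma~\ref{lem:LaplacianComparisonconformal} and is fine.

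The gap is in your last step, the rotational symmetry of $\phi$, and you have correctly sensed that it is the delicate point --- but the mechanism you propose does not close it. Smooth extension across the second pole $q=\{r=d\}$ only constrains the leading behaviour of the warping function $F(r,\theta)=\e^{(\phi(r,\theta)+\phi(0))/(n-1)}\s_{\kappa}(s(r,\theta))$ as $r\to d$, and there one computes $F(r,\theta)\sim \e^{(\phi(0)-\phi(d,\theta))/(n-1)}(d-r)$ with $\phi(d,\theta)=\phi(q)$ automatically independent of $\theta$ (it is the value at the single point $q$). So the matching condition at $q$ reduces to $\phi(p)=\phi(q)$ and says nothing about the $\theta$-dependence of $\phi$ at intermediate radii; neither does the constraint $s(d,\theta)=\delta_{\kappa}$, which is only one integral identity per direction. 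The paper closes this differently: from $s_p+s_q\equiv\delta_{\kappa}=d^{\,h}(p,q)$ and the triangle inequality for $d^{\,h}$ one gets $d^{\,h}_p+d^{\,h}_q\equiv\delta_{\kappa}$ as well, hence $0\leq s_p(x)-d^{\,h}_p(x)=d^{\,h}_q(x)-s_q(x)\leq 0$, i.e.\ $s_p\equiv d^{\,h}_p$. Then Lemma~\ref{lem:Vindepenedent} applies: each minimal $g$-geodesic from $p$ realizes the $h$-distance, hence is an $h$-geodesic, and the conformal change of connection formula forces $\nabla\phi$ to be parallel to the radial direction, so $\phi$ is constant on each sphere $\partial B_r(p)$. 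You need this step (or an equivalent substitute); your smoothness-at-the-antipode argument cannot replace it as written.
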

\begin{corollary}\label{cor:ChengDiamSphere}
Suppose that $(M,g)$, $n> 1$, is  a complete Riemannian manifold and $\phi\in C^2(M)$. Fix $p,q\in M$.  
Assume that $\kappa$ is a positive constant and 
\eqref{eq:RiciLowBdd} 
holds for all $x\in ({\rm Cut}(p)\cup\{p\})^c$. 
We further assume that \eqref{eq:RiciLowBdd} by  replacing $p$ with $q$ 
holds for all $x\in ({\rm Cut}(q)\cup\{q\})^c$. 
If  $d^{\,h}(p,q)=
\pi/\sqrt{\kappa}$, 
then $m=1$,  
$\phi$ is rotationally symmetric around $p$, i.e., $\phi$ is a function depending only on radial $r$, 
and 
$g$ is a  warped product metric of the form 
\begin{align*}
g&=\d r^2+e^{\frac{\;2 \phi(r) +2\phi(0)\;}{n-1}}\cdot
\frac{\;\sin^2({\sqrt\kappa}(s(r)))\;
}{\kappa}
g_{\mathbb{S}^{n-1}}, \quad 0\leq r\leq d(p,q),
\end{align*}
where $s(r)=\int_0^re^{-\frac{\;2 \phi(t)\;}{n-1}}\d t$ 
and $s(d(p,q))=
\pi/\sqrt{\kappa}$.  
\end{corollary}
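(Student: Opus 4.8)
The plan is to read off Corollary~\ref{cor:ChengDiamSphere} as the constant-curvature special case of Theorem~\ref{thm:ChengDiamSphere}; no new geometric argument is needed, only a verification of hypotheses together with the explicit form of the relevant Jacobi solution.

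First I would check that a positive constant $\kappa$ satisfies every structural assumption imposed on the curvature function in Theorem~\ref{thm:ChengDiamSphere}. By the computation recorded after \eqref{eq:RiccatiEq} one has $\delta_\kappa=\pi/\sqrt{\kappa}<\infty$; the constant function $\kappa$ is trivially positive on $]0,\delta_\kappa[$; and the reflection symmetry $\kappa(s)=\kappa(\delta_\kappa-s)$ for all $s\in[0,\delta_\kappa]$ holds trivially because $\kappa$ is constant. Hence the hypothesis $d^{\,h}(p,q)=\pi/\sqrt{\kappa}$ is exactly the maximal-diameter hypothesis $d^{\,h}(p,q)=\delta_\kappa$ of the theorem, while the remaining data of the corollary ($n>1$, completeness, $\phi\in C^2(M)$, and the curvature lower bounds \eqref{eq:RiciLowBdd} relative to $p$ and, separately, relative to $q$) are literally those of Theorem~\ref{thm:ChengDiamSphere}.

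Next I would invoke Theorem~\ref{thm:ChengDiamSphere} directly to conclude $m=1$, the rotational symmetry of $\phi$ around $p$ (so that $\phi$ is a function of the radial variable $r=r_p$ alone), and the warped-product representation
\[
g=\d r^2+e^{\frac{2\phi(r)+2\phi(0)}{n-1}}\,\s_\kappa^2(s(r))\,g_{\mathbb{S}^{n-1}},\qquad 0\le r\le d(p,q),
\]
with $s(r)=\int_0^r e^{-\frac{2\phi(t)}{n-1}}\,\d t$ and $s(d(p,q))=\delta_\kappa=\pi/\sqrt{\kappa}$ (note $n-m=n-1$ once $m=1$). Finally, since $\kappa$ is a positive constant, the solution of $\s_\kappa''+\kappa\,\s_\kappa=0$ with $\s_\kappa(0)=0$, $\s_\kappa'(0)=1$ is $\s_\kappa(s)=\sin(\sqrt{\kappa}\,s)/\sqrt{\kappa}$, so $\s_\kappa^2(s(r))=\sin^2(\sqrt{\kappa}\,s(r))/\kappa$; substituting this into the displayed metric produces exactly the form asserted in the corollary. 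The ``hard part'' is therefore vacuous: all of the substance — forcing $m=1$, the rigidity of $\phi$, and the recovery of the warped product — is already contained in Theorem~\ref{thm:ChengDiamSphere}, and the present step consists only of the hypothesis check above and the explicit constant-curvature evaluation of $\s_\kappa$.
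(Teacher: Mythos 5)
Your proposal is correct and is exactly what the paper intends: the corollary is the constant-curvature specialization of Theorem~\ref{thm:ChengDiamSphere}, and the paper itself gives no separate proof, treating it as immediate. Your hypothesis check ($\delta_\kappa=\pi/\sqrt{\kappa}$, trivial positivity and reflection symmetry of a constant $\kappa$) and the evaluation $\s_\kappa(s)=\sin(\sqrt{\kappa}\,s)/\sqrt{\kappa}$ are precisely the required verifications.
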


\begin{theorem}[Cheeger-Gromoll Splitting Theorem]\label{thm:Splitting}
Let $(M,g)$ be an $n$-dimen\-sional non-compact complete Riemannian manifold and $V$ a $C^1$-vector field. 
Suppose that $(M,g,V)$ is $(V,m)$-complete  
and $M$ contains a line. 
Then under ${\rm CD}(0,m)$-condition
with $m<1$, $M$ is isometric to $\R\times N$ and $V$ depends only on $N$.
\end{theorem}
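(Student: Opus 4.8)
\noindent The plan is to run the Cheeger--Gromoll scheme with the reparametrized distance $s_p$ in place of the Riemannian distance, following the gradient case treated in \cite{KL}; the new input is the Laplacian comparison of Theorem~\ref{thm:GlobalLapComp} specialized to $\kappa\equiv0$, for which $\cot_0(s)=1/s$ and hence
\begin{align*}
(\Delta_V r_p)(x)\le\frac{(n-m)\,C_p}{s_p(x)}\,e^{-\frac{2\phi_V(x)}{n-m}}\qquad\text{for }x\notin {\rm Cut}(p)\cup\{p\}.
\end{align*}
Let $\gamma:\R\to M$ be a line, set $o:=\gamma_0$, and let $\gamma^+,\gamma^-$ be the two rays issuing from $o$. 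For each ray $\sigma$ from $o$ I would build a reparametrized Busemann function $b_\sigma$ as a limit of the functions $x\mapsto s_{\sigma_t}(x)-s_{\sigma_t}(o)$ as $t\to+\infty$, with the constants $C_{\sigma_t}$ normalized compatibly along $\sigma$. Convergence, local Lipschitz continuity, an explicit formula for $b_\sigma$ along $\sigma$ itself, and the two relations $b^++b^-\ge0$ on $M$ and $b^++b^-\equiv0$ along $\gamma$ (writing $b^\pm:=b_{\gamma^\pm}$) are to be obtained by a careful analysis along minimizing geodesics, where $s$ is an honest integral of the reparametrizing density, together with the $(V,m)$-completeness hypothesis, which gives $s(p,q_i)\to+\infty$ whenever $d(p,q_i)\to+\infty$ (Lemma~\ref{lem:phimcomplete}).

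Next I would show that each of $b^+,b^-$ is, in the barrier (viscosity) sense, a supersolution of the weighted elliptic operator that is the reparametrized analogue of $\Delta_V$: a lower barrier at a point $x$ built from a forward ray, together with $s_{\sigma_t}(x)\to+\infty$ (Lemma~\ref{lem:phimcomplete}), makes the obstruction term $\sim1/s_{\sigma_t}(x)$ tend to $0$ along the barrier. Hence $b^++b^-$ is a nonnegative supersolution vanishing at the interior point $o$, the strong maximum principle forces $b:=b^+=-b^-$, and $b$ --- being simultaneously a super- and a subsolution of a uniformly elliptic operator with locally bounded coefficients --- is smooth and solves the corresponding homogeneous equation classically. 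I would then apply the weighted Bochner identity along the integral curves of $\nabla b$: since the equation is homogeneous and ${\rm Ric}_{m,n}(\Delta_V)\ge0$, every term on the right-hand side of the Bochner formula must vanish.

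This is where $m<1$ is used. The Bochner equality forces the (reparametrized) Hessian of $b$ to vanish, and the nonnegative term $\frac{1}{n-m}\langle V,\nabla b\rangle^2$ that appears when ${\rm Ric}_{m,n}(\Delta_V)$ is written in terms of ${\rm Ric}+\frac12\mathcal{L}_Vg$ forces $\langle V,\nabla b\rangle\equiv0$, hence $\phi_V$ constant along the flow of $\nabla b$; the strict inequality $m<1$ is exactly what excludes the $m=1$ warped-product alternative of \cite{Wylie:WarpedSplitting} and leaves only the isometric one. After rescaling $b$ by a constant, $\nabla b$ is then a unit parallel vector field, its flow furnishes an isometry $M\cong\R\times N$ with $\nabla b=\partial_r$, and because $V$ has no $\partial_r$-component and $\phi_V$ does not depend on $r$, the vector field $V$ is the pullback of a vector field on $N$.

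The step I expect to be the main obstacle is the first one. Since $s(p,q)$ is neither symmetric nor subadditive, the classical Busemann machinery does not apply directly, and --- unlike in \cite{KL} and \cite{WylieYeroshkin}, where for $V=\nabla\phi$ one may pass to the conformal metric $e^{-4\phi/(n-m)}g$ --- for a general $C^1$-vector field there is no conformal change of metric to fall back on, so the existence of the limits defining $b^\pm$ and the inequality $b^++b^-\ge0$ must be extracted directly from the behaviour of $s$ along minimizing geodesics and from the $(V,m)$-completeness assumption. Once this foundation is secured, the supersolution property, the maximum principle, and the Bochner rigidity proceed essentially as in the gradient case.
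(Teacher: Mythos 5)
Your proposal hinges on constructing a \emph{reparametrized} Busemann function as a limit of $x\mapsto s_{\sigma_t}(x)-s_{\sigma_t}(o)$, and you correctly identify that this is the problematic step: since $s(\cdot,\cdot)$ is neither symmetric nor subadditive, neither the monotonicity that guarantees the limit exists nor the sign of $b^++b^-$ can be extracted from it, and you do not supply a substitute argument. This is a genuine gap, and the paper avoids it entirely by working with the \emph{ordinary} Busemann function $b_\gamma(x)=\lim_{t\to+\infty}(t-d(x,\gamma_t))$ for the Riemannian distance $d$, for which existence, the Lipschitz property, and $b^++b^-\le0$ with equality on the line are classical. The only new input needed is Lemma~\ref{lem:LaplacianCompAlongGeo}: the $\kappa\equiv0$ comparison gives $(\Delta_V r_{\eta(r)})(p)\le (n-m)\bigl(\int_0^{d(p,\eta(r))}\exp(-\tfrac{2V_\eta(u)}{n-m})\,\d u\bigr)^{-1}$ along asymptotic rays, and $(V,m)$-completeness makes this bound tend to $0$ as $r\to+\infty$, so the standard support functions $r-d(x,\eta(r))+b_\gamma(p)$ already witness $\Delta_V$-subharmonicity of $b^\pm$ in the barrier sense (Lemma~\ref{lem:subharmonic}). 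In other words, the reparametrization is needed only inside the Laplacian estimate, not in the definition of the Busemann functions; rebuilding the Busemann theory for $s_p$ is both unnecessary and, as you note, obstructed.

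A secondary inaccuracy is in the rigidity step. The Bochner equality does not force the Hessian of $b$ to vanish, nor does a ``nonnegative term $\frac{1}{n-m}\langle V,\nabla b\rangle^2$'' appear with a favorable sign: in the Bochner identity used here that term enters with a \emph{minus} sign, which is precisely why the trace inequality $|\nabla^2 b|^2\ge(\Delta b)^2/(n-1)\ge(\Delta b)^2/(n-m)$ for $m\le1$ is needed. What the paper actually obtains (via \cite[Lemmas~6.5, 6.6]{Wylie:WarpedSplitting}) is that ${\rm Hess}\,b$ has $n-1$ equal eigenvalues and $g$ is a twisted product $g=\d r^2+e^{2\phi/(n-1)}g_N$ with $V=\frac{\partial\phi}{\partial r}\partial_r+U$; only then does $m<1$ enter, through the identity ${\rm Ric}_{m,n}(\partial_r,\partial_r)={\rm Ric}_{1,n}(\partial_r,\partial_r)+\frac{m-1}{(n-1)(n-m)}(\d\phi/\d r)^2$ sandwiched between $0$ and $0$, which kills $\d\phi/\d r$ and collapses the warped product to an isometric one. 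Your intuition that a sign-definite square is being forced to vanish by $m<1$ is right in spirit, but the quantity and the mechanism are different from what you wrote.
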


\begin{corollary}\label{cor:Splitting}
Let $(M,g)$ be an $n$-dimensional non-compact complete Riemannian manifold and $C^1$-vector field  $V$.
Suppose that $V_{\gamma}\leq0$ 
for any unit speed geodesic $\gamma$ 
and $M$ contains a line. 
Then under ${\rm CD}(0,m)$-condition
with $m<1$, we have that $M$ is isometric to $\R\times N$ and $V$ depends only on $N$.
\end{corollary}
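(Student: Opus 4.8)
The plan is to obtain Corollary~\ref{cor:Splitting} as an immediate consequence of Theorem~\ref{thm:Splitting}. The hypotheses of the two statements are identical --- a non-compact complete $(M,g)$, a $C^1$-vector field $V$, the presence of a line, and the ${\rm CD}(0,m)$-condition with $m<1$ --- except that Theorem~\ref{thm:Splitting} demands that $(M,g,V)$ be $(V,m)$-complete, whereas here we are only given that $V_{\gamma}\leq0$ for every unit speed geodesic $\gamma$. So the single point to establish is that this sign condition already forces $(V,m)$-completeness, after which Theorem~\ref{thm:Splitting} applies word for word.

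To check this, fix $p\in M$. Under the standing conventions we have $n>m$, hence $n-m>0$, so from $V_{\gamma}(t)=\int_0^t\langle V_{\gamma_s},\dot\gamma_s\rangle\,\d s\leq0$ we get $-\frac{2V_{\gamma}(t)}{n-m}\geq0$, and therefore $e^{-\frac{2V_{\gamma}(t)}{n-m}}\geq1$ for every $t\geq0$ and every unit speed geodesic $\gamma$ with $\gamma_0=p$. Consequently, for any minimizing unit speed geodesic $\gamma$ with $\gamma_0=p$ and $L(\gamma)=r$ one has
\[
\int_0^r e^{-\frac{2V_{\gamma}(t)}{n-m}}\,\d t\ \geq\ r ,
\]
hence $\inf_{L(\gamma)=r}\int_0^r e^{-\frac{2V_{\gamma}(t)}{n-m}}\,\d t\geq r\to+\infty$ as $r\to+\infty$, so the $\varlimsup$ in \eqref{eq:phimcomplete} equals $+\infty$. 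Since $p\in M$ was arbitrary, $(M,g,V)$ is $(V,m)$-complete. (This is merely the special case of Remark~\ref{rem:VMcomplete}\,(1) corresponding to the uniform upper bound $V_{\gamma}\leq0$, so in fact the verification is a one-liner.)

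With $(V,m)$-completeness in hand, Theorem~\ref{thm:Splitting} yields that $M$ is isometric to $\R\times N$ with $V$ depending only on $N$, which is precisely the assertion of the corollary. There is essentially no obstacle at this level: all of the substantive content --- constructing the Busemann functions of the line, invoking the Laplacian comparison Theorem~\ref{thm:GlobalLapComp} with $\kappa\equiv0$ to make them $\Delta_V$-subharmonic, splitting off the $\R$-factor by the strong maximum principle, and verifying that $V$ carries no component along that factor --- resides in the proof of Theorem~\ref{thm:Splitting}, which we are entitled to use. The only minor caveat worth recording is the degenerate case $V\equiv0$, where $V_{\gamma}\equiv0\leq0$ holds trivially and the statement reduces to the classical Cheeger-Gromoll splitting theorem.
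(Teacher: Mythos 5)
Your proposal is correct and follows exactly the paper's own argument: the paper likewise observes that $V_{\gamma}\leq0$ forces $(V,m)$-completeness (via $e^{-2V_{\gamma}(t)/(n-m)}\geq1$, as in Remark~\ref{rem:VMcomplete}(1)) and then cites Theorem~\ref{thm:Splitting}. Your write-up merely makes the one-line verification explicit.
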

\begin{proof}
If $V_{\gamma}\leq0$ 
for any unit speed geodesic $\gamma$, then $(M,g,V)$ is $(V,m)$-complete for all $m\leq1$. 
So the assertion easily follows Theorem~\ref{thm:Splitting}. 
\end{proof}

\begin{remark}
{\rm Theorem~\ref{thm:Splitting} partially extends  \cite[Corollary~6.7]{Wylie:WarpedSplitting} for a restricted case, where 
${\rm CD}(0,m)$-condition 
for $m<1$ and 
$(V,1)$-completeness of $(M,g,V)$ are assumed  
for the isometric splitting $M=\R\times N$. 
Note that 
the $(V,m)$-completeness does not necessarily mean 
the $(V,1)$-completeness, and it 
is weaker than $(V,1)$-completeness if $V_{\gamma}\geq0$ for any unit speed geodesic $\gamma$.  
} 
\end{remark}

\section{Proof of Theorem~\ref{thm:GlobalLapComp}}
Recall the $V$-Laplacian $\Delta_Vu:=\Delta u-\langle V,\nabla u\rangle $.  Letting $\lambda(r,\theta)= C_p^{-1} 
e^{\frac{\;2V_{\gamma}(r)\;}{n-m}}\Delta_Vr_p(r,\theta)$, we find that 
$\lambda$ satisfies the Riccati differential inequality in terms of the parameter $s$.

\begin{lemma}\label{lem:RiccatiDiferIneq}
Let $\gamma$ be a unit speed minimal geodesic with $\gamma_0=p$ and $\dot{\gamma}_0=\theta$. 
Let $s$ be the 
parameter $\d s= C_p e^{-\frac{\;2V_{\gamma}(r)\;}{n-m}}\d r$. Then 
\begin{align}
\frac{\d\lambda}{\d s}\leq -\frac{\lambda^2}{n-m}-
 C_p^{-2} e^{\frac{\;4V_{\gamma}(r)\;}{n-m}}\text{
\rm Ric}_{m,n}(\Delta_V)\left(\dot{\gamma}_r,\dot{\gamma}_r \right)\label{eq:lambda/ds}
\end{align}
in particular, 
\begin{align}
\frac{\d\lambda}{\d r}\leq -  C_p e^{-\frac{\;2V_{\gamma}(r)\;}{n-m}}
\frac{\;\lambda^2\;}{n-m}- C_p^{-1} e^{\frac{\;2V_{\gamma}(r)\;}{n-m}}\text{
\rm Ric}_{m,n}(\Delta_V)\left(\dot{\gamma}_r,
\dot{\gamma}_r \right)\label{eq:lambda/dr}
\end{align}
holds for $x=(r,\theta)\notin \text{\rm Cut}(p)\cup\{p\}$. 
Moreover, if equality is achieved at a point, then $m=1$ and at that point $\nabla_{\nabla r_p}$ has at most one non-zero eigenvalue which is of multiplicity $n-1$. 
\end{lemma}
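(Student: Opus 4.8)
The plan is to derive \eqref{eq:lambda/ds} from the classical Bochner–type Riccati inequality for the Laplacian along a geodesic, and then convert it to the $s$-parameter by an explicit change of variables. First I would recall the standard computation: writing $\gamma$ for a unit speed minimal geodesic with $\gamma_0=p$, $\dot\gamma_0=\theta$, and setting $\rho(r):=\Delta r_p(\gamma_r)$, the Bochner formula (or the second variation / Riccati equation for the shape operator of geodesic spheres) gives
\begin{align*}
\frac{\d\rho}{\d r}\leq -\frac{\rho^2}{n-1}-\Ric(\dot\gamma_r,\dot\gamma_r)
\end{align*}
for $r<r_p$-cut value, with equality exactly when the Hessian of $r_p$ restricted to $\dot\gamma_r^{\perp}$ is a multiple of the identity, i.e. $\nabla_{\nabla r_p}\nabla r_p$ has a single eigenvalue of multiplicity $n-1$. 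Then I would incorporate the drift: since $\Delta_V r_p=\Delta r_p-\langle V,\nabla r_p\rangle=\rho-V r_p$ and $\frac{\d}{\d r}(V r_p)(\gamma_r)=\frac{\d}{\d r}\langle V_{\gamma_r},\dot\gamma_r\rangle=(\nabla_{\dot\gamma_r}V\cdot\dot\gamma_r)=\frac12\mathcal L_Vg(\dot\gamma_r,\dot\gamma_r)$ along the geodesic, one gets
\begin{align*}
\frac{\d}{\d r}(\Delta_V r_p)(\gamma_r)\leq -\frac{\rho^2}{n-1}-\Ric(\dot\gamma_r,\dot\gamma_r)-\tfrac12\mathcal L_Vg(\dot\gamma_r,\dot\gamma_r).
\end{align*}

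The key algebraic step is to absorb the extra $-\tfrac12\mathcal L_Vg$ and the mismatch between $n-1$ and $n-m$ using the definition of $\Ric_{m,n}(\Delta_V)$ and a Cauchy–Schwarz/Young inequality. Write $h:=V r_p(\gamma_r)=V_\gamma'(r)$ so that $\Delta_V r_p(\gamma_r)=\rho-h$. The inequality for $\Ric_{m,n}(\Delta_V)$ reads $\Ric+\tfrac12\mathcal L_Vg\geq \Ric_{m,n}(\Delta_V)+\frac{h^2}{m-n}=\Ric_{m,n}(\Delta_V)-\frac{h^2}{n-m}$. Substituting,
\begin{align*}
\frac{\d}{\d r}(\Delta_V r_p)(\gamma_r)\leq -\frac{\rho^2}{n-1}-\Ric_{m,n}(\Delta_V)(\dot\gamma_r,\dot\gamma_r)+\frac{h^2}{n-m}.
\end{align*}
Next one checks the elementary inequality $\frac{\rho^2}{n-1}-\frac{h^2}{n-m}\geq \frac{(\rho-h)^2}{n-m}$ for $n-1\geq n-m$, i.e. $m\geq1$; but here $m\leq1$, so the relevant direction is $\frac{\rho^2}{n-1}-\frac{h^2}{n-m}\geq\frac{(\rho-h)^2}{n-m}$ precisely when $m\leq 1$ (this is where $m\leq1$, not $m\geq n$, enters — one expands and reduces to $\bigl(\frac{1}{n-1}-\frac{1}{n-m}\bigr)\rho^2+\frac{2h\rho}{n-m}-\frac{2h^2}{n-m}$ being nonnegative after completing the square, using $m\leq1\le n$). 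Equality in this step forces $m=1$ and a specific proportionality between $\rho$ and $h$. This yields
\begin{align*}
\frac{\d}{\d r}(\Delta_V r_p)(\gamma_r)\leq -\frac{(\Delta_V r_p)^2}{n-m}-\Ric_{m,n}(\Delta_V)(\dot\gamma_r,\dot\gamma_r).
\end{align*}

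Finally I would perform the substitution $\lambda= C_p^{-1}e^{\frac{2V_\gamma(r)}{n-m}}\Delta_V r_p$ and $\d s= C_p e^{-\frac{2V_\gamma(r)}{n-m}}\d r$. By the product rule, $\frac{\d\lambda}{\d r}= C_p^{-1}e^{\frac{2V_\gamma(r)}{n-m}}\bigl(\frac{2h}{n-m}\Delta_V r_p+\frac{\d}{\d r}\Delta_V r_p\bigr)$, where I have used $V_\gamma'(r)=h$. Here, however, the straightforward substitution would leave an unwanted cross term $\frac{2h}{n-m}\Delta_V r_p$; the point is that this is exactly the term generated when re-expanding, and the cleanest route is to combine the two elementary inequalities above \emph{before} differentiating — that is, to work with $\Delta_V r_p$ directly but track the drift terms so that after dividing by the Jacobian factor the $h$-dependence cancels. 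Concretely, $\frac{\d\lambda}{\d s}=\frac{1}{C_p}e^{\frac{2V_\gamma(r)}{n-m}}\cdot\frac{1}{C_p}e^{\frac{2V_\gamma(r)}{n-m}}\bigl(\frac{2h}{n-m}\Delta_V r_p+\frac{\d}{\d r}\Delta_V r_p\bigr)$, and one verifies that the bound on $\frac{\d}{\d r}\Delta_V r_p$ must actually be sharpened to include a $-\frac{2h}{n-m}\Delta_V r_p$ term that exactly cancels it; this comes from redoing the Young-inequality step retaining all cross terms, i.e. from $\frac{\rho^2}{n-1}\ge\frac{(\rho-h)^2}{n-m}+\frac{2h(\rho-h)}{n-m}+\frac{h^2}{n-m}$ rearranged appropriately. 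The upshot is
\begin{align*}
\frac{\d\lambda}{\d s}\leq -\frac{\lambda^2}{n-m}- C_p^{-2}e^{\frac{4V_\gamma(r)}{n-m}}\Ric_{m,n}(\Delta_V)(\dot\gamma_r,\dot\gamma_r),
\end{align*}
which is \eqref{eq:lambda/ds}; multiplying back by $\d s/\d r= C_p e^{-\frac{2V_\gamma(r)}{n-m}}$ gives \eqref{eq:lambda/dr}. The rigidity statement is read off by tracing the two equality cases: equality in the classical Riccati step forces $\nabla_{\nabla r_p}$ to have a single eigenvalue of multiplicity $n-1$ on $\dot\gamma_r^\perp$, and equality in the Young inequality forces $m=1$. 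The main obstacle is getting the bookkeeping of the drift cross-terms exactly right so that the $h$-terms cancel cleanly under the change of variable; that is a careful but routine computation, and the conceptual content is entirely in the single scalar inequality relating $\frac{\rho^2}{n-1}$, $\frac{h^2}{n-m}$ and $\frac{(\rho-h)^2}{n-m}$ under $m\leq 1$.
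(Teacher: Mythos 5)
Your route is essentially the paper's: Bochner--Weitzenb\"ock applied to $r_p$, the Cauchy--Schwarz bound $|{\rm Hess}\,r_p|^2\geq(\Delta r_p)^2/(n-1)$ after checking $\nabla_{\nabla r_p}\nabla r_p=0$, the use of $m\leq1$ only through $\tfrac{1}{n-1}\geq\tfrac{1}{n-m}$, and then the substitution $\lambda=C_p^{-1}e^{2V_\gamma(r)/(n-m)}\Delta_Vr_p$ with $\d s=C_pe^{-2V_\gamma(r)/(n-m)}\d r$, under which the cross term $\tfrac{2h}{n-m}\Delta_Vr_p$ coming from differentiating the weight combines with $-\tfrac{\rho^2}{n-m}+\tfrac{h^2}{n-m}$ to give exactly $-\tfrac{(\rho-h)^2}{n-m}=-\tfrac{(\Delta_Vr_p)^2}{n-m}$. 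Your final paragraph is precisely the paper's computation. However, the ``elementary inequality'' you display in the middle,
\begin{align*}
\frac{\rho^2}{n-1}-\frac{h^2}{n-m}\;\geq\;\frac{(\rho-h)^2}{n-m},
\end{align*}
is false: take $\rho=0$ and $h\neq0$. Accordingly, the intermediate conclusion $\frac{\d}{\d r}(\Delta_Vr_p)\leq-\frac{(\Delta_Vr_p)^2}{n-m}-\Ric_{m,n}(\Delta_V)(\dot\gamma_r,\dot\gamma_r)$ does \emph{not} hold in the $r$-parameter; this is exactly why the clean Riccati inequality is only available in the $s$-parameter, and why \eqref{eq:lambda/dr} carries the factor $C_pe^{-2V_\gamma(r)/(n-m)}$ in front of $\lambda^2$ rather than being \eqref{eq:lambda/ds} with $r$ in place of $s$. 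The only inequality actually needed is $\frac{\rho^2}{n-1}\geq\frac{\rho^2}{n-m}$; the rest, $\frac{2h(\rho-h)}{n-m}-\frac{\rho^2}{n-m}+\frac{h^2}{n-m}=-\frac{(\rho-h)^2}{n-m}$, is an identity, not an estimate. In a write-up, delete the false display and keep only the corrected bookkeeping.

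This also affects your rigidity discussion. With the corrected argument there is no ``proportionality between $\rho$ and $h$'' to extract (an identity has no equality case); equality forces $\frac{1-m}{(n-1)(n-m)}(\Delta r_p)^2=0$ together with equality in Cauchy--Schwarz. So you get $m=1$ \emph{or} $\Delta r_p=0$ at the point, and you must rule out the second alternative — the paper does this by invoking the classical Laplacian comparison under a local upper sectional curvature bound, which gives $\Delta r_p>0$ near $p$. Your sketch is silent on this case and should address it; the multiplicity-$(n-1)$ statement then follows from the Cauchy--Schwarz equality as you say.
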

\begin{proof} We modify the proof of the Laplacian comparison theorem on weighted complete Riemannian manifolds with the CD$(K, 1)$-condition by Wylie and Yeroshkin \cite{WylieYeroshkin}. 
The usual Bochner-Weitzenb\"ock formula for functions says that for any $u\in C^3(M)$,
\begin{align*}
\frac12\Delta |\nabla u|^2=|\nabla^2\,u|^2+{\Ric}(\nabla u,\nabla u)+\langle \nabla\Delta u,\nabla u\rangle .
\end{align*}
The Bochner-Weitzenb\"ock formula for the $V$-Laplacian and the $m$-Bakry-\'Emery 
Ricci curvature is given by 
\begin{align*}
\frac12 \Delta_V|\nabla u|^2&=|\nabla^2\,u|^2+{\Ric}_{\infty,n}(\Delta_V)(\nabla u,\nabla u)+\langle \nabla \Delta_V u,\nabla u\rangle \\ 
&=|\nabla^2\,u|^2+{\Ric}_{m,n}(\Delta_V)(\nabla u,\nabla u)-\frac{\;V^*\otimes V^*\;}{n-m}(\nabla u,\nabla u)+\langle \nabla \Delta_V u,\nabla u\rangle .
\end{align*}
Consider this equation with $u=r_p$ at an interior point of a minimizing geodesic 
(so that $r_p$ is smooth in a neighborhood). Then $|\nabla r_p|=1$ in this neighborhood, 
so that the left hand side is zero. 
Now we claim $\nabla_{\nabla r_p}\nabla r_p=0$
i.e., 
$\nabla r_p$ is a null vector for $\nabla_{\nabla r_p}$. 
For this, it suffices to show that for any smooth vector field $X$ on $M\setminus\{p\}$
\begin{align}
\langle \nabla_{\nabla r_p}\nabla r_p,X\rangle =0.\label{eq:Nablaradial}
\end{align}
This is true if $X$ is parallel to $\nabla r_p$, because 
for $f\in C^{\infty}(M\setminus\{p\})$ 
\begin{align*}
\langle \nabla_{\nabla_{r_p}}\!\nabla r_p,f\nabla r_p\rangle &=f
\langle \nabla_{\nabla_{r_p}}\!\nabla r_p,\nabla r_p\rangle =
f\frac12(\nabla r_p)|\nabla r_p|^2=0.
\end{align*}
Moreover, \eqref{eq:Nablaradial} holds if $X$ is vertical to 
$\nabla r_p$, because 
\begin{align*}
\langle \nabla_{\nabla r_p}\nabla r_p,X\rangle &=\frac12(\nabla r_p)\langle \nabla r_p,X\rangle =\frac12(\nabla r_p)0=0.
\end{align*}
Hence $\nabla_{\nabla r_p}$ has at most $n-1$ non-zero eigenvalues and by the Cauchy-Schwarz inequality, it holds on $({\rm Cut}(p)\cup \{p\})^c$ that  (see \cite{WylieYeroshkin})
\begin{align}
|{\rm Hess}\;r_p|^2=\|\nabla_{\nabla r_p}\|^2\geq \frac{\;(\Delta r_p)^2\;}{n-1}.\label{eq:HessTrace}
\end{align}
Now $m\leq 1$. Hence
\begin{align}
0&\geq \frac{\;(\Delta r_p)^2\;}{n-m}+{\Ric}_{m,n}(\Delta_V)
\left(\nabla r_p,\nabla r_p \right)-\frac{1}{\;n-m\;}|\langle V,\nabla r_p\rangle |^2+\langle \nabla \Delta_Vr_p,\nabla r_p\rangle .\label{eq:Bochner}
\end{align}
This gives us the following inequality  along $\gamma$, 
\begin{align}
\frac{\d}{\d r}(\Delta_Vr_p)(r,\theta)&\leq -\frac{\;(\Delta r_p(r,\theta))^2\;}{n-m}-{\Ric}_{m,n}(\Delta_V)
\left(\dot{\gamma}_r,\dot{\gamma}_r \right)+\frac{1}{\;n-m\;}|\langle V,\nabla r_p\rangle (r,\theta)|^2.\label{eq:equaitonalonggama}
\end{align}
From this, we have
\begin{align*}
\frac{\d\lambda}{\d s}&= C_p^{-1} e^{\frac{\;2V_{\gamma}(r)\;}{n-m}}\frac{\d\lambda}{\d r}\\
&= C_p^{-2} e^{\frac{\;2V_{\gamma}(r)\;}{n-m}}\left\{\left(\frac{\d}{\d r}e^{\frac{\;2V_{\gamma}(r)\;}{n-m}} \right)\Delta_Vr_p(r,\theta) + e^{\frac{\;2V_{\gamma}(r)\;}{n-m}} \frac{\d}{\d r} \Delta_Vr_p(r,\theta)\right\}\\
&=  C_p^{-2} e^{\frac{\;2V_{\gamma}(r)\;}{n-m}}\left\{e^{\frac{\;2V_{\gamma}(r)\;}{n-m}}\frac{2}{\;n-m\;}\cdot\frac{\;\partial V_{\gamma}(r)\;}{\partial r}\cdot \Delta_Vr_p(r,\theta)+e^{\frac{\;2V_{\gamma}(r)\;}{n-m}} \frac{\d}{\d r} \Delta_Vr_p(r,\theta)\right\}\\
&= C_p^{-2} e^{\frac{\;4V_{\gamma}(r)\;}{n-m}}\left\{ \frac{2}{\;n-m\;}\cdot\frac{\;\partial V_{\gamma}(r)\;}{\partial r}\cdot \Delta_Vr_p(r,\theta)+\frac{\d}{\d r} \Delta_Vr_p(r,\theta)\right\}
\end{align*}
\begin{align*}
&\leq \frac{ C_p^{-2} }{n-m}e^{\frac{\;4V_{\gamma}(r)}{n-m}\;}\left\{2\frac{\;\partial V_{\gamma}(r)\;}{\partial r}\Delta_Vr_p(r,\theta)-(\Delta r_p(r,\theta))^2 +|\langle V,\nabla r_p\rangle (r,\theta)|^2\right\}\\
&\hspace{5cm}- C_p^{-2} e^{\frac{\;4V_{\gamma}(r)\;}{n-m}}{\Ric}_{m,n}(\Delta_V)\left(\dot{\gamma}_r,\dot{\gamma}_r\right) \\
&= -\frac{ C_p^{-2} }{\;n-m\;}e^{\frac{\;4V_{\gamma}(r)\;}{n-m}}(\Delta_Vr_p(r,\theta))^2- C_p^{-2} e^{\frac{\;4V_{\gamma}(r)\;}{n-m}}{\Ric}_{m,n}(\Delta_V)\left(\dot{\gamma}_r,\dot{\gamma}_r\right)
\\
&= -\frac{1}{\;n-m\;}\left( C_p^{-1} e^{\frac{\;2V_{\gamma}(r)\;}{n-m}}\Delta_Vr_p(r,\theta)\right)^2-
 C_p^{-2} e^{\frac{\;4V_{\gamma}(r)\;}{n-m}}{\Ric}_{m,n}(\Delta_V)\left(\dot{\gamma}_r,\dot{\gamma}_r\right)\\
&=-\frac{\lambda^2}{\;n-m\;}-
 C_p^{-2} e^{\frac{\;4V_{\gamma}(r)\;}{n-m}}{\Ric}_{m,n}(\Delta_V)\left(\dot{\gamma}_r,\dot{\gamma}_r\right).
\end{align*}
Here we use  \eqref{eq:equaitonalonggama} at the inequality above and use 
$\Delta_Vr_p=\Delta r_p-\langle V, \nabla r_p\rangle $ in the next equality. 
If the equality holds for \eqref{eq:lambda/ds} at some $x=(r_0,\theta)\notin{\rm Cut}(p)\cup \{p\}$, then the equality for 
\eqref{eq:equaitonalonggama} equivalently the equality for \eqref{eq:Bochner} at $x\notin {\rm Cut}(p)\cup \{p\}$ holds, i.e.,   
\begin{align*}
0&=\frac{\;(\Delta r_p)^2\;}{n-m}+{\Ric}_{m,n}(\Delta_V)
\left(\nabla r_p,\nabla r_p \right)-\frac{1}{\;n-m\;}|\langle V,\nabla r_p\rangle |^2+\langle \nabla \Delta_V r_p,\nabla r_p\rangle \\
&\geq \frac{\;(\Delta r_p)^2\;}{n-1}+{\Ric}_{m,n}(\Delta_V)
\left(\nabla r_p,\nabla r_p \right)-\frac{1}{\;n-m\;}|\langle V,\nabla r_p\rangle |^2+\langle \nabla \Delta_Vr_p,\nabla r_p\rangle  
\end{align*}
holds at $x\notin {\rm Cut}(p)\cup \{p\}$. 
This and $m\leq1$ yield 
\begin{align*}
\frac{m-1}{\;(n-m)(n-1)\;}(\Delta r_p)^2(x)=0.
\end{align*}
Thus $m=1$ or $\Delta r_p(x)=0$. 
Since $M$ has an upper bound $\kappa_{\eps}>0$ of the sectional curvature on some $B_{\eps}(p)\subset {\rm Cut}(p)^c$, the usual Laplacian comparison theorem tells us that  $\Delta r_p(x)\geq (n-1)\sqrt{\kappa_{\eps}}\cot(\sqrt{\kappa_{\eps}}r_p(x))>0$ for $0<r_p(x)<\eps$. Therefore we obtain $m=1$, in particular, the equality for \eqref{eq:HessTrace} holds at $x$. This implies that 
$\nabla_{\nabla r_p}$ at $x$ has at most one non-zero eigenvalue of multiplicity $n-1$. 
\end{proof}
Let $\kappa$ be a continuous function on $[0,+\infty[$ with 
respect to the parameter $s$. 
Assuming the curvature bound ${\Ric}_{m,n}(\Delta_V)_x(\nabla r_p,\nabla r_p)\geq (n-m)\kappa(s_p(x)) {e^{-\frac{\;4\phi_V(x)\;}{n-m}}}  C_p^2  $ for  $s_p(x)<S$ with $x\notin {\rm Cut}(p)\cup\{p\}$, we see 
${\Ric}_{m,n}(\Delta_V)(\dot{\gamma}_r,\dot{\gamma}_r)\geq (n-m)\kappa(s) {e^{-\frac{\;4V_{\gamma}(r)\;}{n-m}}}  C_p^2  $ for $s=s(r,\theta)<S$ with $0<r<d(p,{\rm Cut}(p))$.
From $(\ref{eq:lambda/ds})$ we have the usual Riccati inequality
\begin{align}
-\frac{\d\lambda}{\d s}(s)\geq (n-m)\kappa(s)+\frac{\lambda(s)^2}{\;n-m\;}\quad\text{ for }\quad s\in]0,S[\label{eq:RiccattiIneq}
\end{align}
with the caveat that it is in terms of the parameter $s$ instead of $r$. 
This gives us the following comparison estimate. 

\begin{lemma}
\label{lem:LaplacianComparisonconformal}
Suppose that $(M,g)$ be an $n$-dimensional complete Riemannian manifold and $V$ a $C^1$-vector field. Fix $R\in]0,+\infty[$ and $x,p\in M$. 
Assume that \eqref{eq:RiciLowBdd} holds for $r_p(x)<R$ 
with $x\notin({\rm Cut}(p)\cup\{p\})$.   
Let $\gamma$, $s$, and $\lambda$ be defined to be as in Lemma~\ref{lem:RiccatiDiferIneq}. Then 
\begin{align}
\lambda(r,\theta)\leq m_{\kappa}(s) \label{eq:LocalLapComp}
\end{align}
holds for $r<R$, $s<\delta_{\kappa}$ and $x=(r,\theta)\notin\text{\rm Cut}(p)\cup\{p\}$. 
Here 
\begin{align*}
s=s_p(r)=  C_p \int_0^r\exp\left(-\frac{\;2\phi_V(\gamma_t)\;}{n-m} \right)\d t.
\end{align*} 
Suppose further that the equality in 
\eqref{eq:LocalLapComp} holds for some $r_0<R$ with $s_0:=s(r_0)<\delta_{\kappa}$. 
We choose an orthonormal basis $\{e_i\}_{i=1}^n$ of $T_pM$ 
with $e_n=\dot{\gamma}_0$. 
Let $\{Y_i\}_{i=1}^{n-1}$ be the Jacobi fields along $\gamma$ with $Y_i(0)=o_p$ and $Y_i'(0)=e_i$.
Then we have $m=1$, 
and at $x=(r,\theta)$ with $r\leq r_0$, $\nabla_{\nabla r_p}$ has at most one non-zero eigenvalue which is of multiplicity $n-1$, 
 and for all $r\in]0,r_0]$ we have 
\begin{align}
\Ric_{1,n}(\Delta_V)(\dot\gamma_r,\dot\gamma_r)&=(n-1)\kappa(s_p(\gamma_r))e^{-\frac{\;4V_{\gamma}(r)\;}{n-1}}  C_p^2  .\label{eq:Einstein}
\end{align} 
Moreover, for all $i$ we have 
$Y_i(r)=C_p^{-1}F_{\kappa}(r)E_i(r)$ for $r\in[0,r_0]$, where 
\begin{align}
F_{\kappa}(r):=\exp\left(\frac{\;V_{\gamma}(r)\;}{n-1} \right)\s_{\kappa}(s_p(\gamma_r)),\label{eq:parallel}
\end{align}
and $\{E_i(r)\}_{i=1}^{n-1}$ are the parallel vector fields with $E_i(0)=e_i$. 
Consequently,
\begin{align}
g_{\gamma_r}&=dr^2+  C_p^{-2} e^{\frac{\;2V_{\gamma}(r)\;}{n-1}}\s_{\kappa}^2(s_p(\gamma_r))g_{\mathbb{S}^{n-1}}.\label{eq:roundshere1}
\end{align}
Here $g_{\mathbb{S}^{n-1}}$ is the standard metric on the sphere $\mathbb{S}^{n-1}$.
\end{lemma}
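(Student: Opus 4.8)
The plan is to derive \eqref{eq:LocalLapComp} from a Riccati comparison between the differential inequality \eqref{eq:RiccattiIneq} for $\lambda$ and the Riccati equation \eqref{eq:RiccatiEqM} for $m_\kappa=(n-m)\cot_\kappa$, carried out in the $s$-variable and put into Sturm (Jacobi) form so as to absorb the common singularity at $s=0$. Concretely, I would set
\[
f(s):=\s_\kappa(s)^2\Bigl(\tfrac{\lambda(s)}{n-m}-\cot_\kappa(s)\Bigr)=\s_\kappa(s)^2\,\tfrac{\lambda(s)}{n-m}-\s_\kappa(s)\s_\kappa'(s)
\]
on the range of $s$ for which \eqref{eq:RiccattiIneq} is valid and $s<\delta_\kappa$, differentiate, and insert $\lambda'(s)/(n-m)\le -\kappa(s)-(\lambda(s)/(n-m))^2$ from \eqref{eq:RiccattiIneq} together with $\s_\kappa''=-\kappa\s_\kappa$. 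After cancellation this gives $f'(s)\le -(\s_\kappa(s)\lambda(s)/(n-m)-\s_\kappa'(s))^2\le0$, so $f$ is non-increasing. For the initial value I would use that $r_p$ is smooth near $p$ and the classical Laplacian comparison (cf.\ \cite[Theorem~3.4.2]{Hsu:2001}) to get $\Delta_V r_p(\gamma_r)=(n-1)/r+o(1/r)$ as $r\downarrow0$; since $V_\gamma(r)\to0$ and $s_p(\gamma_r)\sim C_p r$, this gives $\lim_{s\downarrow0}s\,\lambda(s)=n-1$ and, as $\s_\kappa(s)\sim s$, $\lim_{s\downarrow0}f(s)=0$. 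Hence $f\le0$, i.e.\ $\lambda\le m_\kappa$, which is \eqref{eq:LocalLapComp}. (Since $m\le1$ one even has $f(s)=\tfrac{m-1}{n-m}s+o(s)$ near $0$, consistent with $f(0+)=0$, $f'\le0$.)

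For the rigidity part I would proceed as follows. If equality holds in \eqref{eq:LocalLapComp} at $r_0<R$ with $s_0:=s(r_0)<\delta_\kappa$, then $f(s_0)=0$; being non-increasing with $f(0+)=0$, $f$ must vanish identically on $[0,s_0]$, so $f'\equiv0$ on $]0,s_0[$. Reading the equalities backwards: $f'=0$ forces $\s_\kappa\lambda/(n-m)=\s_\kappa'$, i.e.\ $\lambda=m_\kappa$ on all of $]0,s_0]$, and forces equality in \eqref{eq:RiccattiIneq}, hence equality in \eqref{eq:lambda/ds} and in the curvature hypothesis \eqref{eq:RiciLowBdd} along $\gamma$ for $0<r\le r_0$. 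By the last assertion of Lemma~\ref{lem:RiccatiDiferIneq}, equality in \eqref{eq:lambda/ds} forces $m=1$ and that $\nabla_{\nabla r_p}$ has at most one non-zero eigenvalue, of multiplicity $n-1$, at each $\gamma_r$; substituting $m=1$ into the equality form of \eqref{eq:RiciLowBdd} then gives \eqref{eq:Einstein}.

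To identify the Jacobi fields, with $m=1$ the rank-one structure of the shape operator means ${\rm Hess}\,r_p=\tfrac{1}{n-1}\Delta r_p\,(g-\d r\otimes\d r)$ along $\gamma$; using $\Delta_V r_p=\Delta r_p-\langle V,\nabla r_p\rangle$, $\langle V,\nabla r_p\rangle_{\gamma_r}=\tfrac{\d}{\d r}V_\gamma(r)$ and $\lambda=m_\kappa$, one finds $\tfrac{1}{n-1}\Delta r_p(\gamma_r)=C_p e^{-2V_\gamma(r)/(n-1)}\cot_\kappa(s_p(\gamma_r))+\tfrac1{n-1}\tfrac{\d}{\d r}V_\gamma(r)=:h(r)$. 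For a normal Jacobi field $Y$ along the minimizing geodesic $\gamma$ with $Y(0)=o_p$ one has $\nabla_{\dot{\gamma}}Y={\rm Hess}\,r_p(Y)=h(r)Y$, so $Y_i(r)=\rho(r)E_i(r)$ with $E_i$ the parallel extension of $e_i$ and $\rho$ the solution of $\rho'=h\rho$, $\rho(0)=0$, $\rho'(0)=1$. I would then check by direct differentiation (using $\d s=C_p e^{-2V_\gamma(r)/(n-1)}\d r$, $\s_\kappa'/\s_\kappa=\cot_\kappa$, $\s_\kappa(0)=0$, $\s_\kappa'(0)=1$) that $\rho(r)=C_p^{-1}F_\kappa(r)=C_p^{-1}e^{V_\gamma(r)/(n-1)}\s_\kappa(s_p(\gamma_r))$ solves this ODE, which is \eqref{eq:parallel}; reading $g(Y_i,Y_j)=C_p^{-2}F_\kappa(r)^2\delta_{ij}$ in geodesic polar coordinates around $p$ then yields \eqref{eq:roundshere1}.

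The hard part will be the two backward passages. First, getting $f(0+)=0$ precisely (rather than merely $\le 0$) in the borderline case $m=1$, where $\lambda$ and $m_\kappa$ have the identical leading singularity $1/s$: this forces one to use the exact $(n-1)/r$ asymptotics of $\Delta r_p$ near $p$ and to verify that the corrections from $V_\gamma$ and from the reparametrization $r\mapsto s$ contribute nothing at leading order. Second, one must ensure that $f\equiv0$ on the whole segment $[0,s_0]$ genuinely propagates equality through the full chain Bochner $\Rightarrow$ Cauchy--Schwarz \eqref{eq:HessTrace} $\Rightarrow$ ($m\le1$) $\Rightarrow$ \eqref{eq:RiccattiIneq} at \emph{every} point of $[0,r_0]$, so that $m=1$, the rank-one Hessian and \eqref{eq:Einstein} all hold on the full segment rather than just at $r_0$; the remaining ODE identification of $\rho$ and the metric formula are then routine.
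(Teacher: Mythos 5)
Your proposal is correct and follows essentially the same route as the paper: your $f(s)$ is exactly the paper's monotone quantity $\beta(s)=\s_{\kappa}^2(s)(\lambda-m_{\kappa}(s))$ divided by $n-m$, with the same derivative estimate, the same boundary asymptotics $s\lambda\to n-1$ obtained from the classical Laplacian comparison, and the same rigidity chain ($f\equiv0$ on $[0,s_0]$ forcing equality in \eqref{eq:lambda/ds} and in \eqref{eq:RiciLowBdd}, then $m=1$ and the rank-one Hessian via Lemma~\ref{lem:RiccatiDiferIneq}). The only harmless deviation is at the very end, where you identify the Jacobi fields through the first-order relation $\nabla_{\dot{\gamma}}Y_i={\rm Hess}\,r_p(Y_i)=h(r)Y_i$, whereas the paper uses the second-order radial curvature equation together with \eqref{eq:Einstein} to get $A'+A^2=F_{\kappa}''/F_{\kappa}$; both computations yield $Y_i=C_p^{-1}F_{\kappa}E_i$ and hence \eqref{eq:roundshere1}.
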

\begin{proof}
Set $S:=s_p(R)$. Then $r<R$ implies $s<S$.
Since $\Delta r_p(r,\theta)\to+\infty$ as $r\to0$, we see 
$\lambda(r,\theta)\to+\infty$ as $r\to0$ or $s\to0$. 
We set $\beta(s):=\s_{\kappa}^2(s)(\lambda-m_{\kappa}(s))$. Then, 
by \eqref{eq:RiccatiEqM} and \eqref{eq:RiccattiIneq}, for $s<S$ 
\begin{align*}
\beta'(s)&=2\s_{\kappa}'(s)\s_{\kappa}(s)(\lambda-m_{\kappa}(s))+\s_{\kappa}^2(s)\left(\frac{\d\lambda}{\d s}-m_{\kappa}'(s)\right)\\
&=2\s_{\kappa}^2(s)\cot_{\kappa}(s)(\lambda-m_{\kappa}(s))
+\s_{\kappa}^2(s)\left(\frac{\d\lambda}{\d s}+(n-m)\kappa(s)+\frac{m_{\kappa}^2(s)}{\;n-m\;}
 \right)\\
 &\leq
 \frac{\s_{\kappa}^2(s)}{\;n-m\;}\left(2m_{\kappa}(s)\lambda-2m_{\kappa}^2(s)\right)
+\frac{\s_{\kappa}^2(s)}{\;n-m\;}\left(m_{\kappa}^2(s)-\lambda^2
 \right)\\
 &=-\frac{\s_{\kappa}^2(s)}{\;n-m\;}\left(\lambda-m_{\kappa}(s) \right)^2\leq0.
\end{align*}
We note here that \eqref{eq:RiccattiIneq} is derived from \eqref{eq:RiciLowBdd}. 
If we show $\beta(0)=0$, then $\beta(s)\leq \beta(0)=0$. For this, it suffices to prove that $s(\lambda-m_{\kappa}(s))$ is upper bounded as $s\to0$. We already know that $\lim_{s\to0}s\, m_{\kappa}(s)=n-m$ and 
the ratio $s/r=s_p(r)/r$ converges to $C_p$  as $r\to0$. So it suffices to prove  $\lim_{r\to0}r\lambda(r,\theta)=C_p^{-1}(n-1)$ as $r\to0$, equivalently $\lim_{r\to0}r\Delta r_p(r,\theta)=n-1$, because $\lim_{r\to0}r\langle V,\nabla r_p\rangle (r,\theta)=0$. In view of the usual Laplacian comparison theorem for the Laplace-Bertrami operator $\Delta$ under the upper (resp.~lower) bound $K_{\eps}$ (resp.~$\kappa_{\eps}$) of sectional curvature on $B_{\eps}(p)$, we see 
$(n-1)\cot_{K_{\eps}}(r)\leq\Delta r_p(r,\theta)\leq (n-1)\cot_{\kappa_{\eps}}(r)$ on $B_{\eps}(p)$. 
This implies the desired assertion.
Next we assume that the equality in \eqref{eq:LocalLapComp} holds for some $r_0<R$, i.e., $\lambda(r_0,\theta)=(n-m)\cot_{\kappa}(s_0)$ for $r_0<R$ with $s_0=s(r_0)$. 
This implies $0=\beta(s_0)\leq\beta(s)\leq\beta(0)=0$, hence 
$\lambda(r)=m_{\kappa}(s)$ for all $s\in[0,s_0]$. From this,   
\begin{align*}
\frac{\d\lambda}{\d s}(s_0)=\frac{\d m_{\kappa}}{\d s}(s_0).
\end{align*}
In particular, we have at $r_0$
\begin{align}
\frac{\d \lambda}{\d s}&\leq -\frac{\lambda(r)^2}{\;n-m\;}-
 C_p^{-2} e^{\frac{\;4V_{\gamma}(r)\;}{n-m}}
\Ric_{m,n}(\Delta_V)(\dot\gamma_r,\dot\gamma_r)\nonumber\\
&\leq 
-\frac{\lambda(r)^2}{\;n-m\;}-
(n-m)\kappa(s)=-\frac{m_{\kappa}(s)^2}{\;n-m\;}-
(n-m)\kappa(s)=\frac{\d \lambda}{\d s}.\label{eq:RiccatiIneq}
\end{align}
Then the equality holds in  \eqref{eq:lambda/ds} 
at $x=(r_0,\theta)$. So we have $m=1$ by Lemma~\ref{lem:RiccatiDiferIneq}. 
We can conclude $\beta(s)\equiv0$ on $[0,s_0]$ from $\beta(0)=\beta(s_0)=0$ and $\beta'(s)\leq0$ so that $\lambda(r,\theta)=(n-1)\cot_{\kappa}(s)$ for $s\in]0,s_0]$. 
We then see the equality \eqref{eq:RiccatiIneq} at any $r\in]0,r_0]$, hence \eqref{eq:Einstein} holds at any $r\in]0,r_0]$. 

Finally we prove \eqref{eq:roundshere1} at any $r\in]0,r_0]$ under $\lambda(r_0)=(n-m)\cot_{\kappa}(s_0)$. 
Hereafter, we assume $r\in]0,r_0]$. 
By Lemma~\ref{lem:RiccatiDiferIneq}, at $x=(r,\theta)$, $\nabla_{\nabla r_p}$ has a non-zero eigenvalue $A(r)$ which is of $n-1$ multiplicity. 
Then we have 
\begin{align*}
\lambda(r,\theta)&= C_p^{-1} e^{\frac{\;2V_{\gamma}(r)\;}{n-1}}(\Delta r_p(r,\theta)-\langle V,\nabla r_p\rangle (r,\theta))\\
&= C_p^{-1} e^{\frac{\;2V_{\gamma}(r)\;}{n-1}}((n-1)A(r)-\langle V,\nabla r_p\rangle (r,\theta))=(n-1)\cot_{\kappa}(s),
\end{align*}
 where we use the equality \eqref{eq:LocalLapComp} at any $r\in]0,r_0]$. 
So we have $A(r)= C_p e^{-\frac{\;2V_{\gamma}(r)\;}{n-1}}\cot_{\kappa}(s)+\frac{\;\langle V,\nabla r_p\rangle (r,\theta)\;}{n-1}
 =(n-1)^{-1}\Delta r_p(\gamma_r)$. 
The radial curvature equation (see \cite[Theorem~2 in pp.~44]{Pet:RiemannianGeo}) tells us that 
\begin{align}
R(E_i,\dot{\gamma}_r)\dot{\gamma}_r=-(A'(r)+A(r)^2)E_i.\label{eq:radialCurvatureEq}
\end{align}
Combining Bochner-Weitzenb\"ock formula with \eqref{eq:Einstein}, 
we have 
\begin{align}
A'(r)+A(r)^2=\frac{\;V_{\gamma}''(r)\;}{n-1}+\left(\frac{\;V_{\gamma}'(r)\;}{n-1} \right)^2-\kappa(s_p(\gamma_r))e^{-\frac{\;4V_{\gamma}(r)\;}{n-1}}C_p^2=\frac{\;F_{\kappa}''(r)\;}{F_{\kappa}(r)}.\label{eq:Ar}
\end{align}
Since $F_{\kappa}(0)=0$ and $F_{\kappa}'(0)=C_p$, we obtain 
$$
Y_i(r)=C_p^{-1}F_{\kappa}(r)E_i(r)=C_p^{-1}e^{\frac{\;V_{\gamma}(r)\;}{n-1}}\s_{\kappa}(s_p(\gamma_r))E_i(r).
$$ 
This proves the desired conclusion. 
\end{proof}

\begin{corollary}\label{cor:Cutlocus}
Let $(M,g)$ be an $n$-dimensional complete Riemannian manifold and $V$ a $C^1$-vector field. Fix $p\in M$ and $R\in]0,+\infty[$. 
Assume that \eqref{eq:RiciLowBdd} holds for $r_p(x)<R$ 
with $x\notin {\rm Cut}(p)\cup\{p\}$.  Then $s_p(x)<\delta_{\kappa}$. 
\end{corollary}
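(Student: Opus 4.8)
The plan is to argue by contradiction, using the one–sided Riccati comparison of Lemma~\ref{lem:LaplacianComparisonconformal} together with the fact that $m_\kappa(s)=(n-m)\cot_\kappa(s)$ explodes to $-\infty$ as $s\uparrow\delta_\kappa$, whereas $\Delta_V r_p$ stays finite along the interior of a minimal geodesic. First I would dispose of the trivial cases: if $\delta_\kappa=+\infty$ there is nothing to prove, since $s_p(x)$ is always finite on a complete manifold, and $s_p(p)=0$; so from now on I assume $\delta_\kappa<+\infty$ and fix $x\notin{\rm Cut}(p)\cup\{p\}$ with $r_p(x)<R$.

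Next I would take the unique minimal unit speed geodesic $\gamma$ with $\gamma_0=p$, $\gamma_{r_p(x)}=x$, put $\theta=\dot\gamma_0$, and consider the map $t\mapsto s(t):=s_p(\gamma_t)=C_p\int_0^t e^{-\frac{2V_\gamma(u)}{n-m}}\,\d u$, which is smooth and strictly increasing with $s(0)=0$. Suppose, toward a contradiction, that $s_p(x)=s(r_p(x))\geq\delta_\kappa$. Since $s$ is a continuous strictly increasing function, there is a unique $r_1\in\,]0,r_p(x)]$ with $s(r_1)=\delta_\kappa$ (and $r_1\leq r_p(x)$ because $s(r_1)=\delta_\kappa\leq s(r_p(x))$). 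For every $t\in\,]0,r_1[$ we then have $t<r_1\leq r_p(x)<R$, $s(t)<\delta_\kappa$, and $\gamma_t\notin{\rm Cut}(p)\cup\{p\}$ (an interior point of a minimal geodesic is never a cut point of its basepoint), so Lemma~\ref{lem:LaplacianComparisonconformal}, whose standing hypothesis \eqref{eq:RiciLowBdd} is exactly what is assumed in the corollary, gives
\[
\lambda(t,\theta)\;\leq\;m_\kappa(s(t))\qquad\text{for all }t\in\,]0,r_1[.
\]

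Letting $t\uparrow r_1$, \eqref{eq:BdryCond*} yields $\cot_\kappa(s(t))\to-\infty$, hence $m_\kappa(s(t))\to-\infty$ (recall $n-m>0$), so $\lambda(t,\theta)\to-\infty$; since $\lambda(t,\theta)=C_p^{-1}e^{\frac{2V_\gamma(t)}{n-m}}\Delta_V r_p(\gamma_t)$ and the factor $C_p^{-1}e^{\frac{2V_\gamma(t)}{n-m}}$ converges to a finite positive limit, this forces $\Delta_V r_p(\gamma_t)\to-\infty$. On the other hand $\gamma_{r_1}\notin{\rm Cut}(p)\cup\{p\}$: if $r_1<r_p(x)$ it is an interior point of the minimal geodesic $\gamma$, and if $r_1=r_p(x)$ then $\gamma_{r_1}=x$, which was assumed off the cut locus. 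Hence $r_p$ is smooth near $\gamma_{r_1}$, so $\Delta_V r_p(\gamma_t)\to\Delta_V r_p(\gamma_{r_1})\in\R$ — a contradiction. Therefore $s_p(x)<\delta_\kappa$. I do not expect a serious obstruction: the only points needing care are the reduction to $\delta_\kappa<+\infty$, the monotonicity of $s(\cdot)$ giving a unique first crossing $r_1\leq r_p(x)$, and checking in the dichotomy $r_1<r_p(x)$ versus $r_1=r_p(x)$ that $\gamma_{r_1}$ avoids the cut locus in both cases, so that the Riccati blow‑up contradicts the smoothness of $\Delta_V r_p$ there.
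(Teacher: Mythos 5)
Your proof is correct and follows essentially the same route as the paper's: both argue by contradiction, locating the first parameter value where $s$ reaches $\delta_{\kappa}$ and using the comparison $\lambda\leq m_{\kappa}(s)$ from Lemma~\ref{lem:LaplacianComparisonconformal} together with \eqref{eq:BdryCond*} to force $\lambda\to-\infty$, contradicting the finiteness of $\Delta_V r_p$ off the cut locus. Your variant, which works along the single minimal geodesic to $x$ rather than with $S=s_p(R)$, is if anything slightly more careful about why the blow-up point avoids ${\rm Cut}(p)$.
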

\begin{proof}
We may assume $\delta_{\kappa}<\infty$.   
Take $x\in B_R(p)$  with 
$x\notin{\rm Cut}(p)\cup\{p\}$.  
Let $x=(r,\theta)$ be the polar coordinate expression around $p$ and set 
$s:=s_p(r)= C_p \int_0^{r}\exp\left(-\frac{2 \phi_V (\gamma_t)}{n-m} \right)\d t$ and 
$S=s_p(R)$, where $\gamma$ is a unit speed geodesic with 
$\gamma_0=p$ and $\dot{\gamma}_0=\theta$. 
We see $s_p(x)<S$. 
Assume $S>\delta_{\kappa}$. 
Then there exists $r_0\in]0,R[$ such that $\delta_{\kappa}= C_p \int_0^{r_0}\exp\left(-\frac{2V_{\gamma}(t)}{n-m} \right)\d t$. By \eqref{eq:LocalLapComp},   
$\lambda(r,\theta)\leq (n-m)\cot_{\kappa}(s)$ holds for $s<\delta_{\kappa}$. 
Since $r\uparrow r_0$ is equivalent to $s=s(r)\uparrow\delta_{\kappa}$, we have  
$$
\lambda(r_0,\theta)=\lim_{r\uparrow r_0}\lambda(r,\theta)\leq 
\lim_{r\uparrow r_0}(n-m)\cot_{\kappa}(s(r))=-\infty.
$$ This contradicts the 
well-definedness of $\lambda(r,\theta)= C_p^{-1} \left(e^{\frac{2\phi_V}{n-m}}\Delta_Vr_p\right)(r,\theta)$ for $r\in]0,R[$. 
Therefore $S\leq\delta_{\kappa}$ under $\delta_{\kappa}<\infty$ and we obtain the conclusion $s_p(x)<S\leq\delta_{\kappa}$. 
\end{proof}

Let $p\in M$ and let $(r,\theta)$, $r>0$, $\theta\in \mathbb{S}^{n-1}$ be exponential 
polar coordinates (for the metric $g$) around $p$ which are defined on 
a maximal  star shaped domain in $T_pM$ called the 
\emph{segment domain}. Write the volume element $\d\m=J(r,\theta)\d r\land\d\theta$. 

Let $s_p(\cdot)$ be the re-parametrized distance function defined above. Inside the segment domain, 
$s_p$ has the simple formula
\begin{align*}
s_p(r,\theta)= C_p \int_0^re^{-\frac{\;2\phi_V(t,\theta)\;}{n-m}}\d t.
\end{align*}
Therefore, $s_p$ is a smooth function in the segment domain with the property that 
$\frac{\partial s}{\partial r}= C_p e^{-\frac{\;2\phi_V(r,\theta)\;}{n-m}}$. We can then also take $(s,\theta)$ 
to be coordinates which are also valid for the entire segment theorem. We can not control the derivative of $s$ in directions tangent to the sphere, so the new 
$(s,\theta)$ coordinates are \emph{not} orthogonal as in the case for geodesic polar coordinates. However, this is not the issue when we computing volumes as 
\begin{align}
\left.\begin{array}{rl}e^{-\frac{\;2\phi_V}{n-m}\;}\d\mu_V&=e^{-\frac{n-m+2}{n-m}\phi_V}J(r,\theta)\d r\land\d\theta\\
&=C_p^{-1}e^{-\phi_V}J(r,\theta)\d s\land\d\theta.\end{array}\right.\label{eq:conformal}
\end{align} 
Here $\d\mu_V=e^{-\phi_V}\d \m$. 
We denote the derivative in the
radial direction in terms of this parameter by $\frac{\d}{\d s}$. 
In geodesic polar coordinates $\frac{\d}{\d s}$ has the expression $\frac{\d }{\d s}= C_p^{-1} e^{\frac{\;2\phi_V(r,\theta)\;}{n-m}}\frac{\partial}{\partial r}$. 
Note that it is not the same as $\frac{\partial}{\partial s}$ in $(s,\theta)$ coordinates. 

\begin{proof}[Proof of Theorem~\ref{thm:GlobalLapComp}]
The implication \eqref{eq:RiciLowBdd}$\Longrightarrow$\eqref{eq:GloLapComp} 
for $R<\infty$ 
follows from Lemma~\ref{lem:LaplacianComparisonconformal}, because 
$r_p$ is smooth on $M\setminus(\text{\rm Cut}(p)\cup\{p\})$. 
The implication \eqref{eq:RiciLowBdd}$\Longrightarrow$\eqref{eq:GloLapComp} 
for $R=+\infty$ follows from it. 
\end{proof}

\section{Proofs of Theorem~\ref{thm:WeightedMyers} and Corollary~\ref{cor:WeightedMyers}}

\begin{proof}[Proof Theorem~\ref{thm:WeightedMyers}]
Suppose that there exist points $p,q\in M$ such that $s(p,q)>\delta_{\kappa}$. 
Since $\text{\rm Cut}(p)$ is closed and measure zero, we may assume $q\notin \text{\rm Cut}(p)$. By Lemma~\ref{lem:LaplacianComparisonconformal}, 
along minimal geodesic from $p$ to $q$, 
$\lambda(r,\theta)\leq m_{\kappa}(s)$. However, as 
$s\to\delta_{\kappa}$, $m_{\kappa}(s)\to-\infty$. This implies 
$\Delta r_p(x)\to-\infty$ as $s(p,x)\to\delta_{\kappa}$. This contradicts 
that $r_p$ is smooth in a neighborhood of $q$. 
The final assertion follows Remark~\ref{rem:SpRp}. 
\end{proof}
\begin{proof}[Proof of Corollary~\ref{cor:WeightedMyers}]
Suppose that $\sup_{q\in M}d(p,q)=+\infty$. Then there exists a sequence $\{q_i\}$ in $M$ such that $d(p,q_i)\to+\infty$ as $i\to+\infty$. 
By Lemma~\ref{lem:phimcomplete}, $s(p,q_{i})\to+\infty$ as $k\to+\infty$, 
which contradicts $\sup_{q\in M}s(p,q)\leq\delta_{\kappa}$. 
Therefore, $\sup_{q\in M}d(p,q)<\infty$, hence $M$ is compact. 
\end{proof}

\section{Proof of Theorem~\ref{thm:BGVol}}
Recall that for a Riemannian manifold $\frac{\d}{\d r}\log J(r,\theta)=\Delta r_p(r,\theta)$, where 
$\Delta r_p$ is the standard Laplacian acting on the distance function $r_p$ from the point $p$.  \eqref{eq:conformal} indicates we should consider the quantity 
\begin{align}
\frac{\d}{\d s}\log(e^{-V_{\gamma}(r)}J(r,\theta))= C_p^{-1}e^{\frac{\;2V_{\gamma}(r)\;}{n-m}}\left(\Delta r_p(r,\theta)-\langle V_{\gamma_r},\dot\gamma_r\rangle  \right)= C_p^{-1} 
e^{\frac{\;2V_{\gamma}(r)\;}{n-m}}\Delta_V r_p(r,\theta)
.\label{eq:quantity}
\end{align}

\begin{lemma}[Volume Element Comparison]\label{lem:VolComp}
Let $(M,g)$ be an $n$-dimensional complete Riemannian manifold and $V$ a $C^1$-vector field. Fix $p\in M$ and $R\in]0,+\infty]$.  
Assume that \eqref{eq:RiciLowBdd} holds for $r_p(x)<R$ with $x\notin {\rm Cut}(p)\cup\{p\}$. 
Let $J$ be the volume element in geodesic polar coordinates around $p\in M$ and set $J_V(r,\theta):=e^{-V_{\gamma}(r)}J(r,\theta)$. Then for $r_0<r_1<R$ with 
$r_1<{\rm cut}(\theta)$, 
\begin{align}
\frac{\;J_V(r_1,\theta)\;}{J_V(r_0,\theta)}
\leq \frac{\;\s_{\kappa}(s_p(r_1,\theta))^{n-m}\;}{\s_{\kappa}(s_p(r_0,\theta))^{n-m}}.\label{eq:VolComp} 
\end{align}
Here $\text{\rm cut}(\theta)$ is the distance from $p$ to the cut point along the geodesic with $\gamma(0)=p$ and $\dot{\gamma}(0)=\theta$. 
\end{lemma}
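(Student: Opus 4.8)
The plan is to integrate the differential inequality from Lemma~\ref{lem:RiccatiDiferIneq} after converting everything to the reparametrized variable $s$. First I would recall the identity $\frac{\d}{\d r}\log J(r,\theta)=\Delta r_p(r,\theta)$, so that $\frac{\d}{\d r}\log J_V(r,\theta)=\Delta r_p(r,\theta)-V_\gamma'(r)=\Delta_V r_p(r,\theta)$. Passing to the parameter $s$ via $\d s=C_p e^{-2V_\gamma(r)/(n-m)}\d r$ and using \eqref{eq:quantity}, this becomes $\frac{\d}{\d s}\log J_V = C_p^{-1}e^{2V_\gamma(r)/(n-m)}\Delta_V r_p(r,\theta)=\lambda(r,\theta)$, with $\lambda$ exactly the quantity of Lemma~\ref{lem:RiccatiDiferIneq}. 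On the other side, $\s_\kappa$ satisfies $\s_\kappa'(s)/\s_\kappa(s)=\cot_\kappa(s)$, so $\frac{\d}{\d s}\log \s_\kappa^{n-m}(s)=(n-m)\cot_\kappa(s)=m_\kappa(s)$.

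Next I would invoke Corollary~\ref{cor:Cutlocus}: under hypothesis \eqref{eq:RiciLowBdd} for $r_p(x)<R$, we have $s_p(x)<\delta_\kappa$, so along the geodesic $\gamma$ in direction $\theta$ the parameter $s$ stays in $]0,\delta_\kappa[$ for $r<\min(R,\mathrm{cut}(\theta))$ and $\cot_\kappa(s)$ is finite there. Then Lemma~\ref{lem:LaplacianComparisonconformal} gives $\lambda(r,\theta)\leq m_\kappa(s)$ for all such $r$, i.e.
\begin{align*}
\frac{\d}{\d s}\log J_V \leq \frac{\d}{\d s}\log \s_\kappa^{n-m}(s)
\end{align*}
on the interval $(s_p(r_0,\theta),s_p(r_1,\theta))$. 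Integrating this in $s$ from $s_p(r_0,\theta)$ to $s_p(r_1,\theta)$ yields $\log\big(J_V(r_1,\theta)/J_V(r_0,\theta)\big)\leq \log\big(\s_\kappa^{n-m}(s_p(r_1,\theta))/\s_\kappa^{n-m}(s_p(r_0,\theta))\big)$, which is precisely \eqref{eq:VolComp} after exponentiating. Since $r_1<\mathrm{cut}(\theta)$ guarantees that $r_p$ — hence $J$, $J_V$, and $\lambda$ — is smooth along the whole segment $[r_0,r_1]$ in direction $\theta$, there is no issue with the endpoints or with crossing the cut locus.

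The only delicate point is the behavior at the inner endpoint when $r_0\to 0$ (needed for the "$r_0=0$" normalization used later in Theorem~\ref{thm:BGVol}, though the lemma as stated only requires $r_0>0$): there both $J_V(r_0,\theta)\to 0$ and $\s_\kappa^{n-m}(s_p(r_0,\theta))\to 0$, and one must check that $J_V(r,\theta)/\s_\kappa^{n-m}(s_p(r,\theta))$ has a finite positive limit as $r\to 0$, using $J(r,\theta)\sim r^{n-1}$, $V_\gamma(r)\to 0$, $s_p(r,\theta)\sim C_p r$, and $\s_\kappa(s)\sim s$. For the stated lemma with $r_0>0$ this is not needed, so I expect the main (modest) obstacle to be purely bookkeeping: keeping straight the three variables $r$, $s$, and the coordinate $s$, and correctly applying the chain rule $\frac{\d}{\d s}=C_p^{-1}e^{2V_\gamma(r)/(n-m)}\frac{\partial}{\partial r}$ so that the monotonicity of $\log(J_V\cdot\s_\kappa^{-(n-m)})$ in $s$ is transparent.
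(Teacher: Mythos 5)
Your proposal is correct and follows essentially the same route as the paper: rewrite $\frac{\d}{\d s}\log J_V=\lambda$ via \eqref{eq:quantity}, bound it by $(n-m)\cot_{\kappa}(s)=\frac{\d}{\d s}\log \s_{\kappa}^{n-m}(s)$ using Lemma~\ref{lem:LaplacianComparisonconformal}, check via Corollary~\ref{cor:Cutlocus} that $s$ stays below $\delta_{\kappa}$ (the paper adds a one-line reduction to finite $R_0$ when $R=+\infty$), and integrate in $s$.
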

\begin{proof}
Recall $s=s_p(r)=s_p(r,\theta)= C_p \int_0^r\exp\left(-\frac{\;2V_{\gamma}(t)\;}{n-m} \right)\d t$ and 
$\gamma$ is the unit speed geodesic from $p$ with $\dot{\gamma}_0=\theta$.
First note that the right hand side of \eqref{eq:VolComp} is meaningful for 
$r_0<r_1<R$. Indeed, if $R<+\infty$,  $s_p(r_0,\theta)<s_p(r_1,\theta)<\delta_{\kappa}$ 
by Corollary~\ref{cor:Cutlocus}. If $R=+\infty$, we can take $R_0\in]r_1,+\infty[$ so that \eqref{eq:RiciLowBdd} holds for $r_p(x)<R_0$, hence $s_p(r_0,\theta)<s_p(r_1,\theta)<\delta_{\kappa}$ by Corollary~\ref{cor:Cutlocus}. 
From Lemma~\ref{lem:LaplacianComparisonconformal} and \eqref{eq:quantity} 
we have that 
\begin{align}
\frac{\d}{\d s}\log J_V(r,\theta)= C_p^{-1} e^{\frac{\;2 \phi_V\;}{n-m}} \Delta_V r_p(r,\theta)\leq (n-m)\cot_{\kappa}(s)
=\frac{\d}{\d s}\log(\s_{\kappa}(s)^{n-m})\label{eq:JacobIneq}
\end{align}
for $r\in]0,R\land {\rm cut}(\theta)[$.   
Integrating \eqref{eq:JacobIneq} between any $s_0< s_1
$  
with $s_i=s_p(r_i,\theta)$ and $r_i\in]0,R\land {\rm cut}(\theta)[$ $(i=0,1)$ gives 
\begin{align*}
\log\left(
\frac{\;J_V(r_1,\theta)\;}{J_V(r_0,\theta)} \right)
\leq \log\left(\frac{\;\s_{\kappa}(s_1)^{n-m}\;}{\s_{\kappa}(s_0)^{n-m}} \right)\quad\text{ implies }
\quad \frac{\;J_V(r_1,\theta)\;}{J_V(r_0,\theta)}
\leq \frac{\;\s_{\kappa}(s_1)^{n-m}\;}{\s_{\kappa}(s_0)^{n-m}}
\end{align*}
for all $r_0< r_1<R\land {\rm cut}(\theta)$. Note that since $\d s$ is an orientation preserving change of variables along the geodesic $\gamma$, the quantity is also non-increasing in terms of the parameter $r\in]0,R\land {\rm cut}(\theta)[$. 
\end{proof}

\begin{proof}[Proof of Theorem~\ref{thm:BGVol}]
By Lemma~\ref{lem:VolComp}, for all $r_1,r_2>0$ with $r_1< r_2<R$ and 
$r_2<{\rm cut}(\theta)$
\begin{align*}
\frac{\;J_V(r_2,\theta)\;}{J_V(r_1,\theta)}\leq \frac{\;\s_{\kappa}^{n-m}(s_p(r_2,\theta))\;}{\s_{\kappa}^{n-m}(s_p(r_1,\theta))}\leq 
\frac{\;\s_{\kappa}^{n-m}\left( 
\sup_{\eta\in\mathbb{S}^{n-1}}s_p(r_2,\eta)
\right)\;}{\s_{\kappa}^{n-m}\left(\inf_{\eta\in\mathbb{S}^{n-1}}s_p(r_1,\eta) 
\right)}.
\end{align*}
So for 
$0\leq r_a< r_b\leq r_d$, $0\leq r_a\leq r_c< r_d$ and  $r_d<R$, we have following inequality
\begin{align*}
\frac{\;\int_{\text{\rm cut}(\theta)\land r_c}^{\text{\rm cut}(\theta)\land r_d}J_V(r_2,\theta)\d r_2\;}{\int_{\text{\rm cut}(\theta)\land r_a}^{\text{\rm cut}(\theta)\land r_b}J_V(r_1,\theta)\d r_1}&\leq\frac{\;\int_{\text{\rm cut}(\theta)\land r_c}^{\text{\rm cut}(\theta)\land r_d}\s_{\kappa}^{n-m}(s_p(r_2,\theta))\d r_2\;}{\int_{\text{\rm cut}(\theta)\land r_a}^{\text{\rm cut}(\theta)\land r_b}\s_{\kappa}^{n-m}(s_p(r_1,\theta))\d r_1}
\\
&\leq 
\frac{\;\int_{r_c}^{r_d}\s_{\kappa}^{n-m}\left(\sup_{\eta\in\mathbb{S}^{n-1}}s_p(r_2,\eta)\right)\d r_2\;}{\int_{r_a}^{r_b}\s_{\kappa}^{n-m}\left(\inf_{\eta\in\mathbb{S}^{n-1}}s_p(r_1,\eta) 
\right)\d r_1}
\end{align*}
under $r_a=r_c$ or $r_b=r_d$ by use of \cite[Lemma~3.1]{Zhu97} 
(cf.~\cite[Proof of Theorem~3.2]{Zhu97}). 
From this, we can deduce that 
\begin{align*}
\frac{\;\int_{\mathbb{S}^{n-1}}\int_{\text{\rm cut}(\theta)\land r_c}^{\text{\rm cut}(\theta)\land r_d}J_V(r_2,\theta)\d r_2\d\theta\;}{\int_{\mathbb{S}^{n-1}}\int_{\text{\rm cut}(\theta)\land r_a}^{\text{\rm cut}(\theta)\land r_b}J_V(r_1,\theta)\d r_1\d\theta}
\leq
\frac{\;\int_{\mathbb{S}^{n-1}}\int_{r_c}^{r_d}\s_{\kappa}^{n-m}\left(\sup_{\eta\in\mathbb{S}^{n-1}}s_p(r_2,\eta) 
\right)\d r_2\d\theta\;}{\int_{\mathbb{S}^{n-1}}\int_{r_a}^{r_b}\s_{\kappa}^{n-m}\left(\inf_{\eta\in\mathbb{S}^{n-1}}s_p(r_1,\eta) 
\right)\d r_1\d\theta}
\end{align*}
holds for general $0\leq r_a< r_b\leq r_d$, $0\leq r_a\leq r_c< r_d$ and $r_d<R$.  
This implies that \eqref{eq:BGAnnuliUpLow} holds for $r_1<R$. 
If $\phi$ is rotationally symmetric around $p$, $s_p(r,\theta)$ can be written as $s_p(r)$ and one can derive 
\begin{align*}
\frac{\;\int_{\mathbb{S}^{n-1}}\int_{\text{\rm cut}(\theta)\land r_c}^{\text{\rm cut}(\theta)\land r_d}J_V(r_2,\theta)\d r_2\d\theta\;}{\int_{\mathbb{S}^{n-1}}\int_{\text{\rm cut}(\theta)\land r_a}^{\text{\rm cut}(\theta)\land r_b}J_V(r_1,\theta)\d r_1\d\theta}\leq
\frac{\;\int_{\mathbb{S}^{n-1}}\int_{r_c}^{r_d}\s_{\kappa}^{n-m}(s_p(r_2))\d r_2\d\theta\;}{\int_{\mathbb{S}^{n-1}}\int_{r_a}^{r_b}\s_{\kappa}^{n-m}(s_p(r_1))\d r_1\d\theta}.
\end{align*} 
This implies that \eqref{eq:BGAnnuli} holds for $r_1<R$.   
Similarly, in the modified coordinates $(s,\theta)$, we set 
\begin{align*}
\text{\rm cut}_s(\theta):=\int_0^{\text{\rm cut}(\theta)}
e^{-\frac{\;2V_{\gamma}(t)\;}{n-m}}\d t,
\end{align*} 
where $\gamma$ is the unit speed geodesic with $\gamma_0=p$ and $\dot{\gamma}_0=\theta$. Then we have 
\begin{align*}
\nu_V(C(p,s_0,s_1))=\int_{\mathbb{S}^{n-1}}\int_{\text{\rm cut}_s(\theta)\land s_0}^{\text{\rm cut}_s(\theta)\land s_1}J_V(r(s,\theta),\theta)\d s\d \theta,
\end{align*}
and 
\begin{align*}
v(\kappa,s_0,s_1)=\int_{\mathbb{S}^{n-1}}\int_{s_0}^{s_1}\s_{\kappa}^{n-m}(s)\d s\d \theta=\omega_{n-1}\int_{s_0}^{s_1}\s_{\kappa}^{n-m}(s)\d s.
\end{align*}
Therefore, \eqref{item:BG2} follows.
Here $r(s,\theta):= C_p^{-1} \int_0^s\exp\left(\frac{\;2V_{\gamma}(f^{-1}(u))\;}{n-m} \right)\d u$ with 
$f(r):=s_p(r,\theta)$.  
Note that $s_1<\delta_{\kappa}$ always holds under the condition. 
Indeed, $s_1<S$ implies $s_1<\delta_{\kappa}$ under $R<+\infty$ by 
Corollary~\ref{cor:Cutlocus}. When $R=+\infty$, for any $\theta\in\mathbb{S}^{n-1}$ there exists $R_0\in]0,+\infty[$ 
depending on $\theta$ such that $s_1<s(R_0,\theta)$. Then applying Corollary~\ref{cor:Cutlocus} for $R_0<\infty$,
$r_1:=r(s_1,\theta)<r(s(R_0,\theta),\theta)=R_0$ implies 
$s_1=s(r(s_1,\theta),\theta)<\delta_{\kappa}$, where we use \eqref{eq:RiciLowBdd} holds for 
$r_p(x)<R_0$. 
\end{proof}

\begin{proof}[Proof of Corollary~\ref{cor:BGVol}]
By Theorem~\ref{thm:BGVol}\eqref{item:BG1}, for 
$0<r_1<r_2<R$ 
\begin{align*}
\frac{\;\mu_V(B_{r_2}(p))\;}{\mu_V(B_{r_1}(p))}&\leq \frac{\;\int_0^{r_2}\left( C_p e^{-\frac{\;2\underline{\phi}_V(r)\;}{n-m}}r\right)^{n-m}\d r\;}{\int_0^{r_1}\left(  C_p e^{-\frac{\;2\overline{\phi}_V(r)\;}{n-m}}r\right)^{n-m}\d r}\\
&\leq 
e^{2(\overline{\phi}_V(r_1)-\underline{\phi}_V(r_2))}\frac{\;\int_0^{r_2} r^{n-m}\d r\;}{\int_0^{r_1} r^{n-m}\d r}
=e^{2(\overline{\phi}_V(r_1)-\underline{\phi}_V(r_2))}\left(\frac{r_2}{r_1} \right)^{n-m+1}.
\end{align*}
\end{proof}

\section{Proofs of Theorem~\ref{thm:AmbroseMyers}, Corollaries~\ref{cor:AmbroseMyersVbdd} and \ref{cor:AmbroseMyers}}

\begin{proof}[Proof of Theorem~\ref{thm:AmbroseMyers}]
Suppose that $M$ is non-compact. Then there exists a unit speed geodesic $\gamma$ with $\gamma_0=p$ satisfying \eqref{eq:Ambrose}. Note that the function $\lambda(t)$ is smooth for all $t>0$ along $\gamma$. 
By \eqref{eq:lambda/dr}, we have 
\begin{align*}
\lambda(t)-\lambda(1)+\frac{C_p}{n-m}\int_1^t e^{-\frac{2V_{\gamma}(r)}{n-m}}\lambda(r)^2\d r
&\leq - C_p^{-1} 
\int_1^t 
e^{\frac{\;2V_{\gamma}(r)\;}{n-m}}\text{
\rm Ric}_{m,n}(\Delta_V)(\dot{\gamma}_r,\dot{\gamma}_r)\d r.
\end{align*} 
Hence 
\begin{align}
\lim_{t\to+\infty}\left(\lambda(t)+\frac{C_p}{n-m}\int_1^t e^{-\frac{\;2V_{\gamma}(r)\;}{n-m}}\lambda(r)^2\d r\right)=-\infty.\label{eq:MyerEstimate}
\end{align}
In particular, $\lim_{t\to+\infty}\lambda(t)=-\infty$. 
Next we prove that there exists a finite number $T > 0$ such
that $\lim_{t\to T-}\lambda(t)=-\infty$, which contradicts the smoothness of $\lambda(r)$. By \eqref{eq:MyerEstimate}, 
given $C>n-m$ there exists $t_0>1$ such that 
$$
-\lambda(t_0)-\frac{C_p}{n-m}\int_1^{t_0}e^{-\frac{\;2V_{\gamma}(r)\;}{n-m}}\lambda(r)^2\d r\geq \frac{C}{n-m}.
$$
Since 
$$
\lim_{t\to+\infty}\int_1^t
e^{\frac{\;2V_{\gamma}(r)\;}{n-m}} \text{
\rm Ric}_{m,n}(\Delta_V)(\dot{\gamma}_r,\dot{\gamma}_r)\d r=+\infty,
$$ 
there exists $t_1\in]t_0,+\infty[$ such that 
$\int_{t_0}^t
e^{\frac{\;2V_{\gamma}(r)\;}{n-m}} \text{
\rm Ric}_{m,n}(\Delta_V)(\dot{\gamma}_r,\dot{\gamma}_r)\d r\geq0$ for all $t\geq t_1$. 
Let $\psi(t)$ be the function defined by 
\begin{align}
\psi(t):=-\lambda(t)-\frac{C_p}{n-m}\int_1^te^{-\frac{\;2V_{\gamma}(r)}{n-m}\;}\lambda(r)^2\d r- C_p^{-1} \int_1^t e^{\frac{\;2V_{\gamma}(r)\;}{n-m}} \text{
\rm Ric}_{m,n}(\Delta_V)(\dot{\gamma}_r,\dot{\gamma}_r)\d r.\label{eq:MyerEstimateInt}
\end{align}
Then we see $\psi'(t)\geq0$ by \eqref{eq:lambda/dr}. 
Hence $\psi(t)\geq\psi(t_0)$ for $t\geq t_1>t_0$. This implies that 
\begin{align}
-\lambda(t)-\frac{C_p}{n-m}\int_1^t e^{-\frac{\;2V_{\gamma}(r)}{n-m}\;}\lambda(r)^2\d r\geq\frac{C}{n-m}>1\label{eq:Stpe1}
\end{align}
holds for all $t\geq t_1$. 
Let us consider the sequence $\{t_{\ell}\}$ defined inductively by 
\begin{align*}
C_p\int^{t_{\ell+1}}_{t_{\ell}}e^{-\frac{\;2V_{\gamma}(r)\;}{n-m}}
\d r
=(n-m)\left(\frac{n-m}{C} \right)^{\ell-1}\quad \text{ for }\quad\ell\geq1.
\end{align*}
The existence of such sequence is guaranteed by the 
$(V,m)$-completeness of $(M,g,V)$ at $p$.     
Let $T$ be the increasing limit of $\{t_{\ell}\}$. Then we see 
\begin{align*}
C_p\int_{t_1}^Te^{-\frac{\;2V_{\gamma}(r)\;}{n-m}}
\d r=\frac{C(n-m)}{C-n+m}. 
\end{align*}
In view of the $(V,m)$-completeness of $(M,g,V)$ at $p$, we have 
$$
\int_1^{\infty}e^{-\frac{\;2V_{\gamma}(r)\;}{n-m}}
\d r=+\infty.
$$
Thus we obtain $T<\infty$. 
Finally  we claim that for given $\ell\in\mathbb{N}$, $-\lambda(t)\geq\left( \frac{C}{n-m}\right)^{\ell}$ for all $t\geq t_{\ell}$. This is true for $\ell=1$ by \eqref{eq:Stpe1}. Suppose that $-\lambda(r)\geq\left( \frac{C}{n-m}\right)^{\ell}$ for all $r\geq t_{\ell}$ and fix $t\geq t_{\ell+1}$. Then using inequality \eqref{eq:Stpe1} again, 
\begin{align*}
-\lambda(t)&\geq\frac{C}{n-m}+\frac{C_p}{n-m}\int_1^{t_{\ell}}
e^{-\frac{\;2V_{\gamma}(r)\;}{n-m}}\lambda(r)^2\d r+ 
\frac{C_p}{n-m}\int_{t_{\ell}}^{t_{\ell+1}}
e^{-\frac{\;2V_{\gamma}(r)\;}{n-m}}\lambda(r)^2\d r\\
&\geq  
\frac{C_p}{n-m}\int_{t_{\ell}}^{t_{\ell+1}}
e^{-\frac{\;2V_{\gamma}(r)\;}{n-m}}\lambda(r)^2\d r\\
&\geq 
 \frac{C^{2\ell}}{(n-m)^{2\ell}}\cdot\frac{(n-m)^{\ell-1}}{C^{\ell-1}}=\left(\frac{C}{n-m} \right)^{\ell+1}.
\end{align*}
Therefore we prove the claim. 
In particular, $\lim_{t\to T-}\lambda(t)=-\infty$ which is the desired contradiction. 
\end{proof}

\begin{proof}[Proof of Corollary~\ref{cor:AmbroseMyersVbdd}]
Suppose that there exists a non-negative integrable function $f$ on $[0,+\infty[$ satisfying $\langle V,\nabla r_p\rangle \geq -f(r_p)$. Then 
$V_{\gamma}(r)\geq -\int_0^rf(s)\d s\geq -\int_0^{\infty}f(s)\d s>-\infty$ and ${\Ric}_{m,n}(\Delta_V)\geq0$ imply 
\begin{align*}
\int_0^{\infty}&e^{\frac{\;2V_{\gamma}(t)}{n-m}\;}{\Ric}_{m,n}(\Delta_V)(\dot{\gamma}_t,\dot{\gamma}_t)\d t\\
&\geq \exp\left(-\frac{2}{n-m}\int_0^{\infty}f(s)\d s\right)\int_0^{\infty}{\Ric}_{m,n}(\Delta_V)(\dot{\gamma}_t,\dot{\gamma}_t)\d t=+\infty.
\end{align*}
This yields the conclusion by Theorem~\ref{thm:AmbroseMyers}.
\end{proof}

\begin{proof}[Proof of Corollary~\ref{cor:AmbroseMyers}]
Suppose that \eqref{eq:AmbroseRiccibounds} holds for 
every unit speed geodesic $\gamma$ emanating from $p$. 
The $(V,m)$-completeness of $(M,g,V)$ at $p$ implies 
\begin{align*}
\int_0^{\infty}e^{-\frac{\;2V_{\gamma}(t)\;}{n-m}}\d t=+\infty.
\end{align*}
Then we have
\begin{align*}
\int_0^{\infty}e^{\frac{\;2V_{\gamma}(t)\;}{n-m}}{\Ric}_{m,n}(\Delta_V)(\dot{\gamma}_t,\dot{\gamma}_t)\d t\geq (n-m)\kappa\, C_p^2 \int_0^{\infty}
e^{-\frac{\;2V_{\gamma}(t)\;}{n-m}}\d t=+\infty.
\end{align*}
This yields the conclusion by Theorem~\ref{thm:AmbroseMyers}.
\end{proof}

\section{Proof of Theorem~\ref{thm:ChengDiamSphere}}

For the proof of Theorem~\ref{thm:ChengDiamSphere}, we need 
the following lemma on the solution of Jacobi equation. 

\begin{lemma}\label{lem:SymJacob}
Let $\kappa:[0,\infty[\to\R$ be a continuous function and 
 $\s_{\kappa}$ the unique solution of the Jacobi equation $\s_{\kappa}''(s)+\kappa(s)\s_{\kappa}(s)=0$ with 
$\s_{\kappa}(0)=0$ and $\s_{\kappa}'(0)=1$, and $\delta_{\kappa}:=\inf\{s>0\mid \s_{\kappa}(s)=0\}$ the first zero point of $\s_{\kappa}$. Assume that  $\delta_{\kappa}<\infty$ and $\kappa(s)=\kappa(\delta_{\kappa}-s)$ holds for all $s\in[0,\delta_{\kappa}]$. Then 
$\s_{\kappa}'(\delta_{\kappa})=-1$, $\s_{\kappa}'(\delta_{\kappa}/2)=0$ and  
$\s_{\kappa}(s)=\s_{\kappa}(\delta_{\kappa}-s)$ for all $s\in[0,\delta_{\kappa}]$. 
\end{lemma}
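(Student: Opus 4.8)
The plan is to push the reflection $s\mapsto\delta_{\kappa}-s$ through the Jacobi equation and then use uniqueness of solutions of a linear second order ODE. First I would set $\psi(s):=\s_{\kappa}(\delta_{\kappa}-s)$ for $s\in[0,\delta_{\kappa}]$ and note that, thanks to the hypothesis $\kappa(s)=\kappa(\delta_{\kappa}-s)$, the function $\psi$ solves the \emph{same} Jacobi equation: indeed $\psi''(s)=\s_{\kappa}''(\delta_{\kappa}-s)=-\kappa(\delta_{\kappa}-s)\,\s_{\kappa}(\delta_{\kappa}-s)=-\kappa(s)\,\psi(s)$, i.e. $\psi''+\kappa\psi=0$. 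Its data at $s=0$ are $\psi(0)=\s_{\kappa}(\delta_{\kappa})=0$ and $\psi'(0)=-\s_{\kappa}'(\delta_{\kappa})=:c$.

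Next I would record two elementary facts. Since $\s_{\kappa}'(0)=1>0$ and $\delta_{\kappa}$ is the first positive zero, $\s_{\kappa}>0$ on $]0,\delta_{\kappa}[$; in particular $\s_{\kappa}(\delta_{\kappa}/2)>0$. Moreover $c\neq0$: if $c=0$, then $\s_{\kappa}$ would be a solution of the Jacobi equation with $\s_{\kappa}(\delta_{\kappa})=\s_{\kappa}'(\delta_{\kappa})=0$, forcing $\s_{\kappa}\equiv0$ by uniqueness and contradicting $\s_{\kappa}'(0)=1$. (In fact $\s_{\kappa}>0$ on $]0,\delta_{\kappa}[$ together with $\s_{\kappa}(\delta_{\kappa})=0$ gives $\s_{\kappa}'(\delta_{\kappa})\le0$, hence $c\ge0$, so $c>0$.)

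Now, by uniqueness of the solution of the linear equation $w''+\kappa w=0$ with prescribed value and derivative at $s=0$, both $\psi$ and $c\,\s_{\kappa}$ have value $0$ and derivative $c$ at $0$, whence $\psi\equiv c\,\s_{\kappa}$, that is
\[
\s_{\kappa}(\delta_{\kappa}-s)=c\,\s_{\kappa}(s)\qquad\text{for all }s\in[0,\delta_{\kappa}].
\]
Evaluating at $s=\delta_{\kappa}/2$ and dividing by $\s_{\kappa}(\delta_{\kappa}/2)>0$ forces $c=1$; this simultaneously yields $\s_{\kappa}'(\delta_{\kappa})=-c=-1$ and the symmetry $\s_{\kappa}(s)=\s_{\kappa}(\delta_{\kappa}-s)$. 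Differentiating the latter identity gives $-\s_{\kappa}'(\delta_{\kappa}-s)=\s_{\kappa}'(s)$, and at $s=\delta_{\kappa}/2$ this reads $-\s_{\kappa}'(\delta_{\kappa}/2)=\s_{\kappa}'(\delta_{\kappa}/2)$, so $\s_{\kappa}'(\delta_{\kappa}/2)=0$. There is no genuine obstacle here; the only points needing care are the non-degeneracy $c\neq0$ and the strict positivity of $\s_{\kappa}$ on the open interval $]0,\delta_{\kappa}[$, both immediate from the uniqueness theorem and the definition of $\delta_{\kappa}$.
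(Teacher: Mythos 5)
Your argument is correct and is essentially the paper's proof: both reflect via $s\mapsto\delta_{\kappa}-s$, invoke uniqueness for the linear Jacobi equation to get $\s_{\kappa}(\delta_{\kappa}-s)=c\,\s_{\kappa}(s)$ with $c=-\s_{\kappa}'(\delta_{\kappa})$, and evaluate at the midpoint to force $c=1$. Your explicit verification that $c\neq0$ is a small point the paper leaves implicit, but otherwise the two proofs coincide.
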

\begin{proof}
Set $\overline{\s}_{\kappa}(s):=\s_{\kappa}(\delta_{\kappa}-s)$ for $s\in[0,\delta_{\kappa}]$. Then this satisfies 
$\overline{\s}_{\kappa}''(s)+\kappa(s)\overline{\s}_{\kappa}(s)=0$ and $\overline{\s}_{\kappa}(0)=0$ and $\overline{\s}_{\kappa}'(0)=-\s_{\kappa}'(\delta_{\kappa})$. If we prove 
$\overline{\s}_{\kappa}'(0)=1$, i.e., $\s_{\kappa}'(\delta_{\kappa})=-1$, then the uniqueness of the solution implies the assertion.  

Note that $s_{\kappa}(s):=\overline{\s}_{\kappa}(s)/ \overline{\s}_{\kappa}'(0)=-\s_{\kappa}(\delta_{\kappa}-s)/\s_{\kappa}'(\delta_{\kappa})$ also satisfies the Jacobi equation with $s_{\kappa}(0)=0$ and $s_{\kappa}'(0)=1$. 
Then the uniqueness implies $s_{\kappa}(s)=\s_{\kappa}(s)$, that is, $\s_{\kappa}(\delta_{\kappa}-s)=-\s_{\kappa}'(\delta_{\kappa})s_{\kappa}(s)$ for $s\in [0,\delta_{\kappa}]$, in particular, $\s_{\kappa}(\delta_{\kappa}/2)=-\s_{\kappa}'(\delta_{\kappa})\s_{\kappa}(\delta_{\kappa}/2)$. Therefore, $\s_{\kappa}'(\delta_{\kappa})=-1$ by $\s_{\kappa}(\delta_{\kappa}/2)>0$.  
The proof of $\s_{\kappa}'(\delta_{\kappa}/2)=0$ is easy from 
$\s_{\kappa}'(s)=-\s_{\kappa}'(\delta_{\kappa}-s)$ for $s\in[0,\delta_{\kappa}]$.   
\end{proof}

Hereafter, we assume $V=\nabla\phi$ for some $\phi\in C^2(M)$ and set $C_p:=\exp\left(-\frac{\;2\phi(p)\;}{n-m}\right)$ for the definition of $s_p(x)$ with $p$ being an arbitrary point.    
We now consider the conformal metric $h=e^{-\frac{\;4\phi\;}{n-m}}g$.  

\begin{lemma}\label{lem:Vindepenedent}
Fix $p\in M$. Suppose that there exists a point $q\in M$  such that 
$s(p,q)=d^{\,h}(p,q)$ and let $\gamma$ be the minimal unit speed $g$-geodesic from $p$ and $q$ such that $s(p,q)=\int_0^{d(p,q)}e^{-2\frac{\; \phi(\gamma_t)\;}{n-m}}\d t$. Then  $\nabla\phi$ is parallel to $\dot{\gamma}$ $($not parallel along $\gamma$$)$. 
Moreover if 
$s(p, x) = d^{\,h}(p, x)$ holds for any $x\in M$, then
 $\phi$ is rotationally symmetric around $p$. 
\end{lemma}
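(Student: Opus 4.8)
\emph{Proof proposal.} The plan is to use the hypothesis $s(p,q)=d^{\,h}(p,q)$ to force the chosen unit speed $g$-geodesic $\gamma$ to be length-minimizing for the conformal metric $h=e^{-\frac{\;4\phi\;}{n-m}}g$, and then to read off the claim from the first variation of $h$-length. First I would record the elementary identity that, for any $g$-minimizing geodesic $\sigma\colon[0,d(p,q)]\to M$ parametrized by $g$-arclength, the $h$-length of $\sigma$ equals $\int_0^{d(p,q)}e^{-\frac{\;2\phi(\sigma_t)\;}{n-m}}\d t$, since $|\dot\sigma|_h=e^{-\frac{\;2\phi(\sigma)\;}{n-m}}|\dot\sigma|_g$. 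Consequently $d^{\,h}(p,q)\le s(p,q)$ always, and the hypothesis says precisely that the particular $\gamma$ realizing the infimum defining $s(p,q)$ has $h$-length equal to $d^{\,h}(p,q)$; thus $\gamma$ is a global minimizer of $h$-length among curves from $p$ to $q$, hence a critical point under all variations with fixed endpoints.

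Next I would compute that first variation directly. For a variation $\gamma_u$ of $\gamma=\gamma_0$ with variation field $W$ vanishing at the endpoints, using $|\dot\gamma|_g\equiv1$, $\nabla_{\dot\gamma}\dot\gamma=0$, the symmetry of $\nabla$, and one integration by parts, one obtains
\begin{align*}
0=\left.\frac{\d}{\d u}\right|_{u=0}\!\!\mathrm{Length}_h(\gamma_u)=-\frac{2}{n-m}\int_0^{d(p,q)}e^{-\frac{\;2\phi(\gamma_t)\;}{n-m}}\big\langle \nabla\phi-\langle\nabla\phi,\dot\gamma_t\rangle\dot\gamma_t,\,W(t)\big\rangle\,\d t.
\end{align*}
Since $W$ is arbitrary among fields vanishing at the endpoints, the component of $\nabla\phi$ orthogonal to $\dot\gamma_t$ vanishes along $\gamma$; that is, $\nabla\phi$ is pointwise proportional to $\dot\gamma$ along $\gamma$. (Equivalently, one can reparametrize $\gamma$ by $h$-arclength, note it is then an $h$-geodesic, and apply the conformal-change formula $\bar\nabla_XY=\nabla_XY+(Xf)Y+(Yf)X-\langle X,Y\rangle\nabla f$ with $f=-\frac{2}{n-m}\phi$ to reach the same identity.)

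For the ``moreover'' part I would apply the first statement with $q=x$ for every $x\in M$: along the minimal $g$-geodesic from $p$ to $x$ the field $\nabla\phi$ is proportional to the velocity, so for $x\notin{\rm Cut}(p)\cup\{p\}$ we get $\nabla\phi(x)=\langle\nabla\phi,\nabla r_p\rangle_x\,\nabla r_p(x)$, i.e.\ the part of $\nabla\phi$ tangent to the geodesic spheres centred at $p$ is zero. Pulling $\phi$ back by $\exp_p$ to the segment domain (which is star-shaped, hence, since $n>1$, connected after deleting the origin) and using geodesic polar coordinates $(r,\theta)$, this says $\partial_\theta(\phi\circ\exp_p)\equiv0$, so $\phi\circ\exp_p$ is a function of $r$ alone; hence $\phi(x)=\Phi(r_p(x))$ on $M\setminus({\rm Cut}(p)\cup\{p\})$ for some function $\Phi$, and by continuity on all of $M$. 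Thus $\phi$ is rotationally symmetric around $p$.

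The step I expect to be the main obstacle is the first one: carefully checking that the specific geodesic $\gamma$ named in the hypothesis is genuinely an $h$-length minimizer, and therefore a critical point of $h$-length — once that is secured, the variational identity and the polar-coordinate propagation are routine. A minor point worth attention is that the first-variation computation only requires $\phi\in C^2$ and $\gamma$ smooth, both of which hold irrespective of whether $\gamma$ meets ${\rm Cut}(p)$.
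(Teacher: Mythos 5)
Your proof is correct and follows essentially the same route as the paper: both identify $\gamma$ as a length minimizer for the conformal metric $h$ and extract from this that the component of $\nabla\phi$ orthogonal to $\dot\gamma$ vanishes, the only difference being that you derive the Euler--Lagrange identity from the first variation of $h$-length of the $g$-geodesic, whereas the paper reparametrizes by $h$-arclength and applies the conformal-change formula for the Levi-Civita connection (the alternative you yourself note in parentheses). Your treatment of the ``moreover'' part --- propagating $\langle\nabla\phi,\cdot\rangle=0$ in directions tangent to the geodesic spheres to conclude rotational symmetry --- likewise matches the paper's argument.
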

\begin{proof}
Since $t<d(p,q)$ implies $\gamma_t\notin {\rm Cut}(p)$, 
 we have $s(p,q)=\int_0^{d(p,q)}e^{-2\frac{\; \phi(\gamma_t)\;}{n-m}}\d t=L^h(\gamma)$.  
Combining this with 
 $s(p,q)=d^{\,h}(p,q)$ we get $d^{\,h}(p,q)=L^h(\gamma)$. 
Then $\gamma$ is a minimal geodesic in the $h$ metric. In particular, $\nabla^h_{\frac{\d\gamma}{\d s}}\frac{\d \gamma}{\d s}=0$. 
Applying the formula for connection of $h$ in terms of $g$, we have 
\begin{align*}
0&=\nabla^h_{\frac{\d\gamma}{\d s}}\frac{\d \gamma}{\d s}\\
&=\nabla^g_{\frac{\d\gamma}{\d s}}
\frac{\d \gamma}{\d s}
-\frac{4}{n-m}\left\langle \frac{\d \gamma}{\d s},  \nabla \phi \right\rangle \frac{\d \gamma}{\d s}+\frac{2}{n-m}
\left\langle \frac{\d \gamma}{\d s},\frac{\d \gamma}{\d s}\right\rangle  \nabla\phi\\
&=
\frac{2e^{\;\frac{4 \phi(\gamma_r)\;}{n-m}}}{n-m}
\left(-\langle \dot{\gamma}_r, \nabla\phi \rangle \dot{\gamma}_r+
 \nabla\phi\right). 
\end{align*}
Then we obtain that $ \nabla\phi=\langle  \nabla\phi,\dot{\gamma}_r\rangle \dot{\gamma}_r$, i.e., 
$ \nabla\phi$ is parallel to $\dot{\gamma}$. 
Suppose further that $s(p,x)=d^{\,h}(p,x)$ for any $x\in M$. 
Let $x_1,x_2\in M$ be the points 
in the sphere $\partial B_r(p)$ for $r>0$ and 
$c:[0,1]\to \partial B_r(p)$ a curve on $ \partial B_r(p)$ 
joining $c(0)=x_1$ and $c(1)=x_2$. Then we see $\langle \nabla\phi,\dot{c}_t\rangle =0$, because $\nabla\phi$ is parallel to $\dot{\gamma}$, where $\gamma$ is the $g$-geodesic from $p$ to a point in ${\rm Im}(c)$.  
Hence $\phi(x_2)-\phi(x_1)=\int_0^1\langle \nabla\phi,\dot{c}_t\rangle \d t=0$.  
\end{proof}

Here we encounter that $s$ 
does not necessarily satisfy the triangle inequality. 
To get around this difficulty  we  utilize again the conformal metric  $h$.

From $d^{\,h}(p,x)\leq s(p,x)$ and the triangle inequality for the $h$-metric we have 
\begin{align*}
s(p,x)+s(q,x)\geq d^{\,h}(p,x)+d^{\,h}(q,x)\geq d^{\,h}(p,q).
\end{align*}
\begin{proof}[Proof of Theorem~\ref{thm:ChengDiamSphere}]
First note that $\s_{\kappa}(s)=\s_{\kappa}(\delta_{\kappa}-s)$ 
holds for $s\in[0,\delta_{\kappa}]$ by Lemma~\ref{lem:SymJacob}. In particular, we have 
$\cot_{\kappa}(s)=-\cot_{\kappa}(\delta_{\kappa}-s)$ 
for all $s\in[0,\delta_{\kappa}]$. 
Let $r_p$ and $r_q$ be the distance functions to $p$ and $q$ respectively. Then by Theorem~\ref{thm:GlobalLapComp}, we have 
\begin{align*}
\Delta_{ \nabla\phi}(r_p+r_q)(x)\leq (n-m)e^{-\frac{\;2 \phi(x)\;}{n-m}}\left(\cot_{\kappa}(s_p(x))+\cot_{\kappa}(s_q(x)) \right)
\end{align*}
holds in the barrier sense. 
We also have $s_p(x)+s_q(x)\geq d^{\,h}(p,q)=\delta_{\kappa}$, so that 
\begin{align*}
\cot_{\kappa}\left(s_q(x) \right)\leq\cot_{\kappa}\left(\delta_{\kappa}-s_p(x) \right)=-\cot_{\kappa}\left(s_p(x) \right).
\end{align*}
Thus, $\Delta_{\nabla\phi}(r_p+r_q)\leq0$ holds in the barrier sense. 
Note that $\inf_M(r_p+r_q)$ attains its minimum at a point of minimal geodesic joining $p$ and $q$. Then one can apply the 
strong minimum principle for superharmonic functions in the barrier sense (see \cite{Calabi:strongmax, Esc-Hein} for the strong maximum principle for subharmonic functions in the barrier sense) so that 
$r_p(x)+r_q(x)=d(p,q)$ for all $x\in M$ and all geodesics starting point at $p$ in $M$ are minimizing and end at $q$. 
In particular, we have $\Delta_{\nabla\phi}(r_p+r_q)=0$ in the classical sense. Therefore, we have 
\begin{align*}
\cot_{\kappa}(s_p(x))=\cot_{\kappa}(\delta_{\kappa}-s_q(x)) \quad\text{ for all }\quad x\in M.  
\end{align*}    
Since $s\mapsto\cot_{\kappa}(s)$ is strictly decreasing, we have  $s_p(x)+s_q(x)=\delta_{\kappa}$. Hence 
$s_p(x)+s_q(x)=d^{\,h}(p,q)=s(p,q)=\delta_{\kappa}$ 
by $d^{\,h}(p,q)\leq s(p,q)\leq\delta_{\kappa}$ (see Theorem~\ref{thm:WeightedMyers}). We can apply the similar 
argument so that  
$d_p^{\,h}(x)+d_q^{\,h}(x)=d^{\,h}(p,q)=s(p,q)=\delta_{\kappa}$. 
Hence  
$0\leq s_p(x)-d_p^{\,h}(x)=d_q^{\,h}(x)-s_q(x)\leq0$ implies
$s_p(x)=d_p^{\,h}(x)$. Taking $x\notin{\rm Cut}(p)$, we see that 
there exists a unique minimal unit speed geodesic $\gamma$ with $\gamma_0=p$ and $\gamma_{r_p(x)}=x$ satisfying 
$s_p(x)=\int_0^{r_p(x)}e^{-\frac{\;2 \phi(\gamma_t)\;}{n-m}}\d t$. 
 Applying this with Lemma~\ref{lem:Vindepenedent}, 
$\phi$ is rotationally symmetric around $p$.
Secondly, we can deduce that 
\begin{align}
\Delta_{\nabla\phi}r_p(x)&=(n-m)e^{-\frac{\;2 \phi(x)\;}{n-m}}
\cot_{\kappa}(s_p(x)),\label{eq:Lap1}\\
\Delta_{\nabla\phi}r_q(x)&=(n-m)e^{-\frac{\;2 \phi(x)\;}{n-m}}
\cot_{\kappa}(s_q(x))\label{eq:Lap2}
\end{align}
hold in the barrier sense respectively. Consequently, \eqref{eq:Lap1} (resp.~\eqref{eq:Lap2}) holds for $x\in (\text{\rm Cut}(p)\cup\{p\})^c$ (resp.~$x\in (\text{\rm Cut}(q)\cup\{q\})^c$). 
Let $\eta$ be a minimal unit speed geodesic from $p$ to $q$ with $\dot{\eta}_0=\theta$. 
Applying Lemma~\ref{lem:LaplacianComparisonconformal} to \eqref{eq:Lap1}, 
we obtain $m=1$ and the expression of a metric of the form
\begin{align*}
g_{\eta_r}&=\d r^2+e^{\frac{\;2(\phi(r)+\phi(0))\;}{n-1}} \s_{\kappa}^2(s(r))g_{\,\mathbb{S}^{n-1}},\quad 0\leq r\leq d(p,q) 
\end{align*} 
with $s(r)=\int_0^r e^{-\frac{\;2 \phi(t)\;}{n-1}}\d t$ 
and $s(d(p,q))=\delta_{\kappa}$. 
This implies the conclusion. 
\end{proof}

\section{Proof of Theorem~\ref{thm:Splitting}}
Let $\gamma$ be a ray in $M$, i.e. a unit speed geodesic defined on $[0,+\infty[$ such that 
$d(\gamma_t,\gamma_s)=|s-t|$ for any $s,t\geq0$. The \emph{Busemann function} $b_{\gamma}:M\to\R$ for a ray $\gamma$ is defined by 
\begin{align*}
b_{\gamma}(x):=\lim_{t\to+\infty}\left(t-d(x,\gamma_t) \right), \quad x\in M.
\end{align*}
It follows from the triangle inequality that $t\mapsto d(x,\gamma_t)$ is monotonically 
non-decreasing in $t$, so that the above limit  exists. Moreover, it is well-known that $b_{\gamma}$ is a $1$-Lipschitz function.  See e.g. \cite{SchoenYau:LectDiffGeo}.
\begin{lemma}\label{lem:LaplacianCompAlongGeo}
Let $(M,g)$ be an $n$-dimensional complete Riemannian manifold and $V$ a $C^1$-vector field. 
Fix a point $p\in M$. 
Suppose that \eqref{eq:RiciLowBdd}
holds for any $x\in M$ with $\kappa\equiv0$.
Let $q\in M$ be a point such that $r_p$ is smooth at $q$, 
and let $\gamma$ be the unique unit speed 
minimal geodesic from $p$ to $q$. Then we have 
\begin{align}
(\Delta_V r_p)(q)\leq \frac{n-m}{\exp\left(\frac{2V_{\gamma}(r_p(q))}{n-m} \right)\int_0^{r_p(q)}\exp\left(-\frac{2V_{\gamma}(s)}{n-m} \right)\d s}.\label{eq:LaplacianComparison0}
\end{align} 
\end{lemma}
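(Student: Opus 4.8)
The plan is to obtain Lemma~\ref{lem:LaplacianCompAlongGeo} as the $\kappa\equiv0$ specialization of Theorem~\ref{thm:GlobalLapComp}. When $\kappa\equiv0$, the Jacobi equation $\s_{\kappa}''+\kappa\s_{\kappa}=0$ with $\s_{\kappa}(0)=0$, $\s_{\kappa}'(0)=1$ has solution $\s_{\kappa}(s)=s$, so $\delta_{\kappa}=+\infty$ and $\cot_{\kappa}(s)={\sf a}_{\kappa}(s)=1/s$ for all $s>0$. With this $\kappa$, the curvature hypothesis \eqref{eq:RiciLowBdd} reduces to ${\Ric}_{m,n}(\Delta_V)_x(\nabla r_p,\nabla r_p)\geq0$ on $({\rm Cut}(p)\cup\{p\})^c$, which is exactly what is assumed here (taking $R=+\infty$). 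Hence Theorem~\ref{thm:GlobalLapComp} applies and yields
\begin{align*}
(\Delta_V r_p)(x)\leq\frac{\;n-m\;}{s_p(x)}\,e^{-\frac{\;2\phi_V(x)\;}{n-m}}\,C_p
\end{align*}
for every $x\in({\rm Cut}(p)\cup\{p\})^c$; note $s_p(x)>0$ there since $(M,g)$ is complete and $x\neq p$.

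Next I would rewrite the right-hand side at the point $q$. Since $r_p$ is smooth at $q$, we have $q\notin{\rm Cut}(p)\cup\{p\}$, so there is a \emph{unique} unit speed minimal geodesic $\gamma$ from $p$ to $q$, with $r_p(q)=L(\gamma)$. By the formulas recorded just after \eqref{eq:ModifiedPhi} and in Remark~\ref{rem:SpRp},
\begin{align*}
\phi_V(q)=V_{\gamma}(r_p(q)),\qquad s_p(q)=C_p\int_0^{r_p(q)}e^{-\frac{\;2V_{\gamma}(t)\;}{n-m}}\d t.
\end{align*}
Substituting into the displayed bound, the factor $C_p$ cancels and one gets
\begin{align*}
(\Delta_V r_p)(q)\leq\frac{(n-m)\,e^{-\frac{\;2V_{\gamma}(r_p(q))\;}{n-m}}}{\int_0^{r_p(q)}e^{-\frac{\;2V_{\gamma}(t)\;}{n-m}}\d t}=\frac{n-m}{\exp\!\left(\frac{\;2V_{\gamma}(r_p(q))\;}{n-m}\right)\int_0^{r_p(q)}\exp\!\left(-\frac{\;2V_{\gamma}(s)\;}{n-m}\right)\d s},
\end{align*}
which is precisely \eqref{eq:LaplacianComparison0}.

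There is essentially no obstacle here: the analytic core — the Bochner/Riccati computation of Lemma~\ref{lem:RiccatiDiferIneq} and the comparison of Lemma~\ref{lem:LaplacianComparisonconformal} behind Theorem~\ref{thm:GlobalLapComp} — has already been done. The only thing to verify is the bookkeeping that $s_p(q)$ and $\phi_V(q)$ take the stated explicit integral form along the unique minimizing geodesic, which is immediate from smoothness of $r_p$ at $q$. Alternatively, one could give a self-contained proof by integrating the Riccati inequality \eqref{eq:lambda/dr} directly along $\gamma$ with $\kappa\equiv0$ (so that $\lambda(r)\to+\infty$ as $r\to0$ forces $\lambda(r)\leq m_{\kappa}(s_p(r))=(n-m)/s_p(r)$), but routing through Theorem~\ref{thm:GlobalLapComp} is the shortest path.
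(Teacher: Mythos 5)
Your proof is correct. The paper proves this lemma by the ``alternative'' you mention at the end: it integrates the Riccati inequality \eqref{eq:lambda/dr} directly along $\gamma$ with $\kappa\equiv0$, writing $\frac{1}{\lambda^2}\frac{\d\lambda}{\d r}\leq-\frac{C_p}{n-m}e^{-2V_{\gamma}(r)/(n-m)}$ and integrating from $\eps$ to $r_p(q)$ using $\lambda(\eps)\to+\infty$. Your route --- specializing Theorem~\ref{thm:GlobalLapComp} to $\kappa\equiv0$, where $\s_{\kappa}(s)=s$, $\delta_{\kappa}=+\infty$, $\cot_{\kappa}(s)=1/s$, and then unwinding $s_p(q)$ and $\phi_V(q)$ along the unique minimizing geodesic --- is the same mathematics one level up, since Theorem~\ref{thm:GlobalLapComp} itself rests on the same Riccati inequality from Lemma~\ref{lem:RiccatiDiferIneq}. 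If anything your deduction is slightly cleaner: the comparison in Lemma~\ref{lem:LaplacianComparisonconformal} is run through the quantity $\beta(s)=\s_{\kappa}^2(s)(\lambda-m_{\kappa}(s))$, which sidesteps the implicit assumption in the paper's direct integration that $\lambda$ does not vanish on $\,]0,r_p(q)]$ (a harmless gap there, since once $\lambda\leq0$ the claimed bound is trivial, but one worth noting). The only bookkeeping you needed --- that smoothness of $r_p$ at $q$ forces $q\notin{\rm Cut}(p)\cup\{p\}$, so the infima defining $\phi_V(q)$ and $s_p(q)$ are attained by the unique minimizing geodesic --- is exactly as recorded after \eqref{eq:ModifiedPhi} and in Remark~\ref{rem:SpRp}, and the factor $C_p$ cancels as you say.
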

\begin{proof}
Applying the Riccati inequality \eqref{eq:lambda/dr} along $\gamma$ under \eqref{eq:RiciLowBdd} 
with $\kappa\equiv0$, we see 
\begin{align*}
\frac{1}{\lambda(r)^2}\frac{\d\lambda}{\d r}(r)\leq -
\frac{C_p}{n-m}e^{-\frac{2V_{\gamma}(r)}{n-m}}. 
\end{align*}
Integrating this from $\eps>0$ to $r_p(q)$ and letting $\eps\to0$, we have from $\lim_{\eps\to0}\lambda(\eps)=+\infty$ that 
\begin{align*}
\lambda(r_p(q))=C_p^{-1}
e^{\frac{2V_{\gamma}(r_p(q))}{n-m}}(\Delta_Vr_p)(q)
\leq \frac{n-m}{C_p\int_0^{r_p(q)}e^{-\frac{2V_{\gamma}(r)}{n-m}}\d r}.
\end{align*}
This implies the conclusion. 
\end{proof}

\begin{lemma}\label{lem:subharmonic}
Let $(M,g)$ be an $n$-dimensional complete Riemannian manifold and $V$ a $C^1$-vector field. 
Suppose that $(M,g,V)$ is $(V,m)$-complete. 
Suppose that 
\eqref{eq:RiciLowBddStrong}
holds for any $p,x\in M$
with $\kappa=0$.
Then the Busemann function $b_{\gamma}$ for any ray $\gamma$ in $M$ is an $\Delta_V$-subharmonic function in the barrier sense, 
i.e., for each  $p\in M$ and any $\eps>0$, there exists a smooth function $b_{p,\eps}$ defined on a neighborhood $U_{\eps}(p)$ at $p$ such that $b_{p,\eps}(p)=b_{\gamma}(p)$, $b_{p,\eps}\leq b_{\gamma}$ on $U_{\eps}(p)$, and 
$\Delta_Vb_{p,\eps}(p)\geq -\eps$. 
\end{lemma}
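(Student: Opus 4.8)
The plan is to run the classical Cheeger--Gromoll barrier construction for the Busemann function, with the role of the ordinary Laplacian comparison played by Lemma~\ref{lem:LaplacianCompAlongGeo} and with $(V,m)$-completeness used to force the resulting comparison bound down to $0$. Fix a ray $\gamma$ and a point $p\in M$. First I would produce an \emph{asymptotic ray} $\sigma$ from $p$: for each $t>0$ choose a minimal unit speed geodesic $c_t:[0,d(p,\gamma_t)]\to M$ with $c_t(0)=p$ and $c_t(d(p,\gamma_t))=\gamma_t$; since $d(p,\gamma_t)\ge t-d(p,\gamma_0)\to+\infty$, one may pass to a subsequence along which $\dot c_t(0)\to v\in T_pM$ with $|v|=1$, and then $\sigma_s:=\exp_p(sv)$ is a ray with $\sigma_0=p$, because $c_t\to\sigma$ uniformly on compacta and $c_t|_{[0,s]}$ is minimizing once $d(p,\gamma_t)\ge s$.

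Next I would record the two standard Busemann facts. For every $x\in M$ the map $s\mapsto s-d(x,\sigma_s)$ is non-decreasing, so $t-d(x,\sigma_t)\le b_\sigma(x)$ for all $t>0$; and from $d(x,\gamma_t)\le d(x,\sigma_s)+d(\sigma_s,\gamma_t)$, the identity $d(c_t(s),\gamma_t)=d(p,\gamma_t)-s$, and $c_t(s)\to\sigma_s$, letting $t\to\infty$ and then $s\to\infty$ yields $b_\gamma\ge b_\gamma(p)+b_\sigma$ on $M$, with equality at $p$ (note $b_\sigma(p)=0$ since $\sigma$ is a ray from $p$). Hence it suffices to construct, for each $\eps>0$, a smooth lower support function for $b_\sigma$ at $p$ whose $\Delta_V$-value at $p$ is $\ge-\eps$; adding the constant $b_\gamma(p)$ then gives the required barrier $b_{p,\eps}$ for $b_\gamma$.

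For $t>0$ put $b_{\sigma,t}(x):=t-d(x,\sigma_t)$. Because $\sigma$ is a ray, $\sigma_t\notin{\rm Cut}(p)$, hence $p\notin{\rm Cut}(\sigma_t)\cup\{\sigma_t\}$, so the distance function $r_{\sigma_t}:=d(\cdot,\sigma_t)$ --- and thus $b_{\sigma,t}$ --- is smooth on a neighborhood of $p$; moreover $b_{\sigma,t}\le b_\sigma$ and $b_{\sigma,t}(p)=t-t=0=b_\sigma(p)$. Our hypothesis \eqref{eq:RiciLowBddStrong} with $\kappa=0$ means ${\rm Ric}_{m,n}(\Delta_V)\ge0$, so in particular \eqref{eq:RiciLowBdd} with $\kappa\equiv0$ holds with base point $\sigma_t$, and Lemma~\ref{lem:LaplacianCompAlongGeo} applied with base point $\sigma_t$ at the smooth point $p$ (writing $\tau_s:=\sigma_{t-s}$ for the reversed geodesic and using $r_{\sigma_t}(p)=t$) gives
\begin{align*}
(\Delta_V r_{\sigma_t})(p)\le \frac{n-m}{\exp\!\left(\frac{2V_\tau(t)}{n-m}\right)\int_0^t\exp\!\left(-\frac{2V_\tau(s)}{n-m}\right)\d s}=\frac{n-m}{\displaystyle\int_0^t e^{-\frac{2V_\sigma(w)}{n-m}}\d w},
\end{align*}
the last equality being the change of variables $w=t-s$ together with $V_\tau(t)-V_\tau(s)=-V_\sigma(t-s)$. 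Therefore $\Delta_V b_{\sigma,t}(p)=-(\Delta_V r_{\sigma_t})(p)\ge -(n-m)/\int_0^t e^{-2V_\sigma(w)/(n-m)}\d w$.

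Finally I would use $(V,m)$-completeness at $p$. For each $r>0$ the truncation $\sigma|_{[0,r]}$ is a minimizing unit speed geodesic of length $r$ issuing from $p$, so $\int_0^r e^{-2V_\sigma(w)/(n-m)}\d w\ge\inf_{L(\eta)=r}\int_0^r e^{-2V_\eta(w)/(n-m)}\d w$, and by Definition~\ref{df:phicompletness} the $\varlimsup_{r\to\infty}$ of the right-hand side is $+\infty$; since the integrand is positive, this forces $\int_0^\infty e^{-2V_\sigma(w)/(n-m)}\d w=+\infty$. Given $\eps>0$, choose $t$ with $\int_0^t e^{-2V_\sigma(w)/(n-m)}\d w\ge(n-m)/\eps$ and set $b_{p,\eps}:=b_\gamma(p)+b_{\sigma,t}$; then $b_{p,\eps}$ is smooth near $p$, $b_{p,\eps}(p)=b_\gamma(p)$, $b_{p,\eps}\le b_\gamma$, and $\Delta_V b_{p,\eps}(p)\ge-\eps$. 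The delicate points I expect are the passage from the $\varlimsup$-of-infima form of $(V,m)$-completeness to the plain divergence of $\int_0^\infty e^{-2V_\sigma/(n-m)}$ along the \emph{particular} asymptotic ray $\sigma$, and the verification that $\sigma$ indeed realizes $b_\gamma\ge b_\gamma(p)+b_\sigma$ with equality at $p$; the rest is bookkeeping around Lemma~\ref{lem:LaplacianCompAlongGeo}.
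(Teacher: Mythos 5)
Your proposal is correct and follows essentially the same route as the paper's proof: construct the asymptotic ray $\sigma$ from $p$, use $b_\gamma(p)+t-d(\cdot,\sigma_t)$ as the smooth lower support function, apply Lemma~\ref{lem:LaplacianCompAlongGeo} with base point $\sigma_t$ and reverse the geodesic to rewrite the bound as $-(n-m)/\int_0^t e^{-2V_\sigma(w)/(n-m)}\,\d w$, and let $(V,m)$-completeness drive this to $0$. Your two "delicate points" (the inequality $b_\gamma\ge b_\gamma(p)+b_\sigma$ and the passage from the $\varlimsup$-of-infima to divergence of the integral along $\sigma$, via monotonicity of the positive integral) are exactly the details the paper leaves implicit, and you handle them correctly.
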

\begin{proof}
Fix $p\in M$ and a ray $\gamma$ in $M$. Take any sequence $\{t_k\}$ 
satisfying $\lim_{k\to\infty}t_k=+\infty$. 
Let $\eta_{t_k}$ be a minimal $g$-geodesic joining $p$ and $\gamma_{t_k}$. 
As stated in \cite{Esc-Hein}, there exists a subsequence of $t_k$ such that the initial vector 
$\dot{\eta}_{t_k}(0)$ converges to some unit vector $u\in T_pM$. Let $\eta$ be the ray emanating from $p$ and generated by $u$. Then $p$ does not belong to the cut-locus of $\eta(r)$,  hence $\eta(r)\notin {\rm Cut}(p)$ for any $r>0$. So $b_{\gamma}^r(x):=r-d(x,\eta(r))+b_{\gamma}(p)$ is smooth around $p$ and satisfies $b_{\gamma}^r\leq b_{\gamma}$ with 
$b_{\gamma}^r(p)= b_{\gamma}(p)$. By \eqref{eq:LaplacianComparison0}, we see that for the unique  unit speed geodesic $\overline{\gamma}$ 
from $\eta(r)$ to $p$
\begin{align}
\Delta_Vb_{\gamma}^r(p)=-\Delta_Vr_{\eta(r)}(p)&\geq 
-\frac{n-m}{
\exp\left(\frac{2V_{\overline{\gamma}}(d(\eta(r),p))}{n-m} \right)
\int_0^{d(\eta(r),p)}
\exp\left(-\frac{2V_{\overline{\gamma}}(t)}{n-m}\right)\d t}.\label{eq:subharmbarrier}
\end{align}
Note that $\eta(r)=\overline{\gamma}_{d(p,\eta(r))-r}$ for 
$r\in[0,d(p,\eta(r))]$. 
Then \eqref{eq:subharmbarrier} becomes 
\begin{align}
\Delta_Vb_{\gamma}^r(p)=-\Delta_Vr_{\eta(r)}(p)\geq -\frac{n-m}{\int_0^{d(p,\eta(r))}\exp\left(-\frac{2V_{\eta}(u)}{n-m} \right)\d u}.\label{eq:subharmbarrier*}
\end{align}
Since $(M,g,V)$ is $(V,m)$-complete, we can construct the desired support function. 
\end{proof}

\begin{proof}[Proof of Theorem~\ref{thm:Splitting}]
Let $\gamma:]-\infty,+\infty[\to M$ be a line (i.e., $d(\gamma_t,\gamma_s)=|s-t|$ for 
$s,t\in\R$) and $\gamma^+,\gamma^{-}$ rays defined by $\gamma^+_t:=\gamma_t$ and $\gamma^{-}_t:=\gamma_{-t}$ ($t\geq0$). Let $b^+$, $b^-$ be the Busemann function 
associated to $\gamma^+$, $\gamma^-$, respectively. Then, under the $(V,m)$-completeness of $(M,g,V)$,   
 $b^+$ and $b^-$ 
are continuous $\Delta_V$-subharmonic functions on $M$ in the barrier sense by Lemma~\ref{lem:subharmonic}.  Since $\gamma$ is a line, for each $x\in M$, we have 
\begin{align*}
b^+(x)+b^-(x)=\lim_{t\to+\infty}(2t-d(x,\gamma_t)-d(x,\gamma_{-t}))\leq0
\end{align*}
 and $b^++b^-=0$ on $\gamma$. 
 In view of the strong maximum principle for 
 $\Delta_V$-subharmonic functions in the barrier sense (see \cite{Calabi:strongmax, Esc-Hein} and \cite[Lemma~2.4]{FanLiZhang}), 
 we have $b^++b^-=0$ on $M$. In particular, $b^+$ and $b^-$  are continuous 
 $\Delta_V$-harmonic functions in the barrier sense. Since $|\nabla r_p|=1$ on $({\rm Cut}(p)\cup \{p\})^c$, we have 
 $|\nabla b^+|=|\nabla b^-|=1$ on $M$.  
 Moreover, let $h^{\pm}$ be the smooth $\Delta_V$-harmonic function on an open ball $B$ such that $b^{\pm}=h^{\pm}$ on $\partial B$. Applying the weak  maximum principle to the $\Delta_V$-harmonic function $b^{\pm}-h^{\pm}$ on $B$ 
 in the barrier sense, we can  deduce $b^{\pm}\leq h^{\pm}$ on $B$, hence $0=b^++b^-\leq h^++h^-$. Applying the strong maximum principle again to the smooth $\Delta_V$-harmonic function $h^++h^-$ on $B$, we have $h^++h^-\equiv0$ on $B$.  
 Thus, we can get $0\geq b^+-h^+=-(b^--h^-)\geq0$ on $B$, hence $b^{\pm}=h^{\pm}$ on $B$. Therefore, $b^{\pm}$ is smooth on any ball $B$, hence on $M$. 
Applying \cite[Lemma~6.5]{Wylie:WarpedSplitting} to the 
 smooth $\Delta_V$-harmonic function $b_{\gamma^{\pm}}$ and $|\nabla b_{\gamma^{\pm}}|=1$ on $M$, we can deduce that  
 ${\rm Ric}_{1,n}(\Delta_V)(\nabla b_{\gamma^{\pm}},\nabla b_{\gamma^{\pm}})=0$ and $n-1$ non-zero eigenvalues of 
 ${\rm Hess}\,b^{\pm}|_p$ are all equal, because 
 ${\rm Hess}\,b^{\pm}|_p$ has $n-1$ non-zero eigenvalues. 
 Applying \cite[Lemma~6.6]{Wylie:WarpedSplitting} to the 
 smooth $\Delta_V$-harmonic function $b^{\pm}$ satisfying 
 $|\nabla b^{\pm}|=1$  
  together with the fact that  
${\rm CD}(0,m)$-condition implies ${\rm CD}(0,1)$-condition for $m<1$, we have that  
 $g$ has a twisted product of the form $g=dr^2+e^{\frac{\;2\phi\;}{n-1}}g_N$, 
where $g_N$ is a metric on $N$ and $\phi:M\to\R$ is a smooth function, ${\rm Ric}_{1,n}(\Delta_V)\left(\nabla b^{\pm},\nabla b^{\pm} \right)=0$, and $V=\frac{\partial \phi}{\partial r}\cdot\frac{\partial}{\partial r}+U$ with $U\perp \frac{\partial}{\partial r}$. 
 In the same way of the proof of \cite[Corollary~6.7]{Wylie:WarpedSplitting}, we can deduce that $\frac{d\phi}{dr}=0$, because \cite[Proposition~2.1]{Wylie:WarpedSplitting} yields ${\rm Ric}_{1,n}(\Delta_V)\left(\frac{\partial}{\partial r}, \frac{\partial}{\partial r}\right)=0$ and 
\begin{align*}
0\leq{\rm Ric}_{m,n}(\Delta_V)\left(\frac{\partial}{\partial r}, \frac{\partial}{\partial r}\right)&={\rm Ric}_{1,n}(\Delta_V)
\left(\frac{\partial}{\partial r}, \frac{\partial}{\partial r}\right)+\left(\frac{m-1}{(n-1)(n-m)} \right)\left(\frac{d\phi}{dr} \right)^2\\
&=\left(\frac{m-1}{(n-1)(n-m)} \right)\left(\frac{d\phi}{dr} \right)^2\leq0.
\end{align*}
This means that $g$ has the form of product metric  $g=dr^2+e^{\frac{2\phi(0,\cdot)}{n-1}}g_N=dr^2+h_N$ on $\R\times N$. 
Moreover, we can see that $V$ is a vector field on $N$ by using the fact that ${\rm Ric}_{m,n}(\Delta_V)\left(\frac{\partial}{\partial r},U \right)=0$ for all $U\perp \frac{\partial}{\partial r}$. 
\end{proof}


\providecommand{\bysame}{\leavevmode\hbox to3em{\hrulefill}\thinspace}
\providecommand{\MR}{\relax\ifhmode\unskip\space\fi MR }
\providecommand{\MRhref}[2]{%
  \href{http://www.ams.org/mathscinet-getitem?mr=#1}{#2}
}
\providecommand{\href}[2]{#2}

\end{document}